\newtheorem{thm}{Theorem}[section] 
\newtheorem{cor}[thm]{Corollary}
\newtheorem{prop}[thm]{Proposition}
\newtheorem{conj}[thm]{Conjecture}
\newtheorem{lem}[thm]{Lemma}
\theoremstyle{definition} 
\newtheorem{defn}[thm]{Definition}
\newtheorem{eg}[thm]{Example} 
\theoremstyle{remark}
\newtheorem{rem}[thm]{Remark}
\newtheorem*{ack}{Acknowledgements}
\title{Numerical Kodaira dimension of algebraic fiber spaces in positive characteristic}
\author{Sho Ejiri}
\address{Department of Mathematics, Graduate School of Science, Osaka Metropolitan University, Osaka City, Osaka 558-8585, Japan}
\email{shoejiri.math@gmail.com}
\begin{document}
\maketitle
\markboth{SHO EJIRI}{Numerical Kodaira dimension of algebraic fiber spaces in positive characteristic}
\begin{abstract}
In this paper, we prove a positive characteristic analog of 
Nakayama's inequality on the numerical Kodaira dimension of 
algebraic fiber spaces when the generic fibers have nef canonical divisors.  
To this end, we establish variants of Popa and Schnell's 
global generation theorem, Viehweg's weak positivity theorem 
and Fujino's global generation theorem in positive characteristic. 
As a byproduct, we show that Iitaka's conjecture holds true 
in positive characteristic when the base space is of general type 
and the canonical divisor of the total space is relatively semi-ample.
\end{abstract}
\section{Introduction}
We first recall Iitaka's conjecture on the Kodaira dimension of algebraic fiber spaces:
\begin{conj}[\textup{\cite{Iit72}}] \label{conj:Iitaka}
Let the base field be an algebraically closed field of characteristic zero. 
Let $f:X\to Y$ be a surjective morphism with connected fibers between 
smooth projective varieties. 
Then the inequality 
\begin{align}
\kappa(X) \ge \kappa(X_{\overline\eta}) + \kappa(Y)
\tag{I} \label{ineq:Iitaka}
\end{align}
holds, where $X_{\overline\eta}$ is the geometric generic fiber of $f$. 
\end{conj}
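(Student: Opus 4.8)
\smallskip
\noindent\emph{Proof proposal.}
Conjecture~\ref{conj:Iitaka} is open in general, so I can only outline the strategy that settles it in the cases relevant to this paper and indicate where it stops. The approach is Viehweg's: rephrase the inequality~\eqref{ineq:Iitaka} as a positivity property of the direct image sheaves $f_{*}\omega_{X/Y}^{\otimes m}$ and then split according to $\kappa(Y)$. After suitable birational modifications one may assume $f$ is a flat algebraic fiber space for which the sheaves $f_{*}\omega_{X/Y}^{\otimes m}$ behave well under base change; then, passing to the Iitaka fibration $Y\dashrightarrow Z$ (whose general fiber $Y_{z}$ has $\kappa(Y_{z})=0$ and whose base has $\dim Z=\kappa(Y)$), one would like to reduce the general case to the two extreme cases $\kappa(Y)=0$ and $\kappa(Y)=\dim Y$ by applying~\eqref{ineq:Iitaka} first to $f^{-1}(Y_{z})\to Y_{z}$ and then to $X\dashrightarrow Z$, using that the geometric generic fiber of $f^{-1}(Y_{z})\to Y_{z}$ is again $X_{\overline\eta}$ for general $z$ and that easy addition controls the rest (the subtle point being that $Z$ should be of general type).

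When $\kappa(Y)=\dim Y$, the engine is Viehweg's weak positivity theorem: $f_{*}\omega_{X/Y}^{\otimes m}$ is weakly positive over a dense open subset of $Y$ for every $m\ge 1$. Since $\omega_{Y}$ is big, $f_{*}\omega_{X}^{\otimes m}=f_{*}\omega_{X/Y}^{\otimes m}\otimes\omega_{Y}^{\otimes m}$ then acquires many sections in the base directions; to recover the fibral ones one passes to the fiber powers $X^{(s)}=X\times_{Y}\cdots\times_{Y}X$ and resolutions $X_{s}\to X^{(s)}$, for which $f_{s*}\omega_{X_{s}/Y}^{\otimes m}$ contains the $s$-fold tensor power of $f_{*}\omega_{X/Y}^{\otimes m}$. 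Since $\kappa\!\left(X^{(s)}_{\overline\eta}\right)=s\,\kappa(X_{\overline\eta})$ by additivity of the Kodaira dimension for products, combining weak positivity on the $X_{s}$ with the bigness of $\omega_{Y}$ and letting $s\to\infty$ yields $\kappa(X)\ge\kappa(X_{\overline\eta})+\dim Y=\kappa(X_{\overline\eta})+\kappa(Y)$.

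When $\kappa(Y)=0$, I would use Kawamata's theorem that the Albanese map $Y\to\operatorname{Alb}(Y)$ is then a surjective fiber space whose general fiber again has Kodaira dimension $0$, and reduce by induction on $\dim Y$ to the case that $Y$ is an abelian variety. Over an abelian variety the key fact is that every $f_{*}\omega_{X/Y}^{\otimes m}$ is a GV-sheaf; by Hacon's Fourier--Mukai characterization of GV-sheaves, together with a continuity argument over $\operatorname{Pic}^{0}(Y)$, this forces $\kappa(X)\ge\kappa(X_{\overline\eta})=\kappa(X_{\overline\eta})+\kappa(Y)$.

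The main obstacle, and the reason the conjecture is still open, is the intermediate range $0<\kappa(Y)<\dim Y$: the reduction above requires that the base of the Iitaka fibration be of general type, which is entangled with~\eqref{ineq:Iitaka} itself in lower dimension. In positive characteristic the situation is worse, since the GV-sheaf input and the Hodge-theoretic semipositivity of Hodge bundles both fail, so the $\kappa(Y)=0$ case collapses as well and even the general-type case needs a characteristic-$p$ replacement for Viehweg's weak positivity. This is precisely why the present paper does not attempt the full conjecture and instead proves it only when $Y$ is of general type and $K_{X}$ is $f$-semi-ample, the latter hypothesis compensating for the possible non-smoothness of $f$ and non-reducedness of its fibers.
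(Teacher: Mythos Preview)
The statement is Conjecture~\ref{conj:Iitaka}, and the paper does not prove it: it is recorded as an open conjecture and serves only as motivation for the positive-characteristic results that follow. There is therefore no ``paper's own proof'' to compare against. You recognize this at the outset, and what you have written is a survey of the known partial approaches (Viehweg's weak positivity for the general-type base, the Kawamata/Hacon analysis over abelian varieties, and the reduction via the Iitaka fibration) together with an honest account of where they stop. That is appropriate as context, but it is not a proof and does not claim to be one; no further assessment is possible.

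One minor correction to your last paragraph: the paper's positive-characteristic Iitaka-type result (Theorem~\ref{thm:relatively_semi-ample}) also assumes that the geometric generic fiber is $F$-pure, not only that $K_X$ is $f$-semi-ample and $Y$ is of general type. This hypothesis is what replaces the characteristic-zero semipositivity input, and omitting it misrepresents the scope of the result.
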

In \cite{Nak04}, Nakayama introduced the notion of numerical Kodaira dimension
$\kappa_\sigma$, and proved the following inequalities:
\begin{thm}[\textup{Special case of \cite[V.~4.1.~Theorem~(1)]{Nak04}, see also \cite[\S 3]{Fuj20}}]  \samepage
\label{thm:Nakayama_intro}
Let $f:X\to Y$ be as in Conjecture~\ref{conj:Iitaka}. 
Let $D$ be a $\mathbb Q$-divisor on $X$ such that $D-K_{X/Y}$ is nef. 
Then for every $\mathbb Q$-divisor $Q$ on $Y$, 
$$
\kappa_\sigma(D+f^*Q) 
\ge \kappa_\sigma\left(D|_{X_{\overline\eta}}\right) +\kappa(Q)
$$
and 
$$
\kappa_\sigma(D+f^*Q) 
\ge \kappa\left(D|_{X_{\overline\eta}}\right) +\kappa_\sigma(Q). 
$$
In particular, 
$$
\kappa_\sigma(X) \ge \kappa_\sigma\left(X_{\overline\eta}\right) +\kappa(Y)
$$
and 
$$
\kappa_\sigma(X) \ge \kappa\left(X_{\overline\eta}\right) +\kappa_\sigma(Y). 
$$
\end{thm}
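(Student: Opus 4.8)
\emph{Plan of proof.} I would derive both inequalities from the weak positivity of twisted relative pluricanonical sheaves together with a section-counting argument on $Y$, and then obtain the two ``in particular'' assertions by taking $D=K_{X/Y}$ and $Q=K_Y$. The key formal input I use is that, for a pseudo-effective $\mathbb{Q}$-divisor $L$ on a smooth projective variety, $\kappa_\sigma(L)\ge\sigma(L;A)$ for every ample $A$, where $\sigma(L;A)$ is the polynomial order of growth of $h^0(\lfloor mL\rfloor+A)$ in $m$; so to bound $\kappa_\sigma$ from below it suffices to exhibit one ample divisor realising the desired growth, which leaves me free to choose convenient ample classes on $X$ and on $Y$. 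I may assume $\kappa(Q)\ge0$ in the first inequality and that $Q$ is pseudo-effective in the second, since otherwise the right-hand side is $-\infty$, and I restrict $m$ to the multiples of a fixed integer $b$ clearing the denominators of $D$ and $Q$; write $P:=D-K_{X/Y}$, a nef $\mathbb{Q}$-divisor.

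\smallskip
\emph{Step 1: positivity of direct images.} Fix an ample divisor $H$ on $X$. For $m\in b\mathbb{Z}_{>0}$ the integral divisor $mD+H$ equals $mK_{X/Y}+(mP+H)$, so $\mathcal E_m:=f_*\mathcal O_X(mD+H)=f_*\bigl(\mathcal O_X(mK_{X/Y})\otimes\mathcal O_X(mP+H)\bigr)$ with $mP+H$ ample; by Viehweg's weak positivity theorem in the form of Popa and Schnell it is weakly positive over a dense open subset of $Y$, and the same holds for $f_*\mathcal O_X(mD)=f_*\bigl(\mathcal O_X(mK_{X/Y})\otimes\mathcal O_X(mP)\bigr)$, since $mP$ is nef. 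By cohomology and base change, $\operatorname{rk}\mathcal E_m=h^0\bigl(X_{\overline\eta},\,mD|_{X_{\overline\eta}}+H|_{X_{\overline\eta}}\bigr)$, and as $H|_{X_{\overline\eta}}$ is ample the definition of $\kappa_\sigma$ on the geometric generic fibre provides $c>0$ and arbitrarily large $m\in b\mathbb{Z}$ with $\operatorname{rk}\mathcal E_m\ge c\,m^{\kappa_\sigma(D|_{X_{\overline\eta}})}$; likewise $\operatorname{rk} f_*\mathcal O_X(mD)=h^0\bigl(X_{\overline\eta},\,mD|_{X_{\overline\eta}}\bigr)\ge c\,m^{\kappa(D|_{X_{\overline\eta}})}$ along a subsequence.

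\smallskip
\emph{Step 2: counting sections.} Fix an ample divisor $A_Y$ on $Y$; then $H+f^*A_Y$ is ample on $X$ and, by the projection formula and the Leray spectral sequence,
$$
h^0\bigl(X,\,m(D+f^*Q)+H+f^*A_Y\bigr)=h^0\bigl(Y,\,\mathcal E_m\otimes\mathcal O_Y(mQ+A_Y)\bigr)
$$
for $m\in b\mathbb{Z}$. The crux is a lower bound
$$
h^0\bigl(Y,\,\mathcal E_m\otimes\mathcal O_Y(mQ+A_Y)\bigr)\ge(\operatorname{rk}\mathcal E_m)\cdot h^0\bigl(Y,\,mQ+A_Y'\bigr)
$$
for a suitable ample $A_Y'$, obtained from weak positivity: twisting by an ample class produces a sheaf map $\mathcal O_Y^{\oplus\operatorname{rk}\mathcal E_m}\to\mathcal E_m\otimes(\text{ample})$ that is an isomorphism over a dense open set, hence injective, and one carries $\mathcal O_Y(mQ+A_Y')$ along. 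Combined with Step 1 and with $h^0(Y,mQ+A_Y')\ge c\,m^{\kappa(Q)}$ along a subsequence, this gives the first inequality; running the argument with $f_*\mathcal O_X(mD)$ in place of $\mathcal E_m$, and with $A_Y'$ chosen ample enough that $h^0(Y,mQ+A_Y')\ge c\,m^{\kappa_\sigma(Q)}$ along a subsequence, gives the second. Because $H+f^*A_Y$ is a single ample divisor on $X$, this bounds $\kappa_\sigma(D+f^*Q)$ below as claimed. Finally, taking $D=K_{X/Y}$, so that $D-K_{X/Y}=0$ is nef, and $Q=K_Y$, and using $K_{X/Y}|_{X_{\overline\eta}}=K_{X_{\overline\eta}}$, $\kappa(K_Y)=\kappa(Y)$ and $\kappa_\sigma(K_Y)=\kappa_\sigma(Y)$, yields the two ``in particular'' statements.

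\smallskip
\emph{Main obstacle.} I expect the real difficulty to lie in the second displayed inequality, which must hold \emph{uniformly in $m$}: since $\operatorname{rk}\mathcal E_m\to\infty$, the ample twist used to extract sections from weak positivity a priori grows with $m$ and threatens to swallow the contribution of $Q$. I would handle this along the lines of Viehweg's proof of the Iitaka conjecture over a base of general type, passing to the $s$-fold fibre product $X\times_Y\cdots\times_Y X$ (suitably resolved), for which the direct image becomes multiplicative and $K_{X/Y}$ additive --- the hypothesis that $D-K_{X/Y}$ is nef is preserved, as pullbacks of nef divisors are nef --- thereby reducing to a generic global generation statement needing only a twist linear in $m$, of the type supplied by the effective bounds of Popa and Schnell. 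To make the fibre contribution $\kappa(D|_{X_{\overline\eta}})$ visible in the second inequality one should first replace $f$ by the relative Iitaka fibration of $D$ over $Y$; and to make the base contribution $\kappa_\sigma(Q)$ visible one replaces $Q$ by the positive part $P_\sigma(Q)$ of its $\sigma$-decomposition (legitimate, since $N_\sigma(Q)\ge0$ and neither $\kappa_\sigma(Q)$ nor $D|_{X_{\overline\eta}}$ is affected) and passes to a birational model of $Y$ on which it is nef, so that $\kappa_\sigma(Q)=\nu(Q)$ becomes a genuinely numerical quantity whose growth in $h^0(Y,mQ+A_Y')$ is accessible. Coordinating all of these reductions at once is, I believe, the technical heart of the matter; this statement is essentially Nakayama's \cite{Nak04}, and \cite{Fuj20} streamlines the argument.
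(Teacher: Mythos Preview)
The paper does not actually prove this theorem; it is quoted from Nakayama's book as background motivation, and the paper's own contribution is the positive-characteristic analog (Theorem~\ref{thm:subadditivity}). So there is no ``paper's own proof'' of Theorem~\ref{thm:Nakayama_intro} to compare against directly. That said, the paper \emph{does} prove the analog, and comparing your sketch to that argument is instructive.

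Your overall architecture---turn positivity of $f_*\mathcal O_X(mD+H)$ into a generically injective map $\mathcal O_Y^{\oplus r_m}\hookrightarrow f_*\mathcal O_X(mD+A)$, twist by $\mathcal O_Y(mQ)$, and count sections---is exactly the structure of the paper's proof of Theorem~\ref{thm:subadditivity}. The divergence is in how the ``main obstacle'' you (correctly) identify is handled. You propose to control the $m$-dependence of the ample twist by Viehweg's fibre-product trick, together with passage to the relative Iitaka fibration and the $\sigma$-decomposition of $Q$; this is close in spirit to Nakayama's original route and is genuinely involved.

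The paper takes a different, shorter path: it first establishes a Fujino-type generic global generation theorem (Theorem~\ref{thm:relative2_generic}) asserting that there is a single ample twist $\omega_Y\otimes\mathcal L^{\dim Y}\otimes\mathcal H$ on $Y$, \emph{independent of $m$ and of the nef divisor $N$}, such that
\[
f_*\mathcal O_X\bigl(m(K_{X/Y}+\Delta)+A+N\bigr)\otimes\omega_Y\otimes\mathcal L^{\dim Y}\otimes\mathcal H
\]
is generically globally generated for every $m$. With this in hand your obstacle simply disappears: one sets $N=mD-m(K_{X/Y}+\Delta)$ and obtains the injection $\mathcal O_Y^{\oplus r_m}\hookrightarrow f_*\mathcal O_X(mD+A)$ with a \emph{fixed} $A$, and the section count proceeds exactly as you wrote, with no need for fibre products, Iitaka fibrations, or $\sigma$-decompositions. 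In characteristic zero this is precisely Fujino's simplification of Nakayama's argument (the reference \cite{Fuj20} in the statement), and the paper's technical work is to port that simplification to characteristic~$p$.

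So your proposal is not wrong, but it follows the harder, older road. The efficient strategy is to isolate the $m$-uniform generic global generation statement as a separate theorem and prove that first; once it is available, the deduction of the $\kappa_\sigma$-inequalities is the short computation you already have.
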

Here, $\kappa$ stands for the Iitaka dimension of divisors. 
The numerical Kodaira dimension is conjectured to be 
equal to the Kodaira dimension (cf. \cite[Conjecture~1.4]{Fuj17s}), 
so up to this conjecture, Theorem~\ref{thm:Nakayama_intro} solves 
affirmatively Conjecture~\ref{conj:Iitaka}. 

In this paper, we prove a positive characteristic analog of 
Theorem~\ref{thm:Nakayama_intro} when the geometric generic fiber has 
nef canonical divisor. 
From here on, let the base field be an $F$-finite field $k$ 
of characteristic $p>0$ (i.e. $[k:k^p]<\infty$). 
Let $f:X\to Y$ be a surjective morphism from a regular projective 
variety $X$ to a regular projective variety $Y$ of dimension $n$. 
Let $X_{\overline\eta}$ denote the geometric generic fiber of $f$. 
Then the main theorem of this paper can be stated as follows:
\begin{thm}[\textup{Theorem~\ref{thm:subadditivity}}] \samepage
\label{thm:subadditivity_intro}
Suppose that $X_{\overline\eta}$ is smooth and $K_{X_{\overline\eta}}$ is nef. 
Let $D$ be a $\mathbb Q$-divisor on $X$ such that $D-K_{X/Y}$ is nef. 
Then for every $\mathbb Q$-divisor $Q$ on $Y$, 
$$
\kappa_\sigma(D+f^*Q) \ge \kappa_\sigma\left(D|_{X_{\overline\eta}}\right) 
+\kappa(Q)
$$
and 
$$
\kappa_\sigma(D+f^*Q) \ge \kappa\left(D|_{\overline\eta}\right)
+\kappa_\sigma(Q). 
$$
\end{thm}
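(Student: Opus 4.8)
The plan is to mimic Nakayama's and Fujino's characteristic-zero arguments, replacing their use of weak positivity and of effective global generation by the positive-characteristic substitutes established in the paper, and to reduce everything to a growth estimate for spaces of sections on $Y$. \emph{Reductions.} Since $\kappa_\sigma$ and $\kappa$ are birational invariants unaffected by adding an ample divisor, one may replace $X,Y$ by higher regular models and shrink $Y$ over its smooth locus when convenient, re-extending the conclusion afterwards. If $\kappa(Q)=-\infty$ (resp.\ $Q$ is not pseudo-effective) the corresponding inequality is vacuous, so assume $Q$ is effective (resp.\ pseudo-effective), and likewise assume $\kappa(D|_{X_{\overline\eta}})\ge 0$ in the second inequality. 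Finally, by Nakayama's description of $\kappa_\sigma$ it suffices to exhibit one ample divisor $A$ on $X$, a constant $c>0$, and infinitely many $m$ with $h^0\bigl(X,\mathcal O_X(\lfloor m(D+f^*Q)\rfloor+A)\bigr)\ge c\,m^{r}$, where $r=\kappa_\sigma(D|_{X_{\overline\eta}})+\kappa(Q)$ for the first inequality and $r=\kappa(D|_{X_{\overline\eta}})+\kappa_\sigma(Q)$ for the second.

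\emph{Transfer to $Y$.} Fix an ample divisor $A'$ on $X$, an ample divisor $H'$ on $Y$ (take $H'=0$ for the first inequality), and an ample divisor $H$ on $Y$ to be specified below; set $A:=A'+f^*(H+H')$, which is ample, and $\mathcal F_m:=f_*\mathcal O_X(\lfloor mD\rfloor+A')$. For $m$ with $mQ$ integral one has $\lfloor m(D+f^*Q)\rfloor=\lfloor mD\rfloor+mf^*Q$, so the projection formula gives
$$h^0\bigl(X,\mathcal O_X(\lfloor m(D+f^*Q)\rfloor+A)\bigr)=h^0\bigl(Y,\mathcal F_m\otimes\mathcal O_Y(mQ+H+H')\bigr).$$
The generic rank of $\mathcal F_m$ equals $h^0\bigl(X_{\overline\eta},(mD+A')|_{X_{\overline\eta}}\bigr)$; since $(D-K_{X/Y})|_{X_{\overline\eta}}$ and $K_{X_{\overline\eta}}$ are nef and $A'|_{X_{\overline\eta}}$ is ample, this is $h^0$ of $mD|_{X_{\overline\eta}}$ twisted by an ample divisor, whence $\operatorname{rk}\mathcal F_m\ge c_1 m^{\kappa_\sigma(D|_{X_{\overline\eta}})}$ along a sequence of $m$, and also $\ge c_1 m^{\kappa(D|_{X_{\overline\eta}})}$ along a sequence, directly from the definitions of $\kappa_\sigma$ and $\kappa$.

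\emph{Positivity of $\mathcal F_m$.} Write $\lfloor mD\rfloor+A'=K_{X/Y}+L_m$ with $L_m=\lfloor mD\rfloor-K_{X/Y}+A'$; its restriction to a general fibre is numerically $m(D-K_{X_{\overline\eta}})+(m-1)K_{X_{\overline\eta}}+A'|_{X_{\overline\eta}}$, hence ample, so $L_m$ is $f$-ample, while the global positivity of $L_m$ is governed by the nef class $D-K_{X/Y}$. The positive-characteristic variant of Viehweg's weak positivity theorem — this is exactly where smoothness of $X_{\overline\eta}$ and nefness of $K_{X_{\overline\eta}}$ enter, since in characteristic $p$ the sheaves $f_*\mathcal O_X(\lfloor mD\rfloor)$ need not be weakly positive without such hypotheses — shows that $\mathcal F_m$ is weakly positive over $Y$. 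As $Q$ is pseudo-effective, $\mathcal F_m\otimes\mathcal O_Y(mQ)$ is then weakly positive as well, hence generically globally generated after twisting by a sufficiently positive ample class; by the positive-characteristic variants of the global generation theorems of Popa--Schnell and of Fujino, that ample twist can be taken to be a fixed divisor $H$ on $Y$, independent of $m$. Therefore, for $m$ sufficiently divisible, $\mathcal F_m\otimes\mathcal O_Y(mQ+H+H')$ contains a subsheaf isomorphic to $\mathcal O_Y(mQ+H')^{\oplus\operatorname{rk}\mathcal F_m}$, so
$$h^0\bigl(Y,\mathcal F_m\otimes\mathcal O_Y(mQ+H+H')\bigr)\ \ge\ \operatorname{rk}(\mathcal F_m)\cdot h^0\bigl(Y,\mathcal O_Y(mQ+H')\bigr).$$

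\emph{Conclusion, and the main obstacle.} Combining the two displayed inequalities with the rank estimates: along a sequence of sufficiently divisible $m$, with $H'=0$,
$$h^0\bigl(X,\mathcal O_X(\lfloor m(D+f^*Q)\rfloor+A)\bigr)\ \ge\ c_1 m^{\kappa_\sigma(D|_{X_{\overline\eta}})}\, h^0\bigl(Y,\mathcal O_Y(mQ)\bigr)\ \ge\ c_2\, m^{\kappa_\sigma(D|_{X_{\overline\eta}})+\kappa(Q)},$$
which yields the first inequality; using instead the $\kappa$-estimate for $\operatorname{rk}\mathcal F_m$ together with an ample $H'$ gives $h^0\ge c_1 m^{\kappa(D|_{X_{\overline\eta}})}h^0(Y,\mathcal O_Y(mQ+H'))\ge c_2 m^{\kappa(D|_{X_{\overline\eta}})+\kappa_\sigma(Q)}$ along a (possibly different) sequence, which yields the second. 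Beyond routine bookkeeping — selecting a common subsequence of $m$ for the two growing factors, handling the rounding of $\mathbb Q$-divisors, and the shrinking and re-extension over $Y$ — the substantive difficulty is the positivity step: the failure of Viehweg weak positivity in characteristic $p$ must be circumvented by exploiting $K_{X_{\overline\eta}}$ nef (and $X_{\overline\eta}$ smooth) through Frobenius and $F$-singularity arguments to extract just enough weak positivity of $\mathcal F_m$, and this must be paired with an effective global generation statement so that the ample divisor $H$ on $Y$ stays uniform in $m$. These are precisely the positive-characteristic analogues of the theorems of Viehweg, Popa--Schnell and Fujino developed in the paper, which the proof takes as input.
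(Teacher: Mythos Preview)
Your proposal is correct and follows the same route as the paper: apply the Fujino-type generic global generation theorem (Theorem~\ref{thm:relative2_generic}) to obtain $f_*\mathcal O_X(mD+A)$ generically globally generated for a \emph{fixed} ample $A$ independent of $m$, extract the generically isomorphic injection $\bigoplus^{r_m}\mathcal O_Y\hookrightarrow f_*\mathcal O_X(mD+A)$ with $r_m=h^0\bigl(X_{\overline\eta},(mD+A)|_{X_{\overline\eta}}\bigr)$, tensor by $\mathcal O_Y(mQ)$ (resp.\ $\mathcal O_Y(mQ+C)$ for an ample $C$), and compare growth rates via $\limsup$.

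Two small corrections to your write-up. First, the decomposition $\lfloor mD\rfloor+A'=K_{X/Y}+L_m$ does not feed into the paper's theorems: with $N:=L_m-A'=\lfloor mD\rfloor-K_{X/Y}$ one cannot verify ``$A'+N$ ample'' uniformly in $m$, since $(m-1)K_{X/Y}$ need not be nef. The paper instead writes $mD+A'=mK_{X/Y}+A'+N$ with $N:=m(D-K_{X/Y})$ \emph{nef}, so $A'+N$ is ample for every $m$, and applies Theorem~\ref{thm:relative2_generic} with the varying exponent $m$ on $K_{X/Y}$; the point of that theorem is precisely that the twist $\omega_Y\otimes\mathcal L^{\dim Y}\otimes\mathcal H$ is independent of both $m$ and $N$. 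Second, the intermediate claim that ``$\mathcal F_m\otimes\mathcal O_Y(mQ)$ is weakly positive'' is unneeded and not justified as stated (a merely pseudo-effective twist does not preserve weak positivity); the paper bypasses this by tensoring the injection $\bigoplus^{r_m}\mathcal O_Y\hookrightarrow f_*\mathcal O_X(mD+A)$ with $\mathcal O_Y(mQ)$ \emph{after} it has been obtained.
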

In the case when $X_{\overline\eta}$ has ``bad'' singularities, there exist counterexamples to Theorem~\ref{thm:subadditivity_intro} (see Example~\ref{eg:CEKZ}). 

Since the numerical dimension of a semi-ample divisor coincides with 
the Iitaka dimension, we obtain the following as a corollary of 
Theorem~\ref{thm:subadditivity_intro}: 
\begin{cor} \label{cor:Iitaka-semiample_intro}
Suppose that $X_{\overline\eta}$ is smooth and $K_X$ is semi-ample. 
Then inequality~$(\ref{ineq:Iitaka})$ holds. 
\end{cor}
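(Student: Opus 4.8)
The plan is to obtain this as an immediate consequence of Theorem~\ref{thm:subadditivity_intro}, applied with $D = K_{X/Y}$ and $Q = K_Y$. With these choices $D - K_{X/Y} = 0$ is nef, $D + f^*Q = K_X$, and $\kappa(Q) = \kappa(K_Y) = \kappa(Y)$, so it suffices to check the remaining hypothesis of the theorem and to identify the other two invariants appearing in its conclusion.

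First I would verify that $K_{X_{\overline\eta}}$ is nef. Since $X$ and $Y$ are regular, $\omega_{X/Y}$ is invertible, and because $f$ is flat over the generic point of $Y$ with geometrically smooth fibre there (namely $X_{\overline\eta}$), adjunction after base change to the geometric generic point gives $K_{X/Y}|_{X_{\overline\eta}} \sim K_{X_{\overline\eta}}$; as $f^*K_Y$ restricts to $0$ on $X_{\overline\eta}$, we also get $K_X|_{X_{\overline\eta}} \sim K_{X_{\overline\eta}}$. Since $K_X$ is semi-ample, so is its restriction $K_X|_{X_{\overline\eta}} \sim K_{X_{\overline\eta}}$, hence $K_{X_{\overline\eta}}$ is nef and Theorem~\ref{thm:subadditivity_intro} applies.

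Next I would pass from numerical to Iitaka dimensions using semi-ampleness. Because $K_X$ is semi-ample, $\kappa_\sigma(D + f^*Q) = \kappa_\sigma(K_X) = \kappa(K_X) = \kappa(X)$; and because $K_{X_{\overline\eta}}$ is semi-ample, $\kappa_\sigma\bigl(D|_{X_{\overline\eta}}\bigr) = \kappa_\sigma(K_{X_{\overline\eta}}) = \kappa(K_{X_{\overline\eta}}) = \kappa(X_{\overline\eta})$. Plugging these, together with $\kappa(Q) = \kappa(Y)$, into the first inequality of Theorem~\ref{thm:subadditivity_intro} yields
$$
\kappa(X) = \kappa_\sigma(D + f^*Q) \ge \kappa_\sigma\bigl(D|_{X_{\overline\eta}}\bigr) + \kappa(Q) = \kappa(X_{\overline\eta}) + \kappa(Y),
$$
which is exactly inequality~(\ref{ineq:Iitaka}). (The second inequality of Theorem~\ref{thm:subadditivity_intro} works equally well, and even gives the slightly stronger bound $\kappa(X) \ge \kappa(X_{\overline\eta}) + \kappa_\sigma(Y)$.)

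The argument is essentially formal once Theorem~\ref{thm:subadditivity_intro} is in hand; the only step needing attention is the behaviour of the canonical and relative canonical divisors under restriction to the geometric generic fibre — concretely, that $K_X|_{X_{\overline\eta}} \sim K_{X_{\overline\eta}}$ and that semi-ampleness descends to this restriction. In positive characteristic such restriction statements can be delicate, but the hypothesis that $X_{\overline\eta}$ is smooth removes the relevant pathologies, so I do not anticipate a genuine obstacle here.
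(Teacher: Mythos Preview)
Your proposal is correct and follows exactly the approach indicated in the paper: the corollary is stated immediately after the remark that ``the numerical dimension of a semi-ample divisor coincides with the Iitaka dimension,'' so it is derived from Theorem~\ref{thm:subadditivity_intro} by converting $\kappa_\sigma$ to $\kappa$ for the semi-ample divisors $K_X$ and $K_{X_{\overline\eta}}$. Your verification that $K_{X_{\overline\eta}}$ is nef (via restriction of the semi-ample $K_X$) is the only detail the paper leaves implicit, and you handle it correctly.
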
 
To prove Theorem~\ref{thm:subadditivity_intro}, we show variants of Popa and Schnell’s global generation theorem (\cite[Theorem 1.4]{PS14}, see also \cite[Theorem 1.3]{Eji22a}), Viehweg’s weak positivity theorem (\cite[Folgerung 3.4]{Vie82}) and Fujino’s global generation theorem (\cite[Theorem 1.7]{Fuj19}) in positive characteristic. As a byproduct of these variants, we obtain the following theorem:
\begin{thm}[\textup{Theorem~\ref{thm:relatively_semi-ample}}] \label{thm:relatively_semi-ample_intro}
Suppose that $X_{\overline\eta}$ is smooth, 
$K_X$ is relatively semi-ample over $Y$, and $Y$ is of general type. 
Then inequality (\ref{ineq:Iitaka}) holds true. 
\end{thm}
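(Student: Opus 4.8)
The plan is to adapt Viehweg's proof of the Iitaka conjecture over a base of general type to positive characteristic, feeding in the weak positivity theorem we establish here (the analogue of \cite[Folgerung~3.4]{Vie82}) in place of its characteristic-zero input, and using the relative semi-ampleness of $K_X$ to substitute for the finiteness and generation statements about relative canonical rings that are unavailable in characteristic $p$. First, since $K_{X/Y}=K_X-f^{*}K_Y$ differs from $K_X$ by the pullback of a divisor, it is again $f$-semi-ample, so $K_{X_{\overline\eta}}$ is semi-ample, in particular nef; thus the hypotheses of our weak positivity theorem (and of Theorem~\ref{thm:subadditivity}) hold for $f$. If $\kappa(X_{\overline\eta})=-\infty$, then $f_{*}\mathcal{O}_X(mK_{X/Y})=0$ for all $m\ge 1$, hence $f_{*}\mathcal{O}_X(mK_X)\cong f_{*}\mathcal{O}_X(mK_{X/Y})\otimes\mathcal{O}_Y(mK_Y)=0$ and $\kappa(X)=-\infty$, so we may assume $\kappa:=\kappa(X_{\overline\eta})\ge 0$. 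Fix an ample divisor $A$ on $Y$, and then fix $m_0>0$ divisible and large enough that $m_0K_{X/Y}$ is $f$-globally generated, that the multiplication maps $S^{j}H^{0}(X_{\overline\eta},m_0K_{X_{\overline\eta}})\to H^{0}(X_{\overline\eta},jm_0K_{X_{\overline\eta}})$ are surjective for all $j\ge 1$ (possible since $K_{X_{\overline\eta}}$ is semi-ample), that $h^{0}(X_{\overline\eta},jm_0K_{X_{\overline\eta}})$ agrees for $j\gg 0$ with a polynomial $P(j)$ of degree $\kappa$ and positive leading term, and that $m_0K_Y\sim_{\mathbb Q}A+B+E$ with $B$ ample and $E$ effective (using that $K_Y$ is big). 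Set $\mathcal{E}_m:=f_{*}\mathcal{O}_X(mK_{X/Y})$; then the multiplication maps $S^{j}\mathcal{E}_{m_0}\to\mathcal{E}_{jm_0}$ are surjective over a dense open of $Y$, and $\operatorname{rk}\mathcal{E}_{jm_0}=P(j)$ for $j\gg 0$.

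By our weak positivity theorem, $\mathcal{E}_{m_0}$ is weakly positive, so there is $b_0>0$ with $S^{b_0}\mathcal{E}_{m_0}\otimes\mathcal{O}_Y(b_0A)$ generically globally generated. Since the Veronese subalgebra $\bigoplus_{\ell\ge 0}S^{\ell b_0}$ of a symmetric algebra is generated in degree one, taking symmetric powers shows that $S^{\ell b_0}\mathcal{E}_{m_0}\otimes\mathcal{O}_Y(\ell b_0A)$ is generically globally generated for every $\ell\ge 1$; composing with the generic surjection $S^{\ell b_0}\mathcal{E}_{m_0}\twoheadrightarrow\mathcal{E}_{\ell b_0m_0}$ shows that $\mathcal{E}_{jm_0}\otimes\mathcal{O}_Y(jA)$ is generically globally generated whenever $b_0\mid j$. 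Choosing $P(j)$ sections forming a basis at a general point of $Y$ yields, for all such $j$, an injection $\mathcal{O}_Y^{\oplus P(j)}\hookrightarrow\mathcal{E}_{jm_0}\otimes\mathcal{O}_Y(jA)$.

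Twisting this injection by $\mathcal{O}_Y(jB)$ and applying the projection formula gives, for $b_0\mid j$,
\[
h^{0}(X,jm_0K_X)=h^{0}\bigl(Y,\mathcal{E}_{jm_0}\otimes\mathcal{O}_Y(jm_0K_Y)\bigr)\ge h^{0}\bigl(Y,\mathcal{E}_{jm_0}\otimes\mathcal{O}_Y(jA+jB)\bigr)\ge P(j)\cdot h^{0}\bigl(Y,\mathcal{O}_Y(jB)\bigr),
\]
where the first inequality uses that $jm_0K_Y-jA-jB\sim_{\mathbb Q}jE$ is effective. Since $P(j)\sim c\,j^{\kappa}$ with $c>0$ and $h^{0}(Y,\mathcal{O}_Y(jB))\sim(B^{n}/n!)\,j^{n}$, the right-hand side grows like $j^{\,n+\kappa}$ along multiples of $b_0$, so $\kappa(X)=\kappa(X,K_X)\ge n+\kappa=\kappa(Y)+\kappa(X_{\overline\eta})$, which is inequality~(\ref{ineq:Iitaka}). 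For comparison, the weaker numerical statement $\kappa_\sigma(X)\ge\kappa(X_{\overline\eta})+\kappa(Y)$ is immediate from Theorem~\ref{thm:subadditivity} with $D=K_{X/Y}$ and $Q=K_Y$; the substance of the present theorem is the upgrade from $\kappa_\sigma$ to $\kappa$, which is exactly what relative semi-ampleness together with weak positivity supplies.

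The main difficulty is to convert the purely asymptotic content of the weak positivity of $\mathcal{E}_{m_0}$ into a lower bound for $h^{0}(X,jm_0K_X)$ of the maximal order $j^{\,n+\kappa}$. This is what forces the passage to an arithmetic progression of exponents via the Veronese trick (so that weak positivity yields honest generic global generation after the correct symmetric power) together with the use of the relative semi-ampleness of $K_X$ to guarantee that these symmetric powers actually surject onto the sheaves $\mathcal{E}_{jm_0}$, whose ranks grow only like $j^{\kappa}$; without relative semi-ampleness one controls merely $S^{j}\mathcal{E}_{m_0}$, of far larger rank, and the estimate collapses. Proving the weak positivity of $\mathcal{E}_{m_0}$ itself in characteristic $p$—which must use both the smoothness of $X_{\overline\eta}$ and the nefness of $K_{X_{\overline\eta}}$, in view of Example~\ref{eg:CEKZ}—is the genuine technical heart, but it is carried out separately as one of the variants advertised above.
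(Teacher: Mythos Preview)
Your argument has a genuine gap: you apply ``our weak positivity theorem'' to $\mathcal{E}_{m_0}=f_*\mathcal{O}_X(m_0K_{X/Y})$, but the weak positivity theorem established in this paper (Theorem~\ref{thm:relative_intro}/\ref{thm:relative_generic}) does \emph{not} say that $f_*\mathcal{O}_X(mK_{X/Y})$ is weakly positive. It only says that $f_*\mathcal{O}_X(mK_{X/Y}+A+N)$ is weakly positive, where $A$ is a \emph{sufficiently ample divisor on $X$}, not on $Y$. This extra ample twist on the total space is not removable by the methods of the paper, and it breaks your chain of implications: if you replace $\mathcal{E}_{m_0}$ by $f_*\mathcal{O}_X(m_0K_{X/Y}+A)$, then the multiplication maps land in $f_*\mathcal{O}_X(jm_0K_{X/Y}+jA)$, whose generic rank grows like $j^{\dim X_\eta}$ rather than $j^{\kappa}$, and in any case you would only bound $h^0(X,jm_0K_X+jA)$ from below, not $h^0(X,jm_0K_X)$.

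The paper's proof circumvents this by using Theorem~\ref{thm:relative2_generic} instead of the weak positivity theorem: that result gives generic global generation of $f_*\mathcal{O}_X(mK_{X/Y}+A')\otimes\omega_Y\otimes\mathcal{L}^{n}\otimes\mathcal{H}$ with a twist that is \emph{independent of $m$}. The fixed ample $A'$ on $X$ is then absorbed using the relative Iitaka fibration $g:X\to Z$ coming from the relative semi-ampleness: one chooses a nonzero section of $(g_*\mathcal{O}_X(A))^{\vee}\otimes\mathcal{O}_Z(m_0(D+h^*H))$ to trade $A$ for a bounded multiple of the $h$-ample divisor $D$ plus a fixed twist from $Y$. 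This is precisely the step you are missing; without it, the ample $A$ on $X$ cannot be eliminated and the Viehweg-style section count does not go through.
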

Let us move on to the explanation on the variants mentioned above. 
We first prove a variant of Popa and Schnell’s global generation theorem in positive characteristic:
\begin{thm}[\textup{Theorem~\ref{thm:PS-type_generic}}]  \samepage
\label{thm:PS-type_generic_intro}
Let $A$ be a sufficiently ample divisor on $X$. 
Let $\mathcal L$ be an ample line bundle on $Y$ 
and let $j$ be the smallest positive integer such that $|\mathcal L^j|$ is free. 
Suppose that $K_{X_{\overline\eta}}$ is nef. 
Then the sheaf 
$$
f_*\mathcal O_X(mK_X+A+N) \otimes \mathcal L^l
$$
is generically generated by its global sections 
for each $m\ge 1$, $l\ge m(jn+1)$ and every nef divisor $N$ on $X$. 
\end{thm}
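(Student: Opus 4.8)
The plan is to carry out the positive–characteristic version of Popa and Schnell's argument, replacing Kodaira--Kawamata--Viehweg vanishing by the formalism of Frobenius-stable sections $S^0f_*$ and deducing global generation from Castelnuovo--Mumford regularity, which is valid in any characteristic. Accordingly the proof will split into three parts: (a) identify $f_*\mathcal{O}_X(mK_X+A+N)$ with its Frobenius-stable subsheaf $S^0f_*\mathcal{O}_X(mK_X+A+N)$ over a dense open of $Y$; (b) show that $S^0f_*\mathcal{O}_X(mK_X+A+N)\otimes\mathcal{L}^l$ is globally generated on all of $Y$ once $l\ge m(jn+1)$; and (c) conclude, since the evaluation map of the sheaf in (b) factors through $S^0f_*$, so that $H^0(Y,S^0f_*\mathcal{O}_X(mK_X+A+N)\otimes\mathcal{L}^l)$ already surjects onto $f_*\mathcal{O}_X(mK_X+A+N)\otimes\mathcal{L}^l$ over the open set produced in (a).

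For (a), first shrink $Y$ to a dense open $U$ over which $f$ is flat with smooth, geometrically connected fibres $X_y$ such that $K_{X_y}$ is nef; this is possible because smoothness of $X_{\overline\eta}$ and nefness of $K_{X_{\overline\eta}}$ both spread out. Since $A$ is sufficiently ample we may assume relative Fujita vanishing, $R^if_*\mathcal{O}_X(K_X+A+P)=0$ for $i>0$ and every nef divisor $P$ on $X$, and that $A-K_X$ is as positive as we wish; then for all $m\ge1$ and all nef $N$ the divisor $(m-1)K_X+A+N$ is $f$-ample over $U$ and restricts on each $X_y$ to a divisor that is sufficiently ample relative to $K_{X_y}$, uniformly in $m$, $N$ and $y\in U$ — here the nefness of $K_{X_y}$ and the hypothesis $m\ge1$ are used. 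A Fujita-type statement for Frobenius-stable sections then gives $S^0(X_y,\mathcal{O}_{X_y}(mK_{X_y}+A|_{X_y}+N|_{X_y}))=H^0(X_y,\mathcal{O}_{X_y}(mK_{X_y}+A|_{X_y}+N|_{X_y}))$ for $y\in U$, and, since $S^0f_*$ is compatible (after shrinking $U$ if necessary) with passage to a general fibre, the inclusion $S^0f_*\mathcal{O}_X(mK_X+A+N)\hookrightarrow f_*\mathcal{O}_X(mK_X+A+N)$ is an isomorphism over $U$, which is exactly what (c) requires.

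For (b), use the projection formula for $S^0f_*$ with respect to $f^*\mathcal{L}$ to write $S^0f_*\mathcal{O}_X(mK_X+A+N)\otimes\mathcal{L}^l=S^0f_*\mathcal{O}_X(K_X+D_l)$ with $D_l=(m-1)K_X+A+N+f^*\mathcal{L}^l$. As $\mathcal{L}^j$ is ample and $|\mathcal{L}^j|$ is free, Mumford's regularity criterion reduces global generation to the vanishing $H^i(Y,S^0f_*\mathcal{O}_X(K_X+D_{l-ij}))=0$ for $1\le i\le n$, where $l-ij\ge m(jn+1)-nj\ge m\ge1$. Over $U$ the divisor $(m-1)K_X+N$ is $f$-nef, while the remaining part $A+f^*\mathcal{L}^{\,l-ij}$ carries, since $A$ is sufficiently ample, enough positivity both on $X$ and along $f$ for such higher cohomology to vanish; concretely, this vanishing will be extracted, via the Leray spectral sequence, from our positive-characteristic analogue of Viehweg's weak positivity theorem for $S^0f_*$ of pluricanonical-type sheaves together with the attendant Kollár--Nakayama-type vanishing. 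The bound $l\ge m(jn+1)$ is precisely what is forced by this bookkeeping: the $n$ regularity conditions each consume one copy of $\mathcal{L}^j$ (it is $\mathcal{L}^j$, not $\mathcal{L}$, that is free), Viehweg's argument costs one more copy of $\mathcal{L}$, and the whole count is multiplied by $m$ because passing from the $m$-canonical twist to the ordinary canonical one — whether through a cyclic cover or, avoiding tameness issues, through the Frobenius trace $F^e_*\omega_X\twoheadrightarrow\omega_X$ — scales the required power of $\mathcal{L}$ by a factor of roughly $m$.

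I expect the main obstacle to be the absence of vanishing theorems in characteristic $p$: every appeal to Kodaira or Kawamata--Viehweg vanishing in the classical argument — both the identification $S^0=H^0$ on fibres in (a) and the cohomology vanishing in (b) — must be replaced by an argument with Frobenius-stable sections, and these work exactly because $(m-1)K_X+A+N$ restricts to an ample divisor on the general fibre, i.e.\ because $K_{X_{\overline\eta}}$, hence $K_{X_y}$ for general $y$, is nef. (If $X_{\overline\eta}$ has bad singularities this breaks down, in accordance with Example~\ref{eg:CEKZ}.) Thus the technical heart of the proof is to establish the positive-characteristic weak positivity and vanishing statements for $S^0f_*\mathcal{O}_X(mK_X+A+N)$ uniformly in $m\ge1$ and in the nef divisor $N$; granting those, parts (a)--(c) fit together as described.
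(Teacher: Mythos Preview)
Your plan has a genuine gap in part~(b). You propose to obtain global generation of $S^0f_*\mathcal O_X(mK_X+A+N)\otimes\mathcal L^l$ by Castelnuovo--Mumford regularity on $Y$, which requires the vanishing $H^i\big(Y,\,S^0f_*\mathcal O_X(K_X+D_{l-ij})\big)=0$ for $1\le i\le n$. You then say this vanishing ``will be extracted, via the Leray spectral sequence, from our positive-characteristic analogue of Viehweg's weak positivity theorem \dots\ together with the attendant Koll\'ar--Nakayama-type vanishing.'' Neither ingredient is available here. First, weak positivity of a sheaf does not imply vanishing of its higher cohomology after twisting by a fixed power of $\mathcal L$; moreover, in this paper the weak positivity statement (Theorem~\ref{thm:relative_generic}) is \emph{deduced from} the very theorem you are trying to prove, so invoking it would be circular. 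Second, there is no Koll\'ar-type vanishing for $R^if_*$ or for $H^i(Y,f_*\omega_X(\cdot))$ in positive characteristic that would feed into a Leray argument. Your bookkeeping for the bound $l\ge m(jn+1)$ (``Viehweg's argument costs one more copy of $\mathcal L$, and the whole count is multiplied by $m$'') is therefore not supported by an actual mechanism.

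The paper's argument avoids any direct cohomology vanishing on $Y$. The two ideas you are missing are: (i) a self-improving estimate on the minimal twist. Using the trace map $\phi^{(e)}_{(X,\Delta)}$ and relative Castelnuovo--Mumford regularity on the fibres, one constructs for each $e\gg0$ a morphism
\[
\bigoplus {F_Y^e}_* S^{q_e}\big(f_*\mathcal O_X(mK+A+N)\big)\longrightarrow f_*\mathcal O_X(mK+A+N)
\]
that is generically surjective, where $q_e\approx\frac{m-1}{m}p^e$. If $s$ is the least integer with $f_*\mathcal O_X(mK+A+N)\otimes\mathcal L^s$ generically globally generated, twisting by $\mathcal L^l$ reduces the problem to the generic global generation of ${F_Y^e}_*\mathcal O_Y((p^el-q_es)L)$. (ii) Castelnuovo--Mumford regularity is applied not to the sheaf itself but to its Frobenius pushforward (Lemma~\ref{lem:Frobenius glgen}): since $(p^el-q_es)/p^e\to l-\frac{m-1}{m}s$, Serre vanishing on $\bar Y$ gives global generation as soon as $l>\frac{m-1}{m}s+jn$. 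Feeding this back into the definition of $s$ yields $s\le\frac{m-1}{m}s+jn+1$, i.e.\ $s\le m(jn+1)$, which is exactly the bound. Your part~(a), identifying $S^0f_*$ with $f_*$ generically, is essentially the generic surjectivity of the trace map (condition~(2) in the proof) and is fine; it is part~(b) that needs to be replaced by this Frobenius-amplified bootstrap.
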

Here, ``sufficiently ample divisor'' is, for example, $A=iA'$ for 
an integer $i$ large enough and an ample divisor $A'$ on $X$. 

In characteristic zero, Theorem~\ref{thm:PS-type_generic_intro} 
follows from the log version of 
Popa and Schnell's global generation theorem~(\cite[Variant~1.6]{PS14}). 
Indeed, take a smooth member $H\in|A+N|$ and put $\Delta:=m^{-1}H$. 
Then 
$$
f_*\mathcal O_X(mK_X+A+N) \otimes \mathcal L^l
\cong f_*\mathcal O_X(m(K_X+\Delta)) \otimes \mathcal L^l
$$
is generated by its global sections for $l\ge m(n+1)$ (when $j=1$). 
 
In positive characteristic, however, we cannot apply the same argument as above, 
though an analog of the log version of Popa and Schnell's 
global generation theorem has been established 
(\cite[Theorem~6.11]{Eji19d}, see also \cite[Theorem~1.5]{Eji22a}). 
This is because $m$ in \cite[Theorem~6.11]{Eji19d} is required to be 
larger than $m_0=m_0(f,\Delta)$ that depends on the morphism $f$ and the boundary divisor $\Delta$. 

In the case when the canonical divisor of the total space is relatively nef, 
we have the following theorem: 
\begin{thm}[\textup{Theorem~\ref{thm:PS-type}}] \label{thm:PS-type_intro} \samepage
Let $A$, $\mathcal L$ and $j$ be as in Theorem~\ref{thm:PS-type_generic_intro}. 
Suppose that $K_X$ is $f$-nef. 
Then the sheaf 
$$
f_*\mathcal O_X(mK_X+A+N) \otimes \mathcal L^l
$$
is generated by its global sections for each $m\ge 1$, $l\ge m(jn+1)$ and 
every nef divisor $N$ on $X$. 
\end{thm}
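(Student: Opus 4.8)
The plan is to follow the strategy behind Theorem~\ref{thm:PS-type_generic_intro}, replacing each ``over a dense open subset of $Y$'' by ``over all of $Y$'' by means of the relative nefness of $K_X$, and to organize the argument through Castelnuovo--Mumford regularity. Put $\mathcal F:=f_*\mathcal O_X(mK_X+A+N)\otimes\mathcal L^l$. Since $\mathcal L^j$ is globally generated and ample, the standard fact that a $0$-regular sheaf (with respect to a globally generated ample line bundle) is globally generated reduces the claim to the $0$-regularity of $\mathcal F$ with respect to $\mathcal L^j$, that is, to the vanishing
\[
H^i\!\left(Y,\ f_*\mathcal O_X(mK_X+A+N)\otimes\mathcal L^{\,l-ij}\right)=0\qquad(1\le i\le n);
\]
in this range $l-ij\ge m(jn+1)-nj=(m-1)jn+m\ge m\ge 1$, so every twist that appears is still positive enough to hope for the vanishing.

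To prove it I would first observe that, because $K_X$ is $f$-nef and $A$ is sufficiently ample, the divisor $mK_X+A+N$ is $f$-ample for all $m\ge 1$ and all nef $N$ --- being the sum of the $f$-nef divisors $mK_X$, $N$ with the globally ample divisor $A$ --- so, by the very choice of ``sufficiently ample'', $f_*\mathcal O_X(mK_X+A+N)$ is locally free with formation commuting with arbitrary base change. Next I would bring in the Frobenius trace: since $X$ is regular, the trace map $F^e_*\mathcal O_X\!\left((1-p^e)K_X\right)\to\mathcal O_X$ is surjective, and twisting it by $mK_X+A+N+l'f^*\mathcal L$ (with $l'=l-ij$) yields a surjection
\[
F^e_*\mathcal O_X\!\left(K_X+p^eC\right)\twoheadrightarrow\mathcal O_X\!\left(mK_X+A+N+l'f^*\mathcal L\right),\qquad C:=(m-1)K_X+A+N+l'f^*\mathcal L,
\]
where $C$ is again $f$-ample. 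Pushing this down along $f$ --- using $f\circ F^e_X=F^e_Y\circ f$ and the affineness of the Frobenius, and noting that $R^{>0}f_*\mathcal O_X(K_X+p^eC)=0$ for $e\gg 0$ by relative Serre vanishing --- produces, for $e\gg 0$, a surjection
\[
F^e_{Y*}\!\left(f_*\mathcal O_X(K_X+p^eC)\right)\twoheadrightarrow f_*\mathcal O_X(mK_X+A+N)\otimes\mathcal L^{\,l'};
\]
since the cohomology of $Y$ is insensitive to the Frobenius on $Y$, the required vanishing is thereby reduced to a vanishing statement for the ``relatively very ample'' sheaf $f_*\mathcal O_X(K_X+p^eC)$ (together with control of the kernel of this surjection, obtained by iterating the trace). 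This last vanishing is exactly where the positive-characteristic analogues of Viehweg's weak positivity theorem and Fujino's global generation theorem developed in the paper --- the substitutes for Kollár-type vanishing --- enter, in combination with the generic global generation of Theorem~\ref{thm:PS-type_generic_intro}, which applies because $K_X$ being $f$-nef forces $K_{X_{\overline\eta}}$ to be nef.

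The principal obstacle is precisely this last vanishing. Positive characteristic offers no Kollár- or Kodaira-type vanishing, so one cannot simply assert that $R^{>0}f_*\mathcal O_X(mK_X+A+N)=0$ or that higher cohomology of ample line bundles on $X$ vanishes; the argument must instead route through Frobenius-stable sections and the relative-Serre-vanishing mechanism above. It is here that the full hypothesis that $K_X$ is $f$-\emph{nef} --- and not merely that $K_{X_{\overline\eta}}$ is nef, as in Theorem~\ref{thm:PS-type_generic_intro} --- becomes indispensable: it is what makes $mK_X+A+N$, and the auxiliary divisor $C$, genuinely $f$-ample, so that the relevant higher direct images vanish uniformly in $m$, $N$ and $l'$ and the generic generation can be propagated over all of $Y$, eliminating the ``bad locus'' that obstructs global generation in the generic statement. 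A secondary technical point is to ensure that the exponent $e$ one is forced to take --- which depends on $m$, $N$ and $l'$ --- never degrades the bound $l\ge m(jn+1)$; this is handled by separating off the fixed $f$-ample part $A$ and treating the Frobenius-scaled $f$-nef contribution $p^e\!\left((m-1)K_X+N+l'f^*\mathcal L\right)$ uniformly.
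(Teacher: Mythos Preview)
Your plan has a genuine gap and takes a route that, in the paper's logical order, would be circular.

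First, the circularity. You propose to close the key vanishing step by invoking ``the positive-characteristic analogues of Viehweg's weak positivity theorem and Fujino's global generation theorem developed in the paper''. But in the paper those results (Theorems~\ref{thm:relative} and~\ref{thm:relative2}) are \emph{proved using} Theorem~\ref{thm:PS-type}: the proof of Theorem~\ref{thm:relative} applies Theorem~\ref{thm:PS-type} to the base-changed morphism $f_{Y^d}\colon X_{Y^d}\to Y^d$, and Theorem~\ref{thm:relative2} in turn uses Theorem~\ref{thm:relative}. So you cannot feed those back into the proof of Theorem~\ref{thm:PS-type}. Likewise, Theorem~\ref{thm:PS-type_generic_intro} is the \emph{generic} version proved by the same method; it does not serve as a genuine input here.

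Second, even setting circularity aside, the Castelnuovo--Mumford scheme you outline does not naturally produce the bound $l\ge m(jn+1)$. Showing $0$-regularity of $\mathcal F$ with respect to $\mathcal L^j$ amounts to vanishing of $H^i$ at twists $l-ij$; but you give no mechanism by which the vanishing becomes true precisely when $l-ij\ge(m-1)jn+m$. The Frobenius trace surjection you write down only trades one unknown vanishing for another on $F_{Y*}^e\bigl(f_*\mathcal O_X(K_X+p^eC)\bigr)$, and controlling the kernel ``by iterating the trace'' is not a substitute for a vanishing theorem.

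The paper's proof is organized quite differently and avoids any higher-cohomology vanishing on $Y$. It is a \emph{self-improvement} argument: one lets $s$ be the smallest integer such that $f_*\mathcal O_X(mK+A+N)\otimes\mathcal L^s$ is globally generated (over $V$), and shows $s\le m(jn+1)$. Writing $(m-1)p^e+1=mq_e+r_e$, relative $0$-regularity on the fibers (Lemma~\ref{lem:rel free} and the relative Castelnuovo--Mumford lemma) gives a surjection
\[
S^{q_e}\bigl(f_*\mathcal O_X(mK+A+N)\bigr)\otimes f_*\mathcal O_X\bigl(r_eK+(p^e-q_e)(A+N)\bigr)\twoheadrightarrow f_*\mathcal O_X\bigl(((m-1)p^e+1)K+p^e(A+N)\bigr),
\]
and the pushed-forward trace $f_*\phi^{(e)}_{(X,\Delta)}$ maps the right side onto $f_*\mathcal O_X(mK+A+N)$. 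After twisting by $\mathcal L^{p^el}$, the $S^{q_e}$ factor is globally generated once one twists by $\mathcal L^{q_es}$ (by definition of $s$), and Lemma~\ref{lem:Frobenius glgen} handles the remaining ${F_Y^e}_*\mathcal O_Y\bigl((p^el-q_es)L\bigr)$ provided $l>\tfrac{m-1}{m}s+jn$. Feeding this back into the minimality of $s$ yields $s\le\tfrac{m-1}{m}s+jn+1$, hence $s\le m(jn+1)$. No Koll\'ar- or Kodaira-type vanishing, and no appeal to weak positivity, is needed.
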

Using Theorem~\ref{thm:PS-type_generic_intro}, we prove
a variant of Viehweg's weak positivity theorem in positive characteristic: 
\begin{thm}[\textup{Theorem~\ref{thm:relative_generic}}]  \samepage
\label{thm:relative_intro}
Let $A$ be a sufficiently ample divisor on $X$. 
Suppose that $X_{\overline\eta}$ is smooth and $K_{X_{\overline\eta}}$ is nef. 
Then for each $m\ge 1$ and every nef divisor $N$ on $X$, the sheaf 
$$
f_*\mathcal O_X(mK_{X/Y}+A+N)
$$
is weakly positive in the sense of \cite{Vie83}. 
\end{thm}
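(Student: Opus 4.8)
The plan is to run Viehweg's fibre-product method, using Theorem~\ref{thm:PS-type_generic_intro} at the point where one would classically invoke a relative base-point-free or Popa--Schnell-type statement. Fix $m\ge 1$ and a nef divisor $N$ on $X$, put $\mathcal F:=f_*\mathcal O_X(mK_{X/Y}+A+N)$, and let $U\subseteq Y$ be a non-empty open subset over which $f$ is smooth (non-empty since $X_{\overline\eta}$ is smooth) and $\mathcal F$ is locally free. By Viehweg's criterion it is enough to exhibit a single line bundle $\mathcal G$ on $Y$, \emph{independent of $s$}, with $\bigl(\bigotimes^{s}\mathcal F\bigr)\otimes\mathcal G$ generated by its global sections over $U$ for every $s\ge 1$. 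Indeed, since $S^{s}\mathcal F$ is a quotient of $\bigotimes^{s}\mathcal F$ and restricts to $\widehat S^{s}\mathcal F$ over $U$, the sheaf $\widehat S^{s}\mathcal F\otimes\mathcal G$ is then generated by global sections over $U$ for all $s$; and given any ample $\mathcal H$ on $Y$ and any $a>0$, choosing $b\gg 0$ with $\mathcal H^{b}\otimes\mathcal G^{-1}$ globally generated makes $\widehat S^{ab}\mathcal F\otimes\mathcal H^{b}=\bigl(\widehat S^{ab}\mathcal F\otimes\mathcal G\bigr)\otimes\bigl(\mathcal H^{b}\otimes\mathcal G^{-1}\bigr)$ generated by global sections over $U$, which is exactly weak positivity over $U$. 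I note that this reduction uses only the surjection $\bigotimes^{s}\to S^{s}$, so the failure of $S^{a}S^{b}\to S^{ab}$ in characteristic $p$ causes no trouble here.

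First I would set up the fibre powers. For $s\ge 1$, let $X^{(s)}$ be a normalisation of the $s$-fold fibre product $X\times_Y\cdots\times_Y X$ (of its main component, if reducible), with projections $p_1,\dots,p_s\colon X^{(s)}\to X$ and structure morphism $f^{(s)}\colon X^{(s)}\to Y$. Since $X_{\overline\eta}$ is smooth, the geometric generic fibre of $f^{(s)}$ is $(X_{\overline\eta})^{\times s}$, which is smooth with nef canonical divisor $\sum_i\mathrm{pr}_i^{*}K_{X_{\overline\eta}}$; over $U$ the normalisation is an isomorphism, $f^{(s)}$ is smooth, $mK_{X^{(s)}/Y}$ restricts to $\sum_i p_i^{*}(mK_{X/Y})$, and flat base change together with the Künneth formula give
\[
f^{(s)}_{*}\mathcal O_{X^{(s)}}\Bigl(mK_{X^{(s)}/Y}+\textstyle\sum_i p_i^{*}(A+N)\Bigr)\Big|_{U}\;\cong\;\textstyle\bigotimes^{s}\bigl(\mathcal F|_{U}\bigr).
\]
The essential point is that $\sum_i p_i^{*}A$ is ample on $X^{(s)}$: the tuple $(p_1,\dots,p_s)$ gives a finite morphism $X^{(s)}\to X^{\times s}$ — the fibre product is closed in $X^{\times s}$ because $Y$ is separated, and normalisation is finite — and $\sum_i p_i^{*}A$ is the pull-back along it of an ample divisor. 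Granting that the threshold in ``sufficiently ample'' can be arranged to depend only on $Y$ (as the explicit bound $l\ge m(jn+1)$ in Theorem~\ref{thm:PS-type_generic_intro} suggests), $\sum_i p_i^{*}A$ is in fact sufficiently ample on $X^{(s)}$, uniformly in $s$; likewise $\sum_i p_i^{*}N$ is nef.

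Next I would fix an ample line bundle $\mathcal L$ on $Y$, with associated integer $j$, and apply Theorem~\ref{thm:PS-type_generic_intro} to $f^{(s)}\colon X^{(s)}\to Y$ with the sufficiently ample divisor $\sum_i p_i^{*}A$ and the nef divisor $\sum_i p_i^{*}N$: the sheaf
\[
f^{(s)}_{*}\mathcal O_{X^{(s)}}\Bigl(mK_{X^{(s)}}+\textstyle\sum_i p_i^{*}A+\sum_i p_i^{*}N\Bigr)\otimes\mathcal L^{m(jn+1)}
\]
is generically globally generated, with the exponent $m(jn+1)$ independent of $s$ since $n=\dim Y$ is unchanged. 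Writing $K_{X^{(s)}}=K_{X^{(s)}/Y}+(f^{(s)})^{*}K_Y$, applying the projection formula, restricting to $U$, and invoking the isomorphism above, I obtain that $\bigl(\bigotimes^{s}\mathcal F\bigr)\otimes\mathcal G$ is generated by global sections over $U$ for every $s\ge 1$, where $\mathcal G:=\mathcal O_Y(mK_Y)\otimes\mathcal L^{m(jn+1)}$ does not depend on $s$. By the first paragraph, $\mathcal F$ is weakly positive, as desired.

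The hard part will be the fibre powers in positive characteristic: with no resolution of singularities available, $X^{(s)}$ is only normal, so one must either use Theorem~\ref{thm:PS-type_generic_intro} in a form valid for normal total spaces — verifying that its Frobenius-trace proof needs only normality and the good behaviour of the canonical Weil divisor, and that the threshold concealed in ``sufficiently ample'' genuinely depends on $\dim Y$ alone, so that it survives passage up the tower — or else argue on the regular locus throughout and patch. The remaining issues are routine: checking $mK_{X^{(s)}/Y}\sim\sum_i p_i^{*}(mK_{X/Y})$ over the smooth locus, controlling the component structure of the fibre product by restricting to the flat locus of $f$, arranging the ``generically generated'' loci to be uniform in $s$ (so that weak positivity is obtained over a fixed dense open), and tracking the compatibility of base change with the global-generation data.
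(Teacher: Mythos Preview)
Your approach via Viehweg's fibre-product tower is genuinely different from the paper's, and the difficulty you flag as ``the hard part'' is a real gap, not a routine verification.

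The paper never forms fibre products. Instead, for each $\alpha\ge 1$ it chooses $d$ with $p^d\ge\alpha+1$ and base-changes $f$ along the $d$-th iterated Frobenius $F_Y^d\colon Y^d\to Y$, obtaining $f_{Y^d}\colon X_{Y^d}\to Y^d$. The point is that this base change does not alter the fibres, so the ``sufficiently ample'' hypotheses on $A$ --- relative freeness, surjectivity of the Frobenius trace, relative Castelnuovo--Mumford regularity (conditions (1)--(3) in the proof of Theorem~\ref{thm:PS-type}) --- are inherited by $A_{Y^d}$ automatically; Proposition~\ref{prop:e-surjective} is proved precisely to show the trace surjectivity persists under any flat base change $Z\to Y$. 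Applying Theorem~\ref{thm:PS-type} to $f_{Y^d}$ gives that ${F_Y^d}^*\mathcal G\otimes\mathcal H$ is generically globally generated for a fixed $\mathcal H$ independent of $d$; one then descends via the projection formula and the evaluation map $({F_Y^d}_*\mathcal O_Y)\otimes({F_Y^d}_*\mathcal O_Y)^\vee\to\mathcal O_Y$ to conclude that $S^{\alpha\beta}(\mathcal G)\otimes\mathcal H^\beta$ is generically globally generated for $\beta=lp^d$. The Frobenius thus plays the amplifying role that fibre products play in characteristic zero, but on the base rather than in the fibre direction.

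Your route runs into a concrete obstruction. You write that the threshold in ``sufficiently ample'' can be arranged to depend only on $Y$, citing the bound $l\ge m(jn+1)$; but this conflates two separate thresholds. The bound $l\ge m(jn+1)$ governs the twist by $\mathcal L$ on $Y$ and indeed depends only on $n$ and $j$. The phrase ``$A$ sufficiently ample'' in Theorem~\ref{thm:PS-type_generic}, however, encodes conditions on the total space --- relative freeness of $mK+A+N$, surjectivity of $f_*\phi^{(e)}$, and relative Castelnuovo--Mumford $0$-regularity of twists on the fibres --- and these depend on the geometry of $X_{\overline\eta}$, not just on $Y$. On $X^{(s)}$ the generic fibre is $(X_{\overline\eta})^{\times s}$, whose dimension grows linearly in $s$, and there is no reason the regularity bound for the fixed divisor $\sum_i p_i^*A$ stays uniform as $s\to\infty$ (box products of $0$-regular sheaves are not in general $0$-regular with respect to the box ample). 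You have not addressed this, and it is exactly the issue the paper's Frobenius trick is designed to sidestep: by amplifying on the base while keeping the fibres fixed, a single choice of $A$ on $X$ works for all $\alpha$.
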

We apply the above theorem to show a variant of Fujino's 
global generation theorem in positive characteristic, which is used to prove 
Theorem~\ref{thm:subadditivity_intro}. 
\begin{thm}[\textup{Theorem~\ref{thm:relative2_generic}}]  \samepage
\label{thm:relative2_intro}
Let $A$ be a sufficiently ample divisor on $X$. 
Suppose that $X_{\overline\eta}$ is smooth and $K_{X_{\overline\eta}}$ is nef. 
Let $\mathcal L$ and $\mathcal H$ be ample line bundles with 
$|\mathcal L|$ free. 
Then for each $m\ge 1$ and every nef divisor $N$ on $X$, the sheaf 
$$
f_*\mathcal O_X(mK_{X/Y} +A+N) \otimes \omega_Y 
\otimes \mathcal L^n \otimes \mathcal H
$$
is generically generated by its global sections. 
\end{thm}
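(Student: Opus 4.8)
The plan is to follow the structure of Fujino's characteristic-zero proof of this type of statement and to supply, by Frobenius methods, the one ingredient that fails in characteristic $p$. Write $\mathcal G:=f_*\mathcal O_X(mK_{X/Y}+A+N)$ and $\mathcal E:=\mathcal G\otimes\omega_Y\otimes\mathcal L^n\otimes\mathcal H$; the goal is that $\mathcal E$ be generically globally generated. Since $|\mathcal L|$ is base-point-free, a Castelnuovo--Mumford regularity argument with respect to $\mathcal L$ reduces this, over a nonempty open subset of $Y$, to the $n=\dim Y$ vanishings
\[
 H^i\bigl(Y,\ \mathcal G\otimes\omega_Y\otimes\mathcal L^{n-i}\otimes\mathcal H\bigr)=0\qquad(1\le i\le n),
\]
which is exactly where the $n$ copies of $\mathcal L$ enter; the case $i=n$ is immediate from Serre duality and the weak positivity of $\mathcal G$ (Theorem~\ref{thm:relative_intro}), so the content is $1\le i\le n-1$.

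For those, use $K_X=K_{X/Y}+f^{*}K_Y$ and the projection formula to rewrite the sheaf as $f_*\mathcal O_X(K_X+M_j)$ with $M_j:=(m-1)K_{X/Y}+A+N+f^{*}(jL+H)$, where $L,H$ represent $\mathcal L,\mathcal H$. Let $V\subseteq Y$ be the (nonempty) open locus over which $f$ is smooth with fibers having nef canonical class, so that $mK_X+A$ is $f$-ample over $V$. Because $A$ is sufficiently ample, relative Fujita vanishing yields — over $V$ and uniformly in $m$ — both $R^{>0}f_*\mathcal O_X(K_X+M_j)=0$ and the relative global generation of $\mathcal O_X(mK_{X/Y}+A+N)$ over $f^{-1}(V)$. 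Combining the latter with the weak positivity of $\mathcal G$ shows that $\mathcal O_X(mK_{X/Y}+A+N)$, being a quotient of $f^{*}\mathcal G$ there, is nef modulo $\varepsilon f^{*}(\text{ample})$ for every $\varepsilon>0$, hence nef over $f^{-1}(V)$ (and similarly for $\mathcal O_X((m-1)K_{X/Y}+A+N)$). This positivity, which in characteristic zero would be subsumed by Kodaira-type vanishing, is what one extracts by hand here.

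In characteristic zero one now invokes the Kollár/Ambro--Fujino vanishing theorem for weakly positive sheaves and is done; in characteristic $p$ that theorem fails (already Kawamata--Viehweg vanishing fails for $\mathcal O_Y$), so the plan is instead to obtain the required generic generation directly, via a Frobenius-pushforward argument combined with the effective Popa--Schnell-type generation of Theorem~\ref{thm:PS-type_generic_intro} applied to $f$ and to its fiber powers. Iterating the Frobenius trace $F^{e}_*\mathcal O_X(K_X+p^{e}M_j)\twoheadrightarrow\mathcal O_X(K_X+M_j)$ (surjective since $X$ is regular) and pushing forward by $f$ — using that $f$ commutes with Frobenius and that $F^e$ is affine — converts the problem, after absorbing the $p^{e}$-th powers into the ample and nef twists, into a Serre-type positivity statement for a large twist over $V$, which holds thanks to the relative global generation and nefness established above; one then spreads this out from $V$ and feeds it back into the regularity reduction.

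The hardest part is precisely this last step. The obstruction is that the hypothesis gives only fiber-nefness of $K_{X/Y}$, so that $(m-1)K_{X/Y}$ may be arbitrarily negative on curves inside the singular fibers and the relevant divisors on $X$ genuinely fail to be nef as $m$ grows, while the characteristic-zero vanishing theorems that would bypass this are unavailable. Making the Frobenius bootstrap interact correctly with the weak positivity of $\mathcal G$ and with the bound $l\ge m(jn+1)$ of Theorem~\ref{thm:PS-type_generic_intro} — uniformly in $m$, and with an error that fits inside the prescribed twist $\omega_Y\otimes\mathcal L^n\otimes\mathcal H$ — is the technical heart of the proof.
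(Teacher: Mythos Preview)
Your proposal has a genuine structural gap and diverges from how the paper actually argues.

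First, the Castelnuovo--Mumford reduction does not work as you describe. Regularity with respect to $\mathcal L$ gives global generation of a sheaf on the \emph{projective} $Y$ from the vanishings $H^i(Y,\mathcal E\otimes\mathcal L^{-i})=0$; there is no version that takes vanishings ``over a nonempty open subset'' and returns generic generation. If instead you intend the vanishings globally on $Y$, there is no reason to expect $H^i(Y,\mathcal G\otimes\omega_Y\otimes\mathcal L^{n-i}\otimes\mathcal H)=0$: the sheaf $\mathcal G$ may behave badly over the singular fibres, Kodaira/Koll\'ar-type vanishing is unavailable in characteristic $p$, and your Frobenius trace on $X$ only converts the problem into one about $f_*\mathcal O_X(K_X+p^eM_j)$, where now $p^e(m-1)K_{X/Y}$ is a large multiple of a divisor that is only \emph{fibrewise} nef --- exactly the obstruction you yourself flag. (Your claim that the case $i=n$ follows from Serre duality and weak positivity is also not justified: weak positivity is an asymptotic statement about symmetric powers and does not directly give $H^0(Y,\mathcal G^\vee\otimes\mathcal H^{-1})=0$.) Second, the proposal is not a proof: you correctly locate the difficulty (``making the Frobenius bootstrap interact correctly\ldots{} is the technical heart'') but never carry it out.

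The paper avoids cohomology vanishing for $\mathcal G$ altogether. From the Frobenius trace on $X$ one extracts, after pushforward to $Y$ and use of the relative multiplication map, a generically surjective morphism
\[
\bigoplus \bigl(F_Y^e\bigr)_*\!\Bigl(S^{q_e}(\mathcal G)\otimes\omega_Y^{\,1-r_e}\otimes\mathcal H^{p^e}\Bigr)\otimes\mathcal L^n\ \longrightarrow\ \mathcal G\otimes\omega_Y\otimes\mathcal L^n\otimes\mathcal H,
\]
where $q_e,r_e$ are the quotient and remainder of $(m-1)p^e+1$ by $m$. Weak positivity (Theorem~\ref{thm:relative_intro}) is then used not for vanishing but to make $S^{q_e}(\mathcal G)\otimes\mathcal H^{q_e}$ generically globally generated for $e\gg0$; what remains is that $\bigl(F_Y^e\bigr)_*(\mathcal F\otimes\mathcal H^{p^e-q_e})\otimes\mathcal L^n$ is globally generated for a fixed sheaf $\mathcal F$ and $e\gg0$, and \emph{this} follows from Castelnuovo--Mumford regularity plus Fujita vanishing --- applied to the Frobenius pushforward on $Y$, where the twist $p^e-q_e$ grows without bound. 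The factor $\mathcal L^n$ is spent here, not on a regularity criterion for $\mathcal G$ itself.
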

This paper is organized as follows: 
In Section~\ref{section:defn}, 
we define several notions and notations used in this paper. 
In Section~\ref{section:nef}, we treat algebraic fiber spaces 
whose generic fibers have nef canonical divisors, and prove several 
global generation theorem and the weak positivity theorem. 
Section~\ref{section:subadditivity} is devoted to prove the main theorem 
of this paper (Theorem~\ref{thm:subadditivity}). 
In Section~\ref{section:relatively_semi-ample}, we deal with algebraic fiber spaces whose total spaces have 
relatively semi-ample canonical divisors, and show that Iitaka’s conjecture 
holds true if the base spaces are of general type.
In Section~\ref{section:mAd}, we consider algebraic fiber spaces over 
varieties of maximal Albanese dimension, without assuming that 
the geometric generic fiber is $F$-pure (Definition~\ref{defn:F-pure}), 
and show that a couple of inequalities on the numerical dimension 
similar to Theorem~\ref{thm:subadditivity_intro} 
(Theorem~\ref{thm:subadditivity_mAd}). 
\begin{ack}
The author would like to thank Professors 
Osamu Fujino, Noboru Nakayama and Shunsuke Takagi 
for helpful and valuable comments. 
This work was partly supported by MEXT Promotion of Distinctive Joint Research Center Program JPMXP0619217849.
\end{ack}
\section{Definitions and Terminologies} \label{section:defn}
In this section, we define several notions and notations used in this paper. 

First, we define some terminologies. 

Throughout this paper, we work over an $F$-finite field $k$ of 
characteristic $p>0$ (i.e. $[k:k^p] <\infty$). 
A \textit{$k$-scheme} is a separated scheme of finite type over $k$. 
A \textit{variety} is an integral $k$-scheme. 

Let $X$ be an equi-dimensional $k$-scheme satisfying $S_2$ and $G_1$. 
Here, $S_2$ (resp. $G_1$) means Serre's second condition (resp. the condition 
that it is Gorenstein in codimension one). 
Let $\mathcal K$ be the sheaf of total quotient rings on $X$. 
An \textit{AC divisor} (or \textit{almost Cartier divisor}) 
is a coherent subsheaf of $\mathcal K$ that is invertible 
in codimension one (\cite[p. 301]{Har94}, \cite[Definition~2.1]{MS12}). 
Let $D$ be an AC divisor on $X$. 
Let $\mathcal O_X(D)$ denote the coherent sheaf defining $D$. 
We say that $D$ is \textit{effective} if $\mathcal O_X\subseteq\mathcal O_X(D)$. 
In this paper, we say that an effective divisor is \textit{prime}
if it cannot be written as the sum of two non-zero effective divisors. 
The set $\mathrm{WSh}(X)$ of AC divisors forms an additive group 
\cite[Corollary~2.6]{Har94}. 
Let $\mathbb Z_{(p)}$ denote the localization of the ring of integers $\mathbb Z$
at the prime ideal $(p)=p\mathbb Z$. 
A \textit{$\mathbb Z_{(p)}$-AC divisor} is an element of 
$\mathrm{WSh}(X)\otimes_{\mathbb Z}\mathbb Z_{(p)}$. 
Let $\Delta$ be a $\mathbb Z_{(p)}$-AC divisor on $X$. 
Then $\Delta$ can be written as a linear combination $\sum_i\delta_i\Delta_i$, 
where $\delta_i\in \mathbb Z_{(p)}$ and each $\Delta_i$ is prime. 
This decomposition is not necessarily unique (cf. \cite[(16.1.2)]{Kol+14}). 
\textbf{In this paper, we fix a decomposition $\sum_i\delta_i\Delta_i$ into prime divisors for a given $\mathbb Z_{(p)}$-AC divisor $\Delta$.} 
By this rule, for an integer $m\ge1$, the AC divisor $m\Delta$ 
is well-defined when each $m\delta_i$ is an integer. 
We say that a $\mathbb Z_{(p)}$-AC divisor $\Delta$ is 
\textit{$\mathbb Z_{(p)}$-Cartier} if $m\Delta$ is Cartier for an integer $m>0$
with $p\nmid m$. 

Let $\varphi:S\to T$ be a morphism of $k$-schemes. 
Let $T'$ be another $T$-scheme. 
Then we let $S_{T'}$ denote the fiber product $S\times_T T'$. 
The second projection $S\times_T T'\to T'$ is denoted by 
$\varphi_{T'}:S_{T'}\to T'$. 
Let $D$ be a Cartier divisor (resp. AC divisor, $\mathbb Z_{(p)}$-AC divisor) 
on $S$. 
Let $D_{T'}$ denote the pullback of $D$ to $S_{T'}$, if it is well-defined. 
Similarly, for a coherent sheaf $\mathcal F$ on $S$, 
we let $\mathcal F_{T'}$ be the pullback of $\mathcal F$ to $S_{T'}$. 

Let $X$ be a $k$-scheme. Let $e\ge 1$ be an integer. 
We denote by $F_X^e:X\to X$ the $e$-times iterated absolute Frobenius morphism on $X$. 
When we regard $X$ as the source of $F_X^e$, we denote it by $X^e$. 
Let $f:X\to Y$ be a morphism of $k$-schemes. 
We denote $f$ by $f^{(e)}$ if we regard $X$ and $Y$ as the sources of 
$F_X^e$ and $F_Y^e$, respectively; $f^{(e)}:X^e\to Y^e$. 
Then we can define the $e$-th relative Frobenius morphism 
$F_{X/Y}^{(e)}:=(F_X^e, f^{(e)}): X^e \to X_{Y^e}=X\times_Y Y^e$. 
We have the following commutative diagram: 
$$
\xymatrix{
X^e \ar[d]_-{F_{X/Y}^{(e)}} \ar[dr]^-{F_X^e} \ar@/_50pt/[dd]_-{f^{(e)}} & \\ 
X_{Y^e} \ar[r] \ar[d]_-{f_{Y^e}} & X \ar[d]^-f \\ 
Y^e \ar[r]_-{F_Y^e} & Y
}
$$

Next, we collect several definitions used in this paper. 
\begin{defn}[\textup{\cite[Definition 1]{Les22}, \cite[V.2.5]{Nak04}}] \label{defn:num dim}
Let $X$ be a normal projective variety. 
Let $D$ be an $\mathbb Q$-Cartier $\mathbb Q$-divisor on $X$. 
The \textit{numerical dimension} $\kappa_\sigma^{\mathbb R}(D)$ 
(resp. $\kappa_\sigma(D)$) of $D$ 
is the supremum (resp. maximum) of real numbers (resp. non-negative integers) 
$r$ such that 
$$
\underset{m\to\infty}{\mathrm{lim~sup}}~
\frac{h^0(X, \lfloor mD \rfloor +A)}{m^r}
>0
$$
for some ample Cartier divisor $A$ on $X$. 
If there exists no such $r$, we define 
$\kappa_\sigma^{\mathbb R}(D)=\kappa_{\sigma}(D)=-\infty$. 

When $X$ is regular, we define the \textit{numerical Kodaira dimension} 
$\kappa_\sigma(X)$ by 
$$
\kappa_\sigma(X):=\kappa_\sigma(K_X).
$$ 
\end{defn}
\begin{defn} \label{defn:positivity}
Let $Y$ be a quasi-projective $k$-scheme 
and $\mathcal G$ be a coherent sheaf on $Y$. 
Let $V$ be an open subset of $Y$. 
\\ \noindent(1) 
We say that $\mathcal G$ \textit{is generated by its global sections over $V$} or \textit{is globally generated over $V$} if the natural morphism 
$
H^0(Y,\mathcal G) \otimes_k \mathcal O_Y 
\to \mathcal G
$
is surjective over $V$. 
\\ \noindent(2) 
We say that $\mathcal G$ \textit{is generically generated by its global sections} or \textit{is generically globally generated} if the natural morphism 
$
H^0(Y,\mathcal G) \otimes_k \mathcal O_Y 
\to \mathcal G
$
is generically surjective. 
\\ \noindent(3)~(\cite[Variant~2.13]{Vie95}) 
Assume that $Y$ is normal. 
Let $\mathcal T$ be the torsion part of $\mathcal G$ 
and set $\mathcal G':=\mathcal G/\mathcal T$. 
Suppose that $\mathcal G|_V$ is locally free. 
Let $\mathcal H$ be an ample line bundle on $Y$. 
We say that $\mathcal G$ is \textit{weakly positive over $V$}
if any integer $\alpha\ge1$, 
there exists an integer $\beta\ge 1$ such that the sheaf 
$$
(S^{\alpha\beta}(\mathcal G'))^{\vee\vee} \otimes \mathcal H^{\beta}
$$
is generated by its global sections over $V$. 
Here, $S^m(?)$ denotes the $m$-th symmetric product and $(?)^{\vee\vee}$ denotes
the double dual. 
\\ \noindent(4) 
Assume that $Y$ is normal. 
Let $\mathcal G'$ and $\mathcal H$ be as in~(3). 
We say that $\mathcal G$ is \textit{pseudo-effective} if 
for any integer $\beta\ge1$, 
there exists an integer $\beta\ge 1$ such that the sheaf 
$$
(S^{\alpha\beta}(\mathcal G'))^{\vee\vee} \otimes \mathcal H^{\beta}
$$
is generically generated by its global sections. 
\end{defn}
\begin{rem} \label{rem:weak pos}
The notions of weak positivity and pseudo-effectivity are 
independent of the choice of the ample line bundle $\mathcal H$ 
(\cite[Lemma~2.14~a)]{Vie95}). 
\end{rem}
\begin{defn} \label{defn:F-pure}
Let $X$ be an equi-dimensional $k$-scheme satisfying $S_2$ and $G_1$. 
Let $\Delta$ be an effective $\mathbb Z_{(p)}$-AC divisor on $X$. 
We say that the pair $(X,\Delta)$ is \textit{$F$-pure} 
if for any integer $e\ge1$ such that $(p^e-1)\Delta$ is integral, 
the composite of 
$$
\mathcal O_X
\xrightarrow{{F_X^e}^\#} {F_X^e}_*\mathcal O_X
\hookrightarrow {F_X^e}_* \mathcal O_X((p^e-1)\Delta)
$$
locally splits as an $\mathcal O_X$-module homomorphism. 
\end{defn}
When $X$ is a normal variety, the above definition is equivalent to that in~\cite{HW02}. 
 
\medskip

Next, we define some notations used in this paper. 
Let $X$ be an equi-dimensional $k$-scheme satisfying $S_2$ and $G_1$. 
Let $\Delta$ be an effective $\mathbb Z_{(p)}$-AC divisor on $X$. 
Let $e_0$ be the smallest positive integer such that $(p^{e_0}-1)\Delta$ 
is integral. Then $(p^e-1)\Delta$ is integral for each $e\ge 1$ with $e_0|e$. 
Let us consider the composite 
$$
\mathcal O_X 
\xrightarrow{{F_X^e}^\#} {F_X^e}_*\mathcal O_X 
\hookrightarrow {F_X^e}_*\mathcal O_X((p^e-1)\Delta)
$$
for an integer $e\ge 1$ with $e_0|e$. 
Let $M$ be a Cartier divisor on $X$. 
Applying $\mathcal Hom(?, \mathcal O_X(M))$ to the above composite, 
we obtain 
$$
{F_X^e}_*\mathcal O_X((1-p^e)(K_X+\Delta)+p^eM)
\to \mathcal O_X(M)
$$
by the Grothendieck duality and the projection formula. 
Let $\phi_{(X,\Delta)}^{(e)}(M)$ denote this morphism. 
Let $f:X\to Y$ be a morphism to a regular variety $Y$. 
Consider the composite 
$$
\mathcal O_{X_{Y^e}}
\xrightarrow{{F_{X/Y}^{(e)}}^\#} {F_{X/Y}^{(e)}}_*\mathcal O_X
\hookrightarrow {F_{X/Y}^{(e)}}_*\mathcal O_X((p^e-1)\Delta)
$$
for each $e\ge1$ with $e_0|e$. 
Applying $\mathcal Hom(?,\mathcal O_{X_{Y^e}}(M_{Y^e}))$ to the above composite, 
we get 
$$
{F_{X/Y}^{(e)}}_*\mathcal O_X((1-p^e)(K_{X/Y}+\Delta) +p^eM)
\to \mathcal O_{X_{Y^e}}(M_{Y^e}). 
$$
Let $\phi_{(X/Y,\Delta)}^{(e)}(M_{Y^e})$ denote this morphism. 
\section{Algebraic fiber spaces whose generic fibers have nef canonical divisors}
\label{section:nef}
In this section, we treat an algebraic fiber space whose generic fiber 
has nef canonical divisor. 
To prove Theorem~\ref{thm:PS-type}, we need the following lemmas. 
\begin{lem} \label{lem:rel free}
Let $f:X\to Y$ be a projective morphism between $k$-schemes. 
Let $A$ be an $f$-ample Cartier divisor on $X$. 
Then there exists an integer $m_0 \ge 1$ such that 
$\mathcal O_X(mA+N)$ is relatively free over $Y$ 
for each $m\ge m_0$ and every $f$-nef Cartier divisor $N$. 
\end{lem}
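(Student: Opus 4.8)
The plan is to bootstrap everything from Fujita's vanishing theorem. First I would reduce to the affine case: the conclusion is local on $Y$ and $Y$ is quasi-compact, so I may assume $Y=\operatorname{Spec}R$ with $R$ a finitely generated $k$-algebra, $X$ projective over $R$, and $A$ an ample divisor on $X$; here ``relatively free over $Y$'' means simply generated by global sections over $R$. The key observation at this point is that over an affine base the notions ``$f$-nef'' and ``nef'' coincide on $X$: any complete curve $C\subseteq X$ has image a proper integral subscheme of the affine scheme $Y$, hence a point, so $C$ lies in a fibre of $f$. This is what converts the relative problem into an absolute one and, more importantly, lets me treat the infinitely many possible $N$ uniformly.

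Next I would fix once and for all an integer $m_1\ge1$ so that $A_0:=m_1A$ is very ample over $R$, giving a closed immersion of $X$ into a projective space over $R$ with $\mathcal O(1)|_X\cong\mathcal O_X(A_0)$. Applying Fujita's vanishing theorem to the ample divisor $A$ on the projective $R$-scheme $X$ produces an integer $m_2\ge1$ such that $H^i(X,\mathcal O_X(tA+D))=0$ for all $i>0$, all $t\ge m_2$, and every nef Cartier divisor $D$ on $X$. Set $d:=\dim X$ and $m_0:=m_2+m_1 d$. Then for $m\ge m_0$ and any $f$-nef (equivalently nef) Cartier divisor $N$, and for $1\le i\le d$, one has $m-im_1\ge m_2$, so $H^i(X,\mathcal O_X(mA+N-iA_0))=H^i(X,\mathcal O_X((m-im_1)A+N))=0$; together with Grothendieck vanishing $H^i(X,-)=0$ for $i>d$, this says $\mathcal O_X(mA+N)$ is $0$-regular with respect to $A_0$, and Castelnuovo--Mumford regularity then gives that it is generated by its global sections over $R$. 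Finally I would globalise: cover $Y$ by finitely many affine opens $Y_1,\dots,Y_s$, apply the above to each $f^{-1}(Y_j)\to Y_j$ (the restriction of an $f$-nef divisor to $f^{-1}(Y_j)$ is nef over $Y_j$, since a complete curve in $f^{-1}(Y_j)$ is complete in $X$ and contracted by $f$), and take $m_0$ to be the maximum of the resulting bounds; relative global generation can be checked on this open cover by flat base change.

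The only real difficulty is the uniformity over $N$: naive relative Serre vanishing would produce a bound depending on $N$, whereas there are infinitely many numerical classes of $f$-nef divisors. Fujita's vanishing theorem is precisely the tool that supplies a bound independent of the nef twist, so the whole argument hinges on having it at hand (over the Noetherian ring $R$, or, alternatively, in its relative form giving $R^if_*\mathcal O_X(mA+N)=0$, which one could quote directly and combine with the relative version of Mumford regularity, bypassing the affine reduction). Everything else — the affine reduction, the regularity estimate, and the gluing — is routine.
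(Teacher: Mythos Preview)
Your proof is correct and follows essentially the same approach as the paper: combine Fujita-type vanishing (uniform in the nef twist) with Castelnuovo--Mumford regularity to force global generation. The paper's one-line proof simply cites the relative versions directly---Keeler's relative Fujita vanishing \cite[Theorem~1.5]{Kee03} and relative Castelnuovo--Mumford regularity \cite[Example~1.8.24]{Laz04I}---which is exactly the alternative you sketch in your final paragraph; your main line just unpacks this by passing to an affine base first.
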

\begin{proof}
This follows from the relative Castelnuovo--Mumford regularity \cite[Example~1.8.24]{Laz04I} and Keeler's relative Fujita vanishing theorem \cite[Theorem~1.5]{Kee03}. 
\end{proof}
\begin{lem}[\textup{\cite[Lemma~3.4]{Eji19p}}] \label{lem:glgen}
Let $f:X\to Y$ be a morphism between projective $k$-schemes. 
Let $\mathcal F$ be a coherent sheaf on $X$ 
and let $A$ be an ample Cartier divisor on $X$. 
Then there exists an integer $m_0\ge 1$ such that 
$$
f_* \mathcal F(mA+N)
$$
is generated by its global sections for each $m\ge m_0$ 
and every nef Cartier divisor $N$ on $X$. 
\end{lem}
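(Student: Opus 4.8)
The plan is to reduce the statement to an absolute global generation statement on $Y$ by pushing forward along $f$, and then to invoke the standard Serre-type argument for global generation of coherent sheaves twisted by an ample line bundle, together with Keeler's relative Fujita vanishing to keep everything uniform in the nef divisor $N$. First I would choose an ample Cartier divisor $B$ on $Y$ such that $f^*B$ is dominated by $A$ in the sense that $A - f^*B$ is $f$-ample (or more simply, fix any ample $B$ on $Y$; since $A$ is ample on $X$, for some $c\ge 1$ the divisor $cA - f^*B$ is ample on $X$). The point of introducing $B$ is that by the Cartan--Serre--Grothendieck theorem, a coherent sheaf $\mathcal G$ on the projective $k$-scheme $Y$ is globally generated as soon as it is of the form $\mathcal G_0 \otimes \mathcal O_Y(dB)$ for $d$ large, so it suffices to produce a bound of the shape $f_*\mathcal F(mA+N) \cong (\text{something}) \otimes \mathcal O_Y(dB)$ with $d$ controllable.

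The key steps, in order, are: (i) By the projection formula, $f_*\big(\mathcal F(mA + N)\big) = f_*\big(\mathcal F((m-c)A + N) \otimes \mathcal O_X(cA)\big)$, and since $cA - f^*B$ is ample, we may write $cA = f^*B + H$ with $H$ ample on $X$; thus it is enough to globally generate $f_*\big(\mathcal F((m-c)A + N) \otimes \mathcal O_X(H)\big) \otimes \mathcal O_Y(B)$, i.e.\ to show the sheaf $\mathcal G_m := f_*\big(\mathcal F((m-c)A + N)\otimes \mathcal O_X(H)\big)$ is globally generated after one twist by $B$ — but this is not quite automatic, so instead (ii) I would argue directly: a coherent sheaf $\mathcal G$ on $Y$ is globally generated once $H^1(Y, \mathcal G \otimes \mathcal M^{-1})$ vanishes for every $\mathcal M$ in a suitable family (e.g.\ once $\mathcal G \otimes B^{-\dim Y}$-type Castelnuovo--Mumford regularity holds); more efficiently, apply Lemma~\ref{lem:rel free}-style reasoning is not available here, so I fall back on (iii) the Leray spectral sequence: $H^i(Y, f_*\mathcal F(mA+N)\otimes \mathcal O_Y(dB))$ is controlled by $H^i(X, \mathcal F(mA+N+df^*B))$ up to higher direct images, and the higher direct images $R^q f_* \mathcal F(mA+N)$ vanish for $m \gg 0$ uniformly in $N$ by relative Fujita vanishing (Keeler \cite[Theorem~1.5]{Kee03}) applied to $A$ $f$-ample and $N$ $f$-nef. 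Hence for $m$ past some threshold, $H^i(Y, f_*\mathcal F(mA+N)\otimes \mathcal O_Y(dB)) \cong H^i(X, \mathcal F(mA+N+df^*B))$.

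Now (iv) I invoke absolute Fujita vanishing on $X$: there is an integer $m_1$, depending only on $\mathcal F$ and $A$, such that $H^i(X, \mathcal F(mA + N')) = 0$ for all $i>0$, all $m \ge m_1$, and every nef Cartier divisor $N'$ on $X$; applying this with $N' = N + d f^*B$ (which is nef as a sum of a nef and a semi-ample-hence-nef divisor) kills the relevant higher cohomology uniformly. Combining (iii) and (iv), for $m \ge \max(m_0', m_1)$ the sheaf $f_*\mathcal F(mA+N)$ on $Y$ has the property that $H^i(Y, f_*\mathcal F(mA+N) \otimes \mathcal O_Y(dB)) = 0$ for all $i > 0$ and all $d \ge 0$; in particular, choosing $d$ so that $dB$ is Castelnuovo--Mumford-regular-forcing on $Y$ (recall $Y$ is projective, so $\dim Y$-regularity with respect to $B$ suffices), the sheaf $f_*\mathcal F(mA+N)$ is $0$-regular with respect to $B$, hence globally generated. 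The threshold $m_0$ is then $\max$ of the Fujita-vanishing thresholds on $X$ and on $Y$, which depend only on $\mathcal F$, $A$, $f$, and the fixed $B$ — not on $N$ — as required. The main obstacle is bookkeeping the uniformity in $N$ at every stage: one must be careful that each vanishing result (Keeler's relative version on $X/Y$, and the absolute version on $X$) is applied in its Fujita-vanishing form, so that the single integer $m_0$ works simultaneously for all nef $N$; using ordinary Serre vanishing instead would force $m_0$ to depend on $N$ and break the statement.
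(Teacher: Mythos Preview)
The paper does not give a proof of this lemma; it simply cites \cite[Lemma~3.4]{Eji19p}. So there is no ``paper's proof'' to compare with, but your strategy is the natural one and is essentially what such a proof must do. However, there is a genuine gap in your step~(iv)--(v).

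You prove that $H^i\bigl(Y, f_*\mathcal F(mA+N)\otimes\mathcal O_Y(dB)\bigr)=0$ for all $i>0$ and all $d\ge 0$, and then conclude that $f_*\mathcal F(mA+N)$ is $0$-regular with respect to $B$. That deduction is wrong: $0$-regularity requires vanishing at the \emph{negative} twists $d=-1,\dots,-n$, and vanishing for $d\ge 0$ alone does not force global generation (e.g.\ $\mathcal O_{\mathbb P^1}(-1)$ satisfies $H^i(\mathcal O(-1+d))=0$ for all $i>0$, $d\ge 0$, yet is not globally generated). Your application of Fujita vanishing with $N'=N+df^*B$ only works when $df^*B$ is nef, i.e.\ $d\ge 0$, so as written the argument stops one step short.

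The fix is exactly the trick you introduced in step~(i) and then abandoned: pick $c$ with $cA-f^*B$ ample. For $1\le i\le n$ write
\[
mA+N-if^*B=(m-ci)A+\bigl(N+i(cA-f^*B)\bigr),
\]
where the second summand is nef. Now choose $m_0$ so that both absolute Fujita vanishing on $X$ and relative Fujita vanishing (Keeler) hold once the coefficient of $A$ is at least $m_0-cn$; then for $m\ge m_0$ all higher direct images of $\mathcal F(mA+N-if^*B)$ vanish, Leray collapses, and $H^i\bigl(Y,f_*\mathcal F(mA+N)(-iB)\bigr)\cong H^i\bigl(X,\mathcal F((m-ci)A+N+i(cA-f^*B))\bigr)=0$. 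This gives $0$-regularity with respect to $B$ uniformly in $N$, and hence global generation. So your ingredients are all correct; you just need to route the negative twist $-if^*B$ back through $A$ rather than through $N$.
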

\begin{lem} \label{lem:Frobenius glgen}
Let $W$ be a projective variety of dimension $n$, 
let $L$ be an ample line bundle on $W$, 
and let $j$ be the smallest positive integer such that $|jL|$ is free. 
Let $Y$ be an open subset of $W$ and let $\mathcal G$ be a coherent sheaf on $Y$. 
Let $\{a_e\}_{e\ge 1}$ be a sequence of positive integers such that 
$a_e/p^e$ converges to $\varepsilon +jn$ for an $\varepsilon\in\mathbb R_{>0}$. 
Then there exists an $e_0\ge 1$ such that 
$$
{F_Y^e}_*\mathcal G(a_eL|_Y)
$$
is generated by its global sections for each $e\ge e_0$. 
\end{lem}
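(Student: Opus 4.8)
The plan is to reduce the statement to an application of Lemma~\ref{lem:glgen} after twisting by a carefully chosen auxiliary divisor that absorbs the Frobenius pushforward. First I would extend $\mathcal G$ to a coherent sheaf $\widetilde{\mathcal G}$ on all of $W$ (via the closed inclusion of $W\setminus Y$ and standard extension of coherent sheaves), so that it suffices to prove that ${F_W^e}_*\widetilde{\mathcal G}(a_e L)$ is globally generated on the preimage of $Y$ under $F_W^e=F_W^e$; since $F_W^e$ is a homeomorphism, restricting the conclusion back to $Y$ gives what we want. Thus we may assume $\mathcal G$ is a coherent sheaf on the projective variety $W$ itself and only need the global generation to hold over $Y$.

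Next I would exploit the factorization of the Frobenius twist: writing $a_e = p^e b_e + c_e$ with $0\le c_e<p^e$ makes this clumsy, so instead I would use the projection-formula identity ${F_W^e}_*\bigl(\mathcal G(a_e L)\bigr)\cong {F_W^e}_*\bigl(\mathcal G(a_e L - p^e D)\bigr)\otimes \mathcal O_W(D)$ for any Cartier divisor $D$. The idea is to choose $D = \lceil (a_e/p^e - jn - \varepsilon') \rceil \, jL \approx 0\cdot jL$ for small $\varepsilon'$; more to the point, since $a_e/p^e \to \varepsilon + jn$, for $e$ large we have $a_e \ge p^e jn$, and I would set $D := jn\,L$ (note $jn L = n\cdot (jL)$ is globally generated as a sum of $n$ copies of the free linear system $|jL|$). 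Then
$$
{F_W^e}_*\mathcal G(a_e L) \cong {F_W^e}_*\bigl(\mathcal G((a_e - p^e jn)L)\bigr)\otimes \mathcal O_W(jn\,L).
$$
Since $jnL$ is globally generated on all of $W$, it suffices to show ${F_W^e}_*\bigl(\mathcal G((a_e - p^e jn)L)\bigr)$ is globally generated over $Y$. Setting $b_e := a_e - p^e jn$, we have $b_e/p^e \to \varepsilon > 0$, so $b_e \to \infty$ and in particular $b_e \ge 1$ for $e$ large.

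Now I would apply Lemma~\ref{lem:glgen} with the morphism $f = F_W^e : W^e \to W$, the ample Cartier divisor $A = L$ on the source $W^e$, the coherent sheaf $\mathcal F = \mathcal G$, and $N = 0$: this yields an integer $m_0$ (depending on $\mathcal G$, $L$, but \emph{not} on $e$, since the source $W^e$ is just $W$ with $L$ still ample and the sheaf $\mathcal G$ is fixed) such that ${F_W^e}_*\mathcal G(b_e L)$ is globally generated whenever $b_e \ge m_0$. The potential subtlety here is that $m_0$ from Lemma~\ref{lem:glgen} is obtained for a fixed morphism, whereas here the morphism $F_W^e$ varies with $e$; however, as schemes the source and target are both $W$ and the divisor $L$ is the same ample divisor throughout, and the only thing changing is that we are pushing forward along higher and higher Frobenius powers — but Lemma~\ref{lem:glgen} applied once to the \emph{identity} $W\to W$ already gives that $\mathcal G(b_e L)$ is globally generated for $b_e \ge m_0$, and pushing forward a globally generated sheaf along the affine morphism... no: Frobenius on a projective variety is finite but not affine in the needed sense, so instead I would apply Lemma~\ref{lem:glgen} directly to $F_W^e$, observing that Ejiri's proof of that lemma gives a bound depending only on the regularity of $\mathcal F$ with respect to $A$, which is unchanged. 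This uniformity — that a single $m_0$ works for all $e$ — is the main obstacle, and I would address it by invoking the explicit form of the Castelnuovo–Mumford / Fujita-type bound underlying Lemma~\ref{lem:glgen}. Since $b_e/p^e \to \varepsilon > 0$ forces $b_e \ge m_0$ for all $e \ge e_0$ for some $e_0$, the lemma follows: ${F_W^e}_*\mathcal G(a_e L)$ is globally generated over $Y$ for all $e \ge e_0$.
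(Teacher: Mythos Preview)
Your reduction to showing that ${F_W^e}_*\bigl(\mathcal G(b_e L)\bigr)$ is globally generated, where $b_e = a_e - p^e jn$ and $b_e/p^e \to \varepsilon$, asks for something that is in general \emph{false}, so no amount of sharpening Lemma~\ref{lem:glgen} will rescue it. Take $W = \mathbb P^n$ with $n\ge 2$ over an algebraically closed field, $L = \mathcal O(1)$, $j = 1$, $\mathcal G = \mathcal O_W$, and $\varepsilon < 1$. Then ${F_W^e}_*\mathcal O_W(b_e)$ is locally free of rank $p^{en}$, so global generation at a closed point would require a surjection from $H^0(\mathbb P^n,\mathcal O(b_e))$, of dimension $\binom{b_e+n}{n}\sim b_e^n/n! \sim (\varepsilon p^e)^n/n!$, onto a space of dimension $p^{en}$. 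For $\varepsilon$ small this is impossible by a dimension count. In other words, pulling the factor $\mathcal O_W(jnL)$ outside the Frobenius pushforward throws away exactly the positivity needed: the decomposition is a sufficient condition that is strictly stronger than the lemma's conclusion.

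The paper's argument avoids this by working directly with the full twist $a_e L$ and checking Castelnuovo--Mumford $0$-regularity of ${F_W^e}_*\mathcal G(a_e L)$ with respect to $jL$. By the projection formula,
\[
H^i\bigl(W,\, {F_W^e}_*\mathcal G(a_e L)\otimes \mathcal O_W(-ijL)\bigr)
\cong H^i\bigl(W,\, \mathcal G\bigl((a_e - p^e ij)L\bigr)\bigr),
\]
and for $1\le i\le n$ one has $(a_e - p^e ij)/p^e \to \varepsilon + (n-i)j > 0$, so $a_e - p^e ij \to \infty$ and Serre vanishing gives the required $H^i=0$ for all large $e$. The point you are missing is that each regularity condition costs a twist of $p^e ij$ \emph{inside} the pushforward, and the worst case $i=n$ eats up the entire $p^e jn$; the margin $\varepsilon$ is what makes Serre vanishing kick in. Your decomposition spends that $p^e jn$ on a single external tensor factor and leaves nothing for the higher-cohomology vanishing.
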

\begin{proof}
Let $\mathcal G'$ be a coherent sheaf on $W$ 
such that $\mathcal G'|_Y\cong \mathcal G$. 
Then we have 
$$
\left({F_W^e}_*\mathcal G'(a_eL)\right)|_Y \cong {F_Y^e}_* \mathcal G(a_eL|_Y), 
$$
so we may assume that $W=Y$. 
We show that ${F_Y^e}_*\mathcal G(a_eL)$ is $0$-regular with respect to $jL$ 
in the sense of Castelnuovo--Mumford. 
If this holds, then our claim follows from \cite[Theorem~1.8.5]{Laz04I}. 
For each $0<i\le n$, we have 
\begin{align*}
H^i\big(Y, \left( {F_Y^e}_*\mathcal G(a_eL) \right) 
\otimes \mathcal O_Y(-ijL)\big) 
& \cong H^i\big(Y, {F_Y^e}_*\mathcal G((a_e-p^eij)L) \big) 
\\ & \cong H^i\big(Y, \mathcal G((a_e-p^eij)L) \big) 
=:V_{i,e}
\end{align*}
by the projection formula. 
Since $a_e/p^e -ij \xrightarrow{e\to \infty} \varepsilon +(n-i)j >0$, 
we see that $a_e-p^eij\xrightarrow{e\to \infty} \infty$, 
so by the Serre vanishing, there is an $e_0\ge 1$ such that 
$V_{i,e}=0$ for each $e\ge e_0$ and $0<i\le n$. 
\end{proof}
\begin{thm} \label{thm:PS-type} \samepage
Let $\bar f:\bar X \to \bar Y$ be a surjective morphism 
from a projective equi-dimensional $k$-scheme $\bar X$ 
to a projective variety $\bar Y$ of dimension $n$. 
Let $Y$ be a dense open subset of $\bar Y$ 
and set $X:=\bar f^{-1}(Y)$ and $f:=\bar f|_X:X\to Y$. 
Let $\bar A$ be a sufficiently ample Cartier divisor on $\bar X$ 
and put $A:=\bar A|_X$. 
Let $\bar{\mathcal L}$ be an ample line bundle on $\bar Y$ and let 
$j$ be the smallest positive integer such that $|\bar{\mathcal L}^j|$ is free. 
Set $\mathcal L:=\bar{\mathcal L}|_Y$. 
Assume that $X$ satisfies $S_2$ and $G_1$. 
Let $\Delta$ be an effective $\mathbb Z_{(p)}$-AC divisor on $X$. 
Suppose further that 
\begin{itemize}
\item $K_X+\Delta$ is $\mathbb Z_{(p)}$-Cartier, 
\item there exists an open subset $V$ of $Y$ such that 
	$(U,\Delta|_U)$ is $F$-pure, where $U:=f^{-1}(V)$, and 
\item $K_U+\Delta|_U$ is nef over $V$. 
\end{itemize}
Let $\bar N$ be a Cartier divisor on $\bar X$ such that 
$\bar A +\bar N$ is ample and $\bar N|_U$ is nef over $V$ $($e.g. $N$ is nef$)$. 
Put $N:=\bar N|_X$. 
Let $m\ge1$ be an integer such that $m(K_X+\Delta)$ is Cartier. 
Then the sheaf 
$$
f_*\mathcal O_X(m(K_X+\Delta) +A +N)
\otimes \mathcal L^l 
$$
is generated by its global sections over $V$ for each $l\ge m(jn+1)$. 
\end{thm}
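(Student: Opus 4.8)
The plan is to use the relative Frobenius morphism to turn the $F$-purity hypothesis into a splitting that provides a copy of $f_*\mathcal O_X(m(K_X+\Delta)+A+N)$ as a direct summand of a pushforward twisted by a Frobenius pullback, and then to apply Lemma~\ref{lem:Frobenius glgen} on the base to deduce global generation of that larger sheaf over $V$. Concretely, fix $e\ge 1$ with $e_0\mid e$, where $e_0$ is the smallest positive integer making $(p^{e_0}-1)\Delta$ integral, and consider the morphism $\phi_{(X/Y,\Delta)}^{(e)}(M_{Y^e})$ defined in Section~\ref{section:defn}, applied with the Cartier divisor $M:=m(K_X+\Delta)+A+N$. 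Since $(U,\Delta|_U)$ is $F$-pure, the map $\mathcal O_{X_{Y^e}}\to {F_{X/Y}^{(e)}}_*\mathcal O_X((p^e-1)\Delta)$ splits locally over $U_{Y^e}$, hence $\phi_{(X/Y,\Delta)}^{(e)}(M_{Y^e})$ is surjective over $U_{Y^e}$; in particular, applying $f_{Y^e*}$, the sheaf $f_{Y^e*}\bigl({F_{X/Y}^{(e)}}_*\mathcal O_X((1-p^e)(K_{X/Y}+\Delta)+p^eM)\bigr)$ surjects, over $V$, onto $f_{Y^e*}\mathcal O_{X_{Y^e}}(M_{Y^e})=\bigl(f_*\mathcal O_X(M)\bigr)_{Y^e}={F_Y^e}^*\bigl(f_*\mathcal O_X(M)\bigr)$ up to the usual flat-base-change identification.

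Next I would rewrite the source sheaf so that Lemma~\ref{lem:Frobenius glgen} applies. Using $f_{Y^e}\circ F_{X/Y}^{(e)}=f^{(e)}$ and the projection formula, one gets
$$
f_{Y^e*}\bigl({F_{X/Y}^{(e)}}_*\mathcal O_X((1-p^e)(K_{X/Y}+\Delta)+p^eM)\bigr)
\cong {F_Y^e}_*\,f^{(e)}_*\mathcal O_{X^e}\bigl((1-p^e)(K_{X/Y}+\Delta)+p^eM\bigr).
$$
Now expand $(1-p^e)(K_{X/Y}+\Delta)+p^eM$ with $M=m(K_X+\Delta)+A+N$. Collecting the canonical terms, the coefficient of $K_X$ becomes $(1-p^e)+p^e m+(p^e-1)=p^e m$ after absorbing $K_{X/Y}=K_X-f^*K_Y$, so writing things out the relative divisor on $X^e$ takes the shape $p^e m(K_X+\Delta)+p^eA+p^eN+(1-p^e)(K_{X/Y}+\Delta)+\dots$; the key point is that, modulo a pullback from $Y$, it equals $\bigl(m(K_X+\Delta)+A+N\bigr)$ plus a divisor that is the sum of a nef piece (coming from $(p^e-1)\bigl((m-1)(K_X+\Delta)+N\bigr)$, nef over $V$ by the hypotheses $K_U+\Delta|_U$ nef and $\bar N|_U$ nef) and $(p^e-1)$ times an ample-over-$Y$ piece built from $A$. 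Choosing $\bar A$ sufficiently ample, Lemma~\ref{lem:rel free} and Lemma~\ref{lem:glgen} guarantee that the relevant pushforward along $f$ is a fixed coherent sheaf $\mathcal G$ on $Y$ twisted by $a_e\,\mathcal L|_Y$ for a sequence $a_e$ with $a_e/p^e\to \varepsilon+jn$; one picks the twist so that $a_e = p^e(l-m(jn+1)) + (\text{something})p^e jn + o(p^e)$, which forces $\varepsilon=l/m-(jn+1)\ge 0$ via $l\ge m(jn+1)$, and a small perturbation makes $\varepsilon>0$.

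With that normalization, Lemma~\ref{lem:Frobenius glgen} gives an $e_1\ge 1$ such that ${F_Y^e}_*\bigl(\mathcal G(a_e\mathcal L|_Y)\bigr)$ is globally generated over $V$ for all $e\ge e_1$ (enlarging $W$ to $\bar Y$, $L$ to $\bar{\mathcal L}$ as in that lemma). Tensoring the surjection from Step one by $\mathcal L^l$ and using that a quotient of a globally-generated-over-$V$ sheaf is globally generated over $V$, we conclude that ${F_Y^e}^*\bigl(f_*\mathcal O_X(M)\bigr)\otimes\mathcal L^l$ — equivalently, by the projection formula, ${F_Y^e}_*$ applied to an $e$-independent twist — is globally generated over $V$; but then descending along the splitting furnished by $F$-purity (the composite $\mathcal O\to {F_Y^e}_*\mathcal O\to\dots$ on the base is not needed — rather, $f_*\mathcal O_X(M)\otimes\mathcal L^l$ is a direct summand of the globally-generated sheaf after applying $\phi$ and the trace map), global generation over $V$ of $f_*\mathcal O_X(M)\otimes\mathcal L^l$ follows. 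The main obstacle I anticipate is the bookkeeping in the middle step: isolating, after the Frobenius twist and the expansion of $(1-p^e)(K_{X/Y}+\Delta)+p^eM$, precisely which part is nef over $V$ (so that Lemma~\ref{lem:glgen}'s "$N$ nef" hypothesis applies uniformly in $e$) and which part contributes the $jn$ in the limit $a_e/p^e\to\varepsilon+jn$, all while keeping the leftover a pullback from $Y$ so that $\mathcal G$ is genuinely independent of $e$; getting the inequality $l\ge m(jn+1)$ to be exactly what is needed for $\varepsilon\ge 0$ is where the constant $jn+1$ must be tracked carefully.
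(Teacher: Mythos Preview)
There is a genuine gap in the middle step. You claim that the Frobenius source can be written as ${F_Y^e}_*\bigl(\mathcal G(a_e L|_Y)\bigr)$ for a \emph{single} coherent sheaf $\mathcal G$ on $Y$ independent of $e$, but this is not the case. Unwinding (in the absolute form), the source is
\[
{F_Y^e}_*\,f_*\mathcal O_X\bigl(((m-1)p^e+1)(K_X+\Delta)+p^e(A+N)\bigr),
\]
and the divisor on $X$ varies with $e$ in a way that does \emph{not} differ from a fixed divisor by a pullback from $Y$; the sheaves $f_*\mathcal O_X(D_e)$ are genuinely different for different $e$, not line-bundle twists of one another. Hence Lemma~\ref{lem:Frobenius glgen}, whose hypothesis is precisely a fixed $\mathcal G$ twisted by $\mathcal O_Y(a_eL)$, does not apply as you propose. (Your appeal to Lemma~\ref{lem:glgen} also runs into trouble: the ``nef piece'' $(p^e-1)\bigl((m-1)(K_X+\Delta)+N\bigr)$ you isolate is only nef over $V$, not on $\bar X$, and $(m-1)(K_X+\Delta)$ need not even be Cartier.)

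The paper resolves this with a bootstrap step that your sketch is missing. Writing $(m-1)p^e+1=mq_e+r_e$ with $0\le r_e<m$, one uses the multiplication map
\[
S^{q_e}\bigl(f_*\mathcal O_X(mK+A+N)\bigr)\otimes f_*\mathcal O_X\bigl(r_eK+(p^e-q_e)(A+N)\bigr)\longrightarrow f_*\mathcal O_X\bigl(((m-1)p^e+1)K+p^e(A+N)\bigr),
\]
surjective over $V$ by relative Castelnuovo--Mumford regularity. The second factor now involves only the \emph{finitely many} sheaves $\mathcal O_X(r_eK)$ twisted by growing multiples of the ample $\bar A+\bar N$, so Lemma~\ref{lem:glgen} applies uniformly in $e$ and kills that factor. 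Setting $\mathcal G:=f_*\mathcal O_X(mK+A+N)$ and letting $s$ be the least integer with $\mathcal G(sL)$ globally generated over $V$, one obtains a surjection over $V$ from $\bigoplus{F_Y^e}_*\mathcal O_Y\bigl((p^el-q_es)L\bigr)$ onto $\mathcal G(lL)$; \emph{now} Lemma~\ref{lem:Frobenius glgen} applies with the fixed sheaf $\mathcal O_Y$. Since $(p^el-q_es)/p^e\to l-\tfrac{m-1}{m}s$, this forces $s\le\tfrac{m-1}{m}s+jn+1$, i.e.\ $s\le m(jn+1)$. The exact threshold $m(jn+1)$ thus emerges from this self-referential inequality, not from a direct asymptotic of $a_e/p^e$ as in your outline.
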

We have the following commutative diagram whose each square is cartesian: 
$$
\xymatrix{
U \ar[d]_-g \ar@{^(->}[r] 
& X \ar[d]^-f \ar@{^(->}[r] 
& \bar X \ar[d]^-{\bar f} \\
V \ar@{^(->}[r]
& Y \ar@{^(->}[r] 
& \bar Y. 
}
$$
Here, $g:=f|_U:U\to V$. 
\begin{proof}
The proof is partially based on the idea of the referee of \cite{Eji19d}. 
For simplicity, let $K$ denote $K_X+\Delta$. 
Let $e_0\ge 1$ be an integer such that $(p^{e_0}-1)K$ is Cartier. 
Then $(p^e-1)K$ is Cartier for each $e\ge 1$ with $e_0|e$. 
Since $\bar A$ is sufficiently ample, the following hold: 
\begin{enumerate}
\item $\mathcal O_U((mK +A +N)|_U)$ is $g$-ample and relatively free over $V$ by Lemma~\ref{lem:rel free}. 
\item The morphism 
\begin{align*}
f_*\phi^{(e)}_{(X,\Delta)}(mK+A+N): 
& {F_Y^e}_*f_*\mathcal O_X\big(((m-1)p^e+1)K +p^e(A+N) \big) 
\\ & \to f_* \mathcal O_X(mK+A+N)
\end{align*}
is surjective over $V$. 
This follows from an argument similar to that of the proof of \cite[Corollary~2.23]{Pat14}. Note that $K|_U$ is $g$-nef and $(U,\Delta|_U)$ is $F$-pure. 
\item 
For every Cartier divisor $M$ on $U$, 
the sheaf $\mathcal O_U(M+l(A+N)|_U)$ is $0$-regular with respect to 
$\mathcal O_U((mK +A+N)|_U)$ and $g$ (\cite[Example~1.8.24]{Laz04I})
for $l\gg0$. 
\end{enumerate}
Let $q_e$ and $r_e$ are the quotient and the remainder of the division of $(m-1)p^e+1$ by $m$. 
Then by (3), we see that the natural morphism 
\begin{align*}
S^{q_e}(f_*\mathcal O_X(mK+A+N))
& \otimes 
f_*\mathcal O_X(r_eK +(p^e-q_e)(A+N))
\\ & \to f_*\mathcal O_X\big(((m-1)p^e+1)K +p^e(A+N) \big)
\end{align*}
is surjective over $V$ for each $e\gg1$ with $e_0|e$. 
Note that $r_eK$ is Cartier and $p^e-q_e \xrightarrow{e\to \infty} \infty$. 
Let $\mathcal F_{r_e}$ be the coherent sheaf on $\bar X$ 
such that $\mathcal F_{r_e}|_X \cong \mathcal O_X(r_eK)$. 
By Lemma~\ref{lem:glgen} and the ampleness of $\bar A+\bar N$, 
we see that 
$$
\bar f_*\mathcal F_{r_e}\left((p^e-q_e)
\left(\bar A+\bar N\right)\right) |_Y
\cong f_*\mathcal O_X(r_eK +(p^e-q_e)(A+N))
$$
is globally generated for $e\gg0$. 
Hence, we obtain the morphisms
\begin{align*}
& \bigoplus S^{q_e}(f_*\mathcal O_X(mK+A+N))
\\ & \cong S^{q_e}(f_*\mathcal O_X(mK+A+N)) 
\otimes \left(H^0(X, r_eK+(p^e-q_e)(A+N)) \otimes_k \mathcal O_Y \right)
\\ & \twoheadrightarrow S^{q_e}(f_*\mathcal O_X(mK+A+N))
\otimes f_*\mathcal O_X(r_eK +(p^e-q_e)(A+N))
\\ & \to f_*\mathcal O_X\big(((m-1)p^e+1)K +p^e(A+N) \big) 
\end{align*}
that are surjective over $V$. 
Let $\alpha^{(e)}$ denote the composite of these morphisms. 
Combining ${F_Y^e}_*\alpha^{(e)}$ and $f_*\phi^{(e)}_{(X,\Delta)}(mK+A+N)$, 
we get the morphism 
\begin{align*}
\bigoplus {F_Y^e}_*S^{q_e}(f_*\mathcal O_X(mK+A+N))
\to 
f_*\mathcal O_X(mK+A+N)
\end{align*}
that is surjective over $V$, 
for each $e\ge 1$ with $e_0|e$. 
Taking the tensor product with $\mathcal O_Y(lL)$, 
where $L$ is a Cartier divisor on $Y$ with $\mathcal O_Y(L)\cong \mathcal L$, 
we obtain 
$$
\bigoplus {F_Y^e}_* \big( S^{q_e}(f_*\mathcal O_X(mK+A+N)) (p^elL) \big)
\to 
f_*\mathcal O_X(mK+A+N) (lL)
$$
by the projection formula. 
Let $s$ be the smallest integer such that 
$$
f_*\mathcal O_X(mK+A+N)(sL)
$$
is globally generated over $V$. 
Then we have the following sequence of morphisms that are surjective over $V$:
\begin{align*}
& \bigoplus {F_Y^e}_* \mathcal O_Y((p^el -q_es)L)
\\ & \to \bigoplus {F_Y^e}_* \big( 
S^{q_e}(f_*\mathcal O_X(mK+A+N)(sL)) ((p^el-q_es)L) \big)
\\ & \cong \bigoplus {F_Y^e}_* \big( 
S^{q_e}(f_*\mathcal O_X(mK+A+N)) (p^elL) \big)
\\ & \to f_*\mathcal O_X(mK+A+N) (lL). 
\end{align*}
Now, we have 
$$
\frac{p^el-q_es}{p^e} 
\xrightarrow{e\to \infty} 
l -\frac{m-1}{m}s, 
$$
so if $l > \frac{m-1}{m}s +jn$, then ${F_Y^e}_*\mathcal O_Y((p^el-q_es)L)$ is 
globally generated by Lemma~\ref{lem:Frobenius glgen}, and hence 
$f_*\mathcal O_X(mK+A+N)(lL)$ is globally generated over $V$ 
by the above morphisms.  
Then by the choice of $s$, we get 
$$
s \le \frac{m-1}{m}s +jn +1, 
$$
which means that $s \le m(jn+1)$. 
Hence, if $l\ge m(jn+1)$, then 
$$
\frac{m-1}{m}s +jn 
\le \frac{m-1}{m}m(jn+1) +jn 
=(m-1)(jn+1) +jn
< m(jn+1) \le l, 
$$
so $f_*\mathcal O_X(mK+A+N)(lL)$ is globally generated over $V$, 
which is our assertion. 
\end{proof}
When the generic fiber has nef canonical divisor, 
by an argument similar to the above, we can prove the following theorem: 
\begin{thm} \label{thm:PS-type_generic} \samepage
Let $\bar f:\bar X \to \bar Y$, $f:X\to Y$, $\bar A$, $A$, $\bar{\mathcal L}$, 
$\mathcal L$ and $j$ be as in Theorem~\ref{thm:PS-type}. 
Assume that $X$ satisfies $S_2$ and $G_1$. 
Let $\Delta$ be an effective $\mathbb Z_{(p)}$-AC divisor on $X$. 
Suppose further that 
\begin{itemize}
\item $K_X+\Delta$ is $\mathbb Z_{(p)}$-Cartier, 
\item $(X_\eta,\Delta|_{X_\eta})$ is $F$-pure, 
	where $X_\eta$ is the generic fiber of $f$, and
\item $(K_X+\Delta)|_{X_\eta}$ is nef. 
\end{itemize}
Let $\bar N$ be a Cartier divisor on $\bar X$ such that 
$\bar A +\bar N$ is ample and $\bar N|_{X_\eta}$ is nef $($e.g. $N$ is nef$)$. 
Put $N:=\bar N|_X$. 
Let $m\ge1$ be an integer such that $m(K_X+\Delta)$ is Cartier. 
Then the sheaf 
$$
f_*\mathcal O_X(m(K_X+\Delta) +A +N) \otimes \mathcal L^l 
$$
is generically generated by its global sections for each $l\ge m(jn+1)$. 
\end{thm}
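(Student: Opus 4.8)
The plan is to imitate the proof of Theorem~\ref{thm:PS-type} step by step, replacing ``over $V$'' everywhere by ``at the generic point $\eta$ of $Y$'', equivalently ``over some dense open subset of $Y$''. Since a coherent sheaf on $Y$ that is globally generated over a dense open subset is generically globally generated, this will prove the theorem. In the proof of Theorem~\ref{thm:PS-type} the three ``sufficiently ample'' inputs were: relative freeness and $g$-ampleness of $\mathcal O_U\big((m(K_X+\Delta)+A+N)|_U\big)$; surjectivity of $f_*\phi^{(e)}_{(X,\Delta)}(m(K_X+\Delta)+A+N)$ over $V$; and a relative $0$-regularity feeding the symmetric-power surjections. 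In the present setting the first and third are needed only on the generic fibre $X_\eta$, where they are immediate: $(m(K_X+\Delta)+A+N)|_{X_\eta}$ is ample, being the sum of the nef divisor $m(K_X+\Delta)|_{X_\eta}$ and the ample divisor $(A+N)|_{X_\eta}$, and, $\bar A$ being sufficiently ample, it is globally generated; hence $0$-regularity and the symmetric-power surjections on $X_\eta$ follow from Serre and Keeler vanishing. Everything downstream of these inputs — the applications of Lemmas~\ref{lem:glgen} and~\ref{lem:Frobenius glgen}, the bound $s\le m(jn+1)$ on the smallest admissible twist, and the final numerical estimate — then carries over verbatim, producing global generation over a dense open subset of $Y$.

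Thus the only real content is the generic analogue of the middle input, namely that $f_*\phi^{(e)}_{(X,\Delta)}(m(K_X+\Delta)+A+N)$ is surjective at $\eta$. By flat base change along $\operatorname{Spec}K(Y)\to Y$, its stalk at $\eta$ is identified — after the Frobenius-base-change bookkeeping set up in Section~\ref{section:defn}, which is exactly what the relative Frobenius $F_{X/Y}^{(e)}$ and the maps $\phi^{(e)}_{(X/Y,\Delta)}$ are for, since the absolute Frobenius on $X$ does not commute with base change to $\operatorname{Spec}K(Y)$ — with the global-sections map of $\phi^{(e)}_{(X_\eta,\Delta|_{X_\eta})}$ twisted by $(m(K_X+\Delta)+A+N)|_{X_\eta}$. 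This map is surjective by the argument of \cite[Corollary~2.23]{Pat14} applied to the projective $K(Y)$-scheme $X_\eta$: the $F$-purity of $(X_\eta,\Delta|_{X_\eta})$ provides the splitting that one dualizes, and the nefness of $(K_X+\Delta)|_{X_\eta}$ supplies the Fujita-type (Keeler) vanishing that kills the obstruction cohomology. Since $k$ is $F$-finite, so is $K(Y)$, and this argument is valid over $K(Y)$.

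The main obstacle to be aware of is that one cannot shortcut the proof by spreading the generic-fibre hypotheses out and then quoting Theorem~\ref{thm:PS-type}: over a countable $F$-finite field, for instance $k=\mathbb F_p$, nefness need not be an open condition in a family of varieties, so ``$(K_X+\Delta)|_{X_\eta}$ nef'' does not in general produce a dense open $V\subseteq Y$ over which $(K_X+\Delta)|_{f^{-1}(V)}$ is nef — which is presumably why this is stated as a separate theorem. Hence the work is entirely internal to the argument, and the delicate step is the identification in the previous paragraph: correctly matching the stalk at $\eta$ of the pushed-forward absolute-Frobenius map with the corresponding map on $X_\eta$, and checking that Patakfalvi's vanishing argument survives over the non-perfect field $K(Y)$. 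Everything else is bookkeeping identical to the proof of Theorem~\ref{thm:PS-type}.
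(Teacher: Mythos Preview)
Your proposal is correct and follows essentially the same approach as the paper: the paper's proof likewise lists the three ``sufficiently ample'' inputs (1)--(3) from the proof of Theorem~\ref{thm:PS-type}, restated on the (geometric) generic fibre, and then declares that the remainder of the argument of Theorem~\ref{thm:PS-type} goes through with ``surjective over $V$'' replaced by ``generically surjective''. Your write-up is in fact more detailed than the paper's on the key point (2) --- the identification of the stalk of $f_*\phi^{(e)}_{(X,\Delta)}$ at $\eta$ with the $H^0$-map on $X_\eta$ via the relative Frobenius, and the observation that $K(Y)$ is $F$-finite --- and your remark on why spreading out fails (nefness not open over countable fields) is a helpful gloss that the paper omits.
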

\begin{proof}
For simplicity, let $K$ denote $K_X+\Delta$. 
Let $e_0\ge 1$ be an integer such that $(p^{e_0}-1)K$ is Cartier. 
Since $\bar A$ is sufficiently ample, the following hold:
\begin{enumerate}
\item $\mathcal O_{X_{\overline\eta}}\left((mK+A+N)|_{X_{\overline\eta}}\right)$
is ample and its complete linear system is free by Lemma~\ref{lem:rel free}. 
\item The morphism 
\begin{align*}
f_*\phi^{(e)}_{(X,\Delta)}(mK+A+N): 
& {F_Y^e}_*f_*\mathcal O_X\big(((m-1)p^e+1)K +p^e(A+N) \big) 
\\ & \to f_* \mathcal O_X(mK+A+N)
\end{align*}
is generically surjective for each $e\ge1$ with $e_0|e$. 
\item 
For every Cartier divisor $M$ on $X_{\overline\eta}$, 
the sheaf 
$
\mathcal O_{X_{\overline\eta}}\left(M+l(A+N)|_{X_{\overline\eta}}\right)
$ 
is $0$-regular with respect to 
$\mathcal O_U\left((mK +A+N)|_{X_{\overline\eta}}\right)$ 
for $l\gg0$. 
\end{enumerate}
From here on, we can prove the assertion, applying the same argument as that 
of the proof of Theorem~\ref{thm:PS-type}. 
Note that the morphisms used in the proof of Theorem~\ref{thm:PS-type} 
are generically smooth in this case. 
\end{proof}
Next, we prove Theorem~\ref{thm:relative}. 
To this end, we need the following proposition: 
\begin{prop} \label{prop:e-surjective}
Let $f:X\to Y$ be a flat projective surjective morphism 
from an equi-dimensional $k$-scheme $X$ satisfying $S_2$ and $G_1$ 
to a regular variety $Y$. 
Let $\Delta$ be an effective $\mathbb Z_{(p)}$-AC divisor on $X$. 
Let $W$ be a largest Gorenstein open subset of $X$. 
Suppose that 
\begin{itemize}
\item $K_X+\Delta$ is $\mathbb Z_{(p)}$-Cartier, 
\item the codimension of $(X\setminus W)|_{X_y}$ $($resp. $\mathrm{Supp}(\Delta)|_{X_y})$ in $X_y$ is at least two $($resp. one$)$ for every $y\in Y$, and 
\item $\left(X_{\overline y}, \overline{\Delta|_{W_{\overline y}}}\right)$ is $F$-pure for every $y\in Y$, where $X_{\overline y}$ is the geometric fiber of $f$ over $y$ and $\overline{\Delta|_{W_{\overline y}}}$ is the $\mathbb Z_{(p)}$-AC divisor on $X_{\overline y}$ obtained as the unique extension of $\Delta|_{W_{\overline y}}$. 
\end{itemize}
Let $e_0\ge 1$ be the smallest integer such that 
$(p^{e_0}-1)(K_X+\Delta)$ is Cartier. 
Let $A$ be an $f$-ample Cartier divisor on $X$. 
Then there exists an integer $m_0\ge 1$ such that 
\begin{align*}
& {f_{Z}}_*\phi_{(X_Z, \Delta_Z)}^{(e)}(mA_Z +N_Z):
\\ & {F_Z^e}_*{f_Z}_*\mathcal O_{X_Z}(
(1-p^e)(K_{X_Z}+\Delta_Z) +p^e(mA_Z+N_Z) )
\to {f_Z}_*\mathcal O_{X_Z}(mA_Z+N_Z)
\end{align*}
is surjective for every flat morphism $\pi:Z\to Y$ from a regular variety $Z$, 
for each $m\ge m_0$, $e\ge1$ with $e_0|e$,  
and for every $f$-nef Cartier divisor $N$ on $X$. 
\end{prop}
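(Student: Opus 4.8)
The plan is to reduce the assertion to a uniform statement about surjectivity of the trace maps $\phi^{(e)}_{(X,\Delta)}$ over all regular bases $Z \to Y$ simultaneously, and then to control this surjectivity by combining the $F$-purity hypothesis on the geometric fibers with Serre/Castelnuovo--Mumford vanishing on the fibers. First I would recall that, since $f$ is flat and $A$ is $f$-ample, by Lemma~\ref{lem:rel free} there is an $m_0 \ge 1$ such that $\mathcal O_X(mA + N)$ is relatively globally generated over $Y$ for all $m \ge m_0$ and all $f$-nef $N$; moreover the formation of $f_*\mathcal O_X(mA+N)$, and of its pullback, commutes with arbitrary base change $Z \to Y$ for $m$ large (this uses flatness of $f$, flatness of $\pi$, and cohomology and base change applied to the vanishing $R^{>0}f_*\mathcal O_X(mA+N-(\text{stuff}))=0$). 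Since everything we need holds simultaneously for a whole family of $f$-nef divisors via Keeler's relative Fujita vanishing, the choice of $m_0$ can be made independent of $N$ and of $Z$.

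Next, the key point is that $\phi^{(e)}_{(X_Z,\Delta_Z)}(mA_Z+N_Z)$ is, by construction (the $\mathcal Hom(?,\mathcal O_{X_Z}(mA_Z+N_Z))$-dual of $\mathcal O_{X_Z} \to {F^{(e)}_{X_Z/Z}}_*\mathcal O_{X_Z}((p^e-1)\Delta_Z)$), obtained by base change from the corresponding map over $Y$ — here I would use that $W$ has complement of fiberwise codimension $\ge 2$ and $\operatorname{Supp}\Delta$ has fiberwise codimension $\ge 1$, so that $\mathcal O(K_{X/Y}+\Delta)$, $\mathcal O((p^e-1)\Delta)$ and the relevant reflexive sheaves all commute with the base changes $X \to X_{\overline y}$ and $X_Z \to X$ in codimension one, which is enough for $S_2$ sheaves. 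Pushing forward by the flat map $f$ (resp.\ $f_Z$) and invoking cohomology and base change as above, surjectivity of ${f_Z}_*\phi^{(e)}_{(X_Z,\Delta_Z)}(\cdots)$ over $Z$ follows from surjectivity of ${F^{(e)}_{X/Y}}_*\mathcal O_X((p^e-1)\Delta) \to \mathcal O_{X_{Y^e}}$ after applying $\mathcal Hom(?,\mathcal O(mA+N))$ and $f_*$ — and that last surjectivity can be checked fiberwise over $Y^e$, i.e.\ on the geometric fibers $X_{\overline y}$, where it is exactly the statement that $\phi^{(e)}_{(X_{\overline y}, \overline{\Delta|_{W_{\overline y}}})}(mA_{\overline y}+N_{\overline y})$ is surjective. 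This is where the $F$-purity of $(X_{\overline y},\overline{\Delta|_{W_{\overline y}}})$ enters: $F$-purity says $\mathcal O_{X_{\overline y}} \to {F^{e}_{X_{\overline y}}}_*\mathcal O_{X_{\overline y}}((p^e-1)\Delta|_{\overline y})$ splits locally, hence the dualized-and-twisted map $\phi^{(e)}_{(X_{\overline y},\cdots)}(M)$ is surjective for every Cartier $M$; combining with relative global generation of $\mathcal O(mA+N)$ on the fiber (which makes $mA_{\overline y}+N_{\overline y}$ the kind of $M$ for which fiberwise surjectivity passes to the pushforward) gives what we want. As in the cited \cite[Corollary~2.23]{Pat14} and \cite[Corollary~2.23]{Pat14}-type arguments, one needs $mA+N$ ample enough on the fibers that $h^0$ on fibers is computed correctly and that no higher cohomology obstructs the surjectivity of $f_*$; increasing $m_0$ (uniformly, again via relative Fujita vanishing) handles this.

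Finally, I would assemble these pieces: choose $m_0$ large enough to simultaneously guarantee (i) relative global generation of $\mathcal O_X(mA+N)$ over $Y$ and, after any base change, over $Z$, (ii) the commutation of all relevant pushforwards with base change, and (iii) the fiberwise ampleness needed for the $F$-purity argument to upgrade local splitting into surjectivity of the pushed-forward trace map — all for $m \ge m_0$, all $e$ with $e_0 \mid e$, all $f$-nef $N$, and all flat $\pi: Z \to Y$ from regular $Z$. The main obstacle I anticipate is the bookkeeping in step two: verifying carefully that $\phi^{(e)}_{(X_Z,\Delta_Z)}$ really is the base change of $\phi^{(e)}_{(X,\Delta)}$ in a way compatible with pushing forward, given that $X$ is only $S_2$ and $G_1$ rather than Gorenstein, so that the identifications $\mathcal O_{X_Z}(K_{X_Z}+\Delta_Z) \cong (\mathcal O_X(K_{X/Y}+\Delta))_Z$ and the behavior of $\mathcal O((p^e-1)\Delta)$ under base change must be justified only in codimension one on each fiber and then extended by reflexivity — this is exactly why the fiberwise codimension hypotheses on $X \setminus W$ and $\operatorname{Supp}\Delta$ are in the statement, and keeping that reduction honest is the delicate part.
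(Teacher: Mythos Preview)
There is a genuine gap. You identify $\phi^{(e)}_{(X_Z,\Delta_Z)}(mA_Z+N_Z)$ as the dual of $\mathcal O_{X_Z}\to {F_{X_Z/Z}^{(e)}}_*\mathcal O_{X_Z}((p^e-1)\Delta_Z)$, but by the paper's conventions this map is built from the \emph{absolute} Frobenius $F_{X_Z}^e$, not the relative one: note that the source in the proposition carries $K_{X_Z}$ rather than $K_{X_Z/Z}$, and the pushforward is wrapped in ${F_Z^e}_*$. The absolute Frobenius of $X_Z$ is \emph{not} obtained by base change from $X$ along $\pi:Z\to Y$, so your claim that $\phi^{(e)}_{(X_Z,\Delta_Z)}$ ``is obtained by base change from the corresponding map over $Y$'' is false as stated, and the subsequent fiberwise check does not directly control the global map on $Z$.

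The paper's proof supplies exactly the missing structural step: it factors $F_{X_Z}^e$ as the relative Frobenius $F_{X_Z/Z}^{(e)}$ followed by the base change $w^{(e)}:X_{Z^e}\to X_Z$ of $F_Z^e$, which splits $\phi^{(e)}_{(X_Z,\Delta_Z)}(M_Z)$ into $\beta^{(e)}\circ\alpha^{(e)}$. The piece $\alpha^{(e)}$ is (after unwinding) ${w^{(e)}}_*\phi^{(e)}_{(X_Z/Z,\Delta_Z)}$, and \emph{this} relative trace genuinely is the flat pullback of $\phi^{(e)}_{(X/Y,\Delta)}$ along $\pi$; its surjectivity after $f_{Z*}$ then follows from surjectivity over $Y$ via \cite[Lemma~3.7~(2)]{Eji19p}, which is where your fiberwise $F$-purity argument correctly lives. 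The second piece $\beta^{(e)}$ is governed entirely by the Frobenius of the base $Z$ and is surjective precisely because $Z$ is regular (hence $F$-split), which is why regularity of $Z$ appears in the hypotheses. Your outline conflates these two contributions; without separating them you cannot explain how the ${F_Z^e}_*$ on the source interacts with restriction to a fiber, nor why regularity of $Z$ is needed at all.
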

\begin{proof}
Let us consider the following commutative diagram: 
$$
\xymatrix{
	(X_{Z})^{e} \ar[rd]^-{F_{X_Z}^e} \ar[d]_-{F_{X_Z/Z}^{(e)}} & \\
	X_{Z^e} \ar[r]_-{w^(e)} \ar[d]_-{f_{Z^e}} & X_Z \ar[d]^-{f_Z} \\ 
	Z^e \ar[r]_-{F_Z^e} & Z
}
$$
Since $\mathcal O_{X_Z} \to {F_{X_Z}^e}_*\mathcal O_{X_Z}$ decomposes into 
$$
\mathcal O_{X_Z} 
\to w^{(e)}_* \mathcal O_{X_{Z^e}}
\to {F_{X_Z}^e}_* \mathcal O_{X_Z}, 
$$
we see that $\phi_{(X_Z,\Delta_Z)}^{(e)}(M_Z)$, where $M:=mA+N$, 
decomposes into 
\begin{align*}
\phi_{(X_Z,\Delta_Z)}^{(e)}(M_Z): 
& {F_{X_Z}^e}_*\mathcal O_{X_Z}((1-p^e)(K_{X_Z}+\Delta) +p^eM_Z)
\\ & \xrightarrow{\alpha^{(e)}} 
{w^{(e)}}_* \big( {f_{Z^e}}^* \mathcal O_Z((1-p^e)K_Z) (M_{Z^e})\big)
\xrightarrow{\beta^{(e)}} \mathcal O_{X_Z}(M_Z). 
\end{align*}
Here, 
$
\alpha^{(e)}={w^{(e)}}_* \phi^{(e)}_{(X_Z/Z,\Delta_Z)}( (1-p^e){f_{Z^e}}^*K_Z+M_{Z^e}). 
$
Note that, since $f$ is flat, we have  
\begin{align*}
\mathcal Hom({w^{(e)}}_*\mathcal O_{X_{Z^e}}, \mathcal O_{X_Z}(M_Z))
& \cong \mathcal Hom({w^{(e)}}_*\mathcal O_{X_{Z^e}}, \mathcal O_{X_Z})(M_Z)
\\ & \cong \mathcal Hom({f_Z}^*{F_Z^e}_*\mathcal O_Z, {f_Z}^*\mathcal O_Z)(M_Z)
\\ & \cong ({f_Z}^*\mathcal Hom({F_Z^e}_*\mathcal O_Z, \mathcal O_Z) )(M_Z)
\\ & \cong \big( {f_Z}^*{F_Z^e}_* \mathcal O_Z((1-p^e)K_Z) \big) (M_Z)
\\ & \cong \big( {w^{(e)}}_* {f_{Z^e}}^* \mathcal O_Z((1-p^e)K_Z) \big) (M_Z)
\\ & \cong {w^{(e)}}_* \big( {f_{Z^e}}^* \mathcal O_Z((1-p^e)K_Z) (M_{Z^e})\big)
\end{align*}
by the Grothendieck duality. 
We prove that both ${f_Z}_*\alpha^{(e)}$ and ${f_Z}_*\beta^{(e)}$ are surjective. 
By the projection formula, we have 
\begin{align*}
{f_Z}_*\alpha^{(e)}
& ={f_Z}_*{w^{(e)}}_* \phi^{(e)}_{(X_Z/Z,\Delta_Z)}((1-p^e){f_{Z^e}}^*K_Z+M_{Z^e}) 
\\ & \cong {F_Z^e}_*{f_{Z^e}}_* 
\phi^{(e)}_{(X_Z/Z,\Delta_Z)}((1-p^e){f_{Z^e}}^*K_Z+M_{Z^e}) 
\\ & \cong {F_Z^e}_* \big( {f_{Z^e}}_*\phi^{(e)}_{(X_Z/Z,\Delta_Z)}(M_{Z^e}) \big) 
((1-p^e)K_Z), 
\end{align*}
so it is enough to show that ${f_{Z^e}}_* \phi^{(e)}_{(X_Z/Z,\Delta_Z)}(M_{Z^e})$ 
is surjective. Note that since $F_Z^e$ is finite, the push-forward of 
a surjective morphism is also surjective. 
Since each square of the commutative diagram 
$$
\xymatrix{
(X_Z)^e \ar[r]^-{(\pi_X)^{(e)}} \ar[d]_-{F_{X_Z/Z}^{(e)}} & X^e \ar[d]^-{F_{X/Y}^{(e)}}  \\
X_{Z^e} \ar[r]^-{\left(\pi^{(e)}\right)_X} \ar[d]_-{f_{Z^e}} & X_{Y^e} \ar[d]^-{f_{Y^e}} \\ 
Z^e \ar[r]_-{\pi^{(e)}} & Y^e 
}
$$
is cartesian and $\pi$ is flat, we have 
\begin{align*}
{f_{Z^e}}_*\phi^{(e)}_{(X_Z/Z,\Delta_Z)}(M_{Z^e})
& \cong {f_{Z^e}}_* {\left(\pi^{(e)}\right)_X}^* \phi^{(e)}_{(X/Y,\Delta)}(M_{Y^e})
\\ & \cong {\pi^{(e)}}^* {f_{Y^e}}_* \phi^{(e)}_{(X/Y,\Delta)}(M_{Y^e}). 
\end{align*}
By \cite[Lemma~3.7~(2)]{Eji19p}, we see that there is an integer $m_0\ge 1$ 
such that 
$$
\phi^{(e)}_{(X/Y,\Delta)}(mA_{Y^e}+N_{Y^e})
$$ 
is surjective for each $m\ge m_0$ and every $f$-nef Cartier divisor $N$ on $X$, 
and hence so is ${f_{Z^e}}_*\phi^{(e)}_{(X_Z/Z,\Delta_Z)}(M_{Z^e})$. 
Next, we show that ${f_Z}_*\beta^{(e)}$ is surjective. Since 
$$
{w^{(e)}}_* \big( {f_{Z^e}}^*\mathcal O_Z((1-p^e)K_Z)(M_{Z^e}) \big)
\cong \big({f_Z}^*\mathcal Hom({F_Z^e}_*\mathcal O_Z, \mathcal O_Z) \big)(M_Z) 
$$
by the isomorphisms mentioned above, we get the commutative diagram 
$$
\xymatrix{
{f_Z}_* \big({f_Z}^*
\mathcal Hom({F_Z^e}_*\mathcal O_Z, \mathcal O_Z) \big)(M_Z) 
\ar[r]^-{{f_Z}_*\beta^{(e)}} & {f_Z}_* \mathcal O_{X_Z}(M_Z) \\ 
\mathcal Hom({F_Z^e}_*\mathcal O_Z,\mathcal O_Z) \otimes 
{f_Z}_* \mathcal O_{X_Z}(M_Z) \ar[r] \ar[u]^-\cong 
& {f_Z}_* \mathcal O_{X_Z}(M_Z) \ar@{=}[u].} 
$$
Here, the left vertical morphism is an isomorphism by the projection formula. 
Therefore, it is enough to prove that 
$
\mathcal Hom({F_Z^e}_*\mathcal O_Z) \to \mathcal O_Z
$
is surjective. The morphism is obtained as the dual of the Frobenius
morphism $\mathcal O_Z\to {F_Z^e}_* \mathcal O_Z$, 
which locally splits since $Z$ is regular, 
so the claim follows. 
\end{proof}
\begin{thm} \label{thm:relative} \samepage
Let $\bar f:\bar X\to \bar Y$ be a surjective morphism 
from a projective equi-dimensional $k$-scheme $\bar X$ 
to a projective variety $\bar Y$. 
Let $Y$ be a normal dense open subset of $\bar Y$ and 
set $X:=\bar f^{-1}(Y)$ and $f:=\bar f|_X:X\to Y$. 
Let $\bar A$ be a sufficiently ample Cartier divisor on $\bar X$ 
and put $A:=\bar A|_X$. 
Assume that $X$ satisfies $S_2$ and $G_1$. 
Let $\Delta$ be an effective $\mathbb Z_{(p)}$-AC divisor on $X$. 
Let $V$ be a dense open subset of $Y$ and set $U:=f^{-1}(V)$. 
Let $W$ be the largest Gorenstein open subset of $U$. 
Suppose further that 
\begin{itemize}
\item $K_X+\Delta$ is $\mathbb Z_{(p)}$-Cartier, 
\item $K_U+\Delta|_U$ is nef over $V$, 
\item $V$ is regular, 
\item $g:=f|_U:U\to V$ is flat, 
\item the codimension of $(U\setminus W)|_{U_v}$ $($resp. $\mathrm{Supp}(\Delta)|_{U_v})$ in $U_v$ is at least two $($resp. one$)$ for every $v\in V$, and 
\item $\left(U_{\overline v}, \overline{\Delta|_{W_{\overline v}}}\right)$ is $F$-pure for every $v\in V$, where $W_{\overline v}$ is the geometric fiber of $W$ over $v$ and $\overline{\Delta|_{W_{\overline v}}}$ is the $\mathbb Z_{(p)}$-AC divisor on $U_{\overline v}$ obtained as the unique extension of $\Delta|_{W_{\overline v}}$. 
\end{itemize}
Let $\bar N$ be a Cartier divisor on $\bar X$ such that 
$\bar A+\bar N$ is ample and $\bar N|_U$ is nef over $V$. Put $N:=\bar N|_X$. 
Let $m\ge 1$ be an integer such that $m(K_X+\Delta)$ is Cartier. 
Then 
$$
f_*\mathcal O_X(m(K_X+\Delta)+A+N) \otimes \omega_Y^{[-m]}
$$
is weakly positive over $V$. 
Here, $\omega_Y^{[-m]}:=\mathcal Hom(\omega_Y^{\otimes m}, \mathcal O_Y)$. 
\end{thm}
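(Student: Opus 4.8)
The plan is to reduce the statement to Theorem~\ref{thm:PS-type} by Viehweg's fiber product method, using Proposition~\ref{prop:e-surjective} to supply the positive-characteristic input (surjectivity of the relative Frobenius trace maps) on the fiber products. Write $\mathcal E:=f_*\mathcal O_X(m(K_X+\Delta)+A+N)$, so that the sheaf to be studied is $\mathcal G:=\mathcal E\otimes\omega_Y^{[-m]}$; by the projection formula $\mathcal G|_V\cong g_*\mathcal O_U\big(m(K_{U/V}+\Delta|_U)+(A+N)|_U\big)$, so the twist by $\omega_Y^{[-m]}$ has turned the problem into weak positivity over $V$ of a relative pushforward, the genuine Viehweg setting. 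Let $\mathcal G'$ be the torsion-free quotient of $\mathcal G$; by Lemma~\ref{lem:rel free} and cohomology and base change (recall $V$ is regular and $g$ is flat), $\mathcal E|_V$, hence $\mathcal G|_V=\mathcal G'|_V$, is locally free. By Remark~\ref{rem:weak pos} we may choose the ample line bundle occurring in Definition~\ref{defn:positivity}(3) freely, so it suffices to produce one ample line bundle $\mathcal H$ on $Y$ for which $(S^{r}(\mathcal G'))^{\vee\vee}\otimes\mathcal H$ is globally generated over $V$ for every $r\ge1$ (then take $\beta=1$ in Definition~\ref{defn:positivity}(3)).

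Fix $r\ge1$, form the $r$-fold fiber product $U\times_V\cdots\times_V U$, and let $\bar f^{(r)}\colon\bar X^{(r)}\to\bar Y$ be an $S_2$-ification of a projective compactification of it over $\bar Y$ on which the relevant divisors remain $\mathbb Z_{(p)}$-Cartier; set $X^{(r)}:=(\bar f^{(r)})^{-1}(Y)$, $f^{(r)}:=\bar f^{(r)}|_{X^{(r)}}$, $U^{(r)}:=(f^{(r)})^{-1}(V)=U\times_V\cdots\times_V U$ and $g^{(r)}:=f^{(r)}|_{U^{(r)}}$. Put $\Delta^{(r)}:=\sum_i\mathrm{pr}_i^*(\Delta|_U)$ (and its closure on $X^{(r)}$), let $\bar A^{(r)}$ be the restriction of the exterior sum of $\bar A$ (which equals $\sum_i\mathrm{pr}_i^*\bar A$ on $U^{(r)}$ and is sufficiently ample on $\bar X^{(r)}$), and let $\bar N^{(r)}$ be the exterior sum of $\bar N$. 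The hypotheses on $g$ imposed in the theorem are arranged precisely so that $\bar f^{(r)}$, equipped with the data $\Delta^{(r)},\bar A^{(r)},\bar N^{(r)}$ and an auxiliary ample line bundle $\bar{\mathcal L}$ on $\bar Y$, is a legitimate input for Theorem~\ref{thm:PS-type}: $g^{(r)}$ is again flat; its geometric fibers are $r$-fold products over a field of the geometric fibers of $g$, hence are again $S_2$ and $G_1$ with bad loci of the required codimensions, and are $F$-pure (a product over a field of geometrically $F$-pure schemes is $F$-pure, and the hypotheses grant geometric $F$-purity of the fibers of $g$); $K_{U^{(r)}/V}+\Delta^{(r)}|_{U^{(r)}}=\sum_i\mathrm{pr}_i^*(K_{U/V}+\Delta|_U)$ is nef over $V$; and, by the canonical bundle formula for fiber products, $m(K_{X^{(r)}}+\Delta^{(r)})=\sum_i\mathrm{pr}_i^*\big(m(K_X+\Delta)\big)-(r-1)(f^{(r)})^*(mK_Y)$ on the Gorenstein locus. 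The surjectivity of the relative Frobenius trace maps on $X^{(r)}$ needed to run the argument of Theorem~\ref{thm:PS-type} in this generality is exactly what Proposition~\ref{prop:e-surjective} delivers, its hypotheses being the fiberwise conditions inherited by the fiber products.

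Applying Theorem~\ref{thm:PS-type} to $\bar f^{(r)}$ with $\bar A^{(r)},\bar N^{(r)},\Delta^{(r)}$ and $\bar{\mathcal L}$ (the base $\bar Y$, hence $n$ and the integer $j$, being unchanged) shows that $f^{(r)}_*\mathcal O_{X^{(r)}}\big(m(K_{X^{(r)}}+\Delta^{(r)})+A^{(r)}+N^{(r)}\big)\otimes\mathcal L^{l}$ is globally generated over $V$ for every $l\ge m(jn+1)$; take $l=m(jn+1)$. By the canonical bundle identity above and the projection formula this pushforward equals $f^{(r)}_*\mathcal O_{X^{(r)}}\big(\sum_i\mathrm{pr}_i^*(m(K_X+\Delta)+A+N)\big)\otimes\omega_Y^{[-m(r-1)]}$, and by flat base change together with the K\"unneth formula (valid because $g$ is flat) its restriction to $V$ is $(\mathcal E|_V)^{\otimes r}\otimes\omega_V^{-m(r-1)}=(\mathcal G|_V)^{\otimes r}\otimes\omega_V^{m}$. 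Hence $(\mathcal G|_V)^{\otimes r}\otimes\big(\omega_Y^{[m]}\otimes\mathcal L^{m(jn+1)}\big)$ is globally generated over $V$, and composing with the natural surjection $(\mathcal G')^{\otimes r}\twoheadrightarrow(S^{r}(\mathcal G'))^{\vee\vee}$ over $V$ (where $\mathcal G'$ is locally free) we conclude that $(S^{r}(\mathcal G'))^{\vee\vee}\otimes\mathcal H$ is globally generated over $V$ for the fixed sheaf $\mathcal H:=\omega_Y^{[m]}\otimes\mathcal L^{m(jn+1)}$ and every $r\ge1$. For $\bar{\mathcal L}$ sufficiently ample $\mathcal H$ is ample; since $Y$ is normal its non-Gorenstein locus has codimension $\ge2$ and $V$ lies in the regular locus, so $\mathcal H$ may be replaced by an honest ample line bundle agreeing with it on $V$ (or one passes to a resolution of $Y$, which does not affect weak positivity over the regular open $V$). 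By the reduction of the first paragraph, $\mathcal G$ is weakly positive over $V$.

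The main obstacle lies in the second paragraph: showing that iterated fiber products of the merely $S_2$, $G_1$ and fiberwise $F$-pure family $g$ are genuine inputs for Theorem~\ref{thm:PS-type} and Proposition~\ref{prop:e-surjective}, namely stability of $S_2$, $G_1$, the codimension conditions on the fibers and, above all, of $F$-purity of the geometric fibers under fiber products over the regular base $V$, together with the canonical bundle computation in this singular relative situation and the choice of a compactification on which the relevant divisors stay $\mathbb Z_{(p)}$-Cartier; and the fact that the K\"unneth identification of $f^{(r)}_*\mathcal O_{X^{(r)}}\big(\sum_i\mathrm{pr}_i^*(\cdots)\big)|_V$ with $(\mathcal E|_V)^{\otimes r}$ is surjective over \emph{all} of $V$, not merely generically, which is precisely what the flatness hypothesis on $g$ secures. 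Everything else is routine Viehweg bookkeeping, now available in characteristic $p$ thanks to the global generation machinery (Theorem~\ref{thm:PS-type}, Proposition~\ref{prop:e-surjective}, Lemma~\ref{lem:Frobenius glgen}) established earlier in this section.
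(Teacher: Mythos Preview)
Your approach via Viehweg's fiber product method is genuinely different from the paper's proof. The paper does \emph{not} form fiber products $X^{(r)}$; instead it uses base change along the iterated Frobenius $F_Y^d\colon Y^d\to Y$. After shrinking $Y$ to be regular (legitimate since weak positivity is insensitive to codimension~$\ge 2$ loci), for a given $\alpha\ge1$ one chooses $d$ with $p^d\ge\alpha+1$ and applies Theorem~\ref{thm:PS-type} to $f_{Y^d}\colon X_{Y^d}\to Y^d$. Proposition~\ref{prop:e-surjective}, which is formulated precisely for flat base changes $Z\to Y$ such as the Frobenius, supplies the needed trace-map surjectivity, and \cite[Lemma~6.6]{Eji19d} gives $S_2$ and $G_1$ for $X_{Y^d}$. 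The output is that ${F_Y^d}^*\mathcal G\otimes\mathcal H$ is globally generated over $V^d$, where $\mathcal H:=\omega_Y^m\otimes\mathcal L^{m(n+1)}$. A push--pull trick with $({F_Y^d}_*\mathcal O_Y)\otimes({F_Y^d}_*\mathcal O_Y)^\vee$ then descends this to global generation of $S^{\alpha\beta}(\mathcal G)\otimes\mathcal H^\beta$ over $V$ for $\beta=lp^d$.

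What each route buys: your fiber-product argument would, if carried through, give the stronger conclusion $\beta=1$ uniformly in $\alpha$; but its cost is constructing for every $r$ a projective equi-dimensional compactification $\bar X^{(r)}$ with $X^{(r)}=(\bar f^{(r)})^{-1}(Y)$ satisfying $S_2$ and $G_1$ and with $K_{X^{(r)}}+\Delta^{(r)}$ remaining $\mathbb Z_{(p)}$-Cartier. In characteristic~$p$, without resolution of singularities, this is not routine, and you do not explain how to achieve it. The paper's Frobenius base change sidesteps all of this, since $X_{Y^d}$ is a single flat pullback of the given $X$ and inherits its properties directly. Note also that your appeal to Proposition~\ref{prop:e-surjective} is slightly misplaced: that result concerns base changes $Z\to Y$ of the \emph{fixed} morphism $f$, not new morphisms $X^{(r)}\to Y$; the trace-map surjectivity you need on $U^{(r)}$ must instead be deduced from the product decomposition of the relative Frobenius $F_{U^{(r)}/V}^{(e)}$ combined with K\"unneth. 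In short, the Frobenius base change is the natural positive-characteristic substitute for Viehweg's fiber product, and is what the paper exploits.
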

We have the following commutative diagram whose each square is cartesian: 
$$
\xymatrix{
W \ar@{^(->}[r] & U \ar[d]_-g \ar@{^(->}[r] 
& X \ar[d]^-f \ar@{^(->}[r] 
& \bar X \ar[d]^-{\bar f} \\
& V \ar@{^(->}[r]
& Y \ar@{^(->}[r] 
& \bar Y. 
}
$$
\begin{proof}
Since the weak positivity of a coherent sheaf $\mathcal G$ on $Y$ is 
equivalent to that of $\mathcal G|_{Y'}$, where $Y'$ is an open subset of $Y$ 
with $\mathrm{codim}_Y(Y\setminus Y') \ge 2$, 
we may assume that $Y$ is regular by shrinking $Y$. 
Set $\mathcal G:=f_*\mathcal O_X(M(K_{X/Y}+\Delta)+A+N)$. 
Let $\bar{\mathcal L}$ be an ample line bundle on $\bar Y$ 
with $|\bar{\mathcal L}|$ free, 
put $\mathcal L:=\bar{\mathcal L}|_Y$ 
and $\mathcal H:=\omega_Y^m \otimes \mathcal L^{m(n+1)}$.  
Replacing $\bar{\mathcal L}$ by its power, 
we may assume that $\mathcal H$ is ample and $|\mathcal H|$ is free.
Take an integer $\alpha\ge 1$. 
We prove that there is a $\beta \ge 1$ such that 
$$
S^{\alpha\beta}(\mathcal G) \otimes \mathcal H^\beta
$$
is globally generated over $V$. 
Take $d\ge 1$ so that $p^d\ge\alpha+1$. 
We consider the morphism $f_{Y^d}:X_{Y^d}\to Y^d$. 
Since $\bar A$ is sufficiently ample, we see that 
\begin{itemize}
\item the morphism 
\begin{align*}
& {f_{Y^d}}_*\phi^{(e)}_{(X_{Y^d},\Delta_{Y^d})}
\left(m\left(K_{X_{Y^d}}+\Delta_{Y^d}\right)+(A+N)_{Y^d}\right):
\\ & {F_{Y^d}^e}_* {f_{Y^d}}_* \mathcal O_{X^d}\left(
((m-1)p^e+1)\left(K_{X_{Y^d}}+\Delta_{Y^d}\right) +p^e(A+N)_{Y^d}\right)
\\ & \to {f_{Y^d}}_* \mathcal O_{X_{Y^d}}
\left(m \left(K_{X_{Y^d}}+\Delta_{Y^d} \right) +(A+N)_{Y^d} \right)
\end{align*}
is surjective over $V^d\subseteq Y^d$ for each $e\ge 1$ such that 
$(p^e-1)(K_X+\Delta)$ is Cartier 
by Proposition~\ref{prop:e-surjective}, 
\item the invertible sheaf 
\begin{align*}
& \mathcal O_{X_{Y^d}}\left(
m\left( K_{X_{Y^d}}+\Delta_{Y^d}\right)+(A+N)_{Y^d} \right)
\\ & \cong \mathcal O_{X_{Y^d}}\left( m\left( K_{X_{Y^d}/Y^d}+\Delta_{Y^d}\right)+(A+N)_{Y^d} +m{f_{Y^d}}^*K_{Y^d} \right)
\\ & \cong \big( \mathcal O_X(m(K_{X/Y}+\Delta) +A+N ) \big)_{Y^d} 
\otimes {f_{Y^d}}^* \omega_{Y^d}^m
\end{align*}
is relatively free over $V^d$ by Lemma~\ref{lem:rel free}, and  
\item the sheaf $\mathcal G$ is locally free over $V$. 
This follows from Keeler's relative Fujita vanishing \cite[Theorem~1.5]{Kee03}, 
the cohomology and base change, and the fact that the Euler characteristic of flat coherent sheaf is constant on fiber. 
\end{itemize}
Furthermore, by \cite[Lemma~6.6]{Eji19d}, 
we see that $X_{Y^d}$ satisfies $S_2$ and $G_1$. 
Therefore, we can apply Theorem~\ref{thm:PS-type} to $f_{Y^d}:X_{Y^d}\to Y^d$, 
from which we obtain that 
\begin{align*}
& \big( {f_{Y^d}}_*\mathcal O_{X_{Y^d}}(m(K_{X_{Y^d}} +\Delta_{Y^d}) 
+(A+N)_{Y^d}) \big) 
\otimes \mathcal L^{m(n+1)}
\\ \cong & \big( {f_{Y^d}}_* 
\mathcal O_{X_{Y^d}}(m(K_{X_{Y^d}/Y^d} +\Delta_{Y^d}) +(A+N)_{Y^d}) \big) 
\otimes \omega_Y^{m} \otimes \mathcal L^{m(n+1)}
\\ \cong & \big( {F_Y^d}^* f_*\mathcal O_X(m(K_{X/Y}+\Delta) +A+N) \big) 
\otimes \mathcal H
\\ = & {F_Y^d}^*\mathcal G \otimes \mathcal H
\end{align*}
is globally generated over $V^d$, 
where the isomorphism in the third line follows from the flatness of $F_Y^d$. 
Put $\mathcal E:=({F_Y^d}_*\mathcal O_Y) \otimes ({F_Y^d}_*\mathcal O_Y)^\vee$. 
Let $l\ge 1$ be an integer such that $\mathcal E\otimes \mathcal H^l$ 
is globally generated. 
Since the sheaf 
$$
S^{\alpha l p^d}\left( {F_Y^d}^*\mathcal G \otimes \mathcal H \right) 
\cong \left({F_Y^d}^*S^{\alpha l p^d}(\mathcal G)\right) 
\otimes \mathcal H^{\alpha l p^d} 
\cong {F_Y^d}^*\left( S^{\alpha l p^d}(\mathcal G) \otimes \mathcal H^{\alpha l} 
\right) 
$$
is globally generated over $V^d$, we have the morphism 
$$
\bigoplus \mathcal O_{Y}
\to 
{F_Y^d}^*\left( S^{\alpha l p^d}(\mathcal G) \otimes \mathcal H^{\alpha l} 
\right) 
$$
that is surjective over $V^d$.  Applying 
$
\left( {F_Y^d}_*(?) \right) \otimes \left({F_Y^d}_*\mathcal O_Y\right)^\vee 
\otimes \mathcal H^l
$
to this morphism, we get the morphisms 
\begin{align*}
\bigoplus \mathcal E \otimes \mathcal H^l 
& \to \left( {F_Y^d}_*{F_Y^d}^*\left( 
S^{\alpha l p^d}(\mathcal G) \otimes \mathcal H^{\alpha l} \right) \right) 
\otimes \left({F_Y^d}_*\mathcal O_Y \right)^\vee \otimes \mathcal H^l
\\ & \cong S^{\alpha l p^d}(\mathcal G) \otimes \mathcal H^{\alpha l} 
\otimes \left({F_Y^d}_*\mathcal O_Y\right) 
\otimes \left({F_Y^d}_*\mathcal O_Y \right)^\vee \otimes \mathcal H^l
\\ & \to
S^{\alpha l p^d}(\mathcal G) \otimes \mathcal H^{(\alpha+1)l} 
\end{align*}
by the projection formula, which are surjective over $V$. 
Note that the last morphism follows from the evaluation map 
$
\left({F_Y^d}_*\mathcal O_Y \right) 
\otimes \left({F_Y^d}_*\mathcal O_Y\right)^\vee \to \mathcal O_Y,
$ 
which is surjective over $V$, as ${F_Y^d}_*\mathcal O_Y$ is locally free over $V$. 
Since $\mathcal E \otimes \mathcal H^l$ is globally generated, 
$S^{\alpha l p^d}(\mathcal G) \otimes \mathcal H^{(\alpha+1)l}$
is globally generated over $V$. 
Also, by $p^d\ge \alpha+1$ and the freeness of $|\mathcal H|$, 
we see that 
$S^{\alpha l p^d}(\mathcal G) \otimes \mathcal H^{lp^d}$
is globally generated over $V$. 
Putting $\beta:=lp^d$, we conclude the proof. 
\end{proof}
\begin{rem} \label{rem:relative}
In the above proof, when $Y$ is regular, 
we showed that for each $\alpha\ge1$, there is a $\beta\ge 1$
such that $S^{\alpha\beta}(\mathcal G)\otimes \mathcal H^\beta$ 
is globally generated over $V$. 
This is slightly stronger than the weak positivity of $\mathcal G$, 
since in the definition of the weak positivity, the global generation of 
$
(S^{\alpha\beta}(\mathcal G))^{\vee\vee}\otimes \mathcal H^\beta
$
over $V$ is employed. Note that the natural morphism 
$$
S^{\alpha\beta}(\mathcal G)\otimes \mathcal H^\beta
\to (S^{\alpha\beta}(\mathcal G))^{\vee\vee}\otimes \mathcal H^\beta
$$
is an isomorphism over $V$ as $\mathcal G$ is locally free over $V$. 
In the proof of Theorem~\ref{thm:relative2}, we use the global generation 
of $S^{\alpha\beta}(\mathcal G)\otimes \mathcal H^\beta$ over $V$. 
\end{rem}
When the geometric generic fiber has nef canonical divisor, 
by an argument similar to that of the proof of Theorem~\ref{thm:relative}, 
we can prove the following theorem:
\begin{thm} \label{thm:relative_generic} \samepage
Let $\bar f:\bar X\to \bar Y$, $f:X\to Y$, $\bar A$ and $A$ be as in 
Theorem~\ref{thm:relative}. 
Assume that $X$ satisfies $S_2$ and $G_1$. 
Let $\Delta$ be an effective $\mathbb Z_{(p)}$-AC divisor on $X$. 
Suppose that 
\begin{itemize}
\item $K_X+\Delta$ is $\mathbb Z_{(p)}$-Cartier, 
\item $(K_X+\Delta)|_{X_{\overline\eta}}$ is nef, 
	where $X_{\overline\eta}$ is the geometric generic fiber of $f$, and
\item $\left(X_{\overline \eta}, \Delta|_{X_{\overline \eta}}\right)$ is $F$-pure. 
\end{itemize}
Let $\bar N$ be a Cartier divisor on $\bar X$ such that 
$\bar A+\bar N$ is ample and $\bar N|_{X_{\overline\eta}}$ is nef. 
Put $N:=\bar N|_X$. 
Let $m\ge 1$ be an integer such that $m(K_X+\Delta)$ is Cartier. 
Then 
$$
f_*\mathcal O_X(m(K_X+\Delta)+A+N) \otimes \omega_Y^{[-m]}
$$
is pseudo-effective in the sense of Definition~\ref{defn:positivity}~$($4$)$. 
\end{thm}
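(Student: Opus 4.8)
The plan is to run the proof of Theorem~\ref{thm:relative} step by step, using Theorem~\ref{thm:PS-type_generic} in place of Theorem~\ref{thm:PS-type} and systematically replacing ``globally generated over $V$'' by ``generically globally generated'' and ``weakly positive over $V$'' by ``pseudo-effective''. Since pseudo-effectivity of a coherent sheaf, like weak positivity, is unchanged after deleting from $Y$ a closed subset of codimension $\ge 2$ (the sheaves $(S^{\alpha\beta}(\mathcal G'))^{\vee\vee}$ being reflexive on the normal $Y$, so that their spaces of global sections and their generic generation are unaffected), I would first remove the singular locus of $Y$ and assume $Y$ regular. Then the iterated Frobenius morphisms ${F_Y^e}\colon Y^e\to Y$ are flat, $\omega_Y^{[-m]}$ is invertible, and the projection formula gives $f_*\mathcal O_X(m(K_X+\Delta)+A+N)\otimes\omega_Y^{[-m]}\cong\mathcal G$, where $\mathcal G:=f_*\mathcal O_X(m(K_{X/Y}+\Delta)+A+N)$; so it suffices to prove $\mathcal G$ pseudo-effective. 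Note that, in contrast with Theorem~\ref{thm:relative}, no shrinking is needed to make $f$ flat, since we will invoke Theorem~\ref{thm:PS-type_generic} directly rather than Proposition~\ref{prop:e-surjective}.

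Next, as in the proof of Theorem~\ref{thm:relative}, fix an ample line bundle $\mathcal L$ on $Y$ with $|\mathcal L|$ free, set $\mathcal H:=\omega_Y^m\otimes\mathcal L^{m(n+1)}$, and --- replacing $\mathcal L$ by a power --- assume $\mathcal H$ ample with $|\mathcal H|$ free. Given $\alpha\ge 1$, choose $d$ with $p^d\ge\alpha+1$ and apply Theorem~\ref{thm:PS-type_generic} to $f_{Y^d}\colon X_{Y^d}\to Y^d$. Its hypotheses are verified exactly as in Theorem~\ref{thm:relative}: $X_{Y^d}$ satisfies $S_2$ and $G_1$ by \cite[Lemma~6.6]{Eji19d}, $K_{X_{Y^d}}+\Delta_{Y^d}$ is $\mathbb Z_{(p)}$-Cartier, $\bar A_{Y^d}$ remains sufficiently ample and $\bar A_{Y^d}+\bar N_{Y^d}$ is ample, and $\bigl((X_{Y^d})_\eta,\Delta_{Y^d}|_{(X_{Y^d})_\eta}\bigr)$ is $F$-pure with $(K_{X_{Y^d}}+\Delta_{Y^d})|_{(X_{Y^d})_\eta}$ nef. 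The last assertion is the only genuinely new ingredient: the geometric generic fibre of $f_{Y^d}$ is canonically $X_{\overline\eta}$ (the Frobenius twist of the function field $k(Y)$ being absorbed on passing to its algebraic closure), and both $F$-purity and nefness descend from $X_{\overline\eta}$ along the faithfully flat extension $X_{\overline\eta}\to(X_{Y^d})_\eta$. Theorem~\ref{thm:PS-type_generic} then gives that ${F_Y^d}^*\mathcal G\otimes\mathcal H$ is generically globally generated.

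From here I would repeat the symmetric-power and Frobenius-pushforward manipulation of Theorem~\ref{thm:relative} verbatim, with ``over $V$'' replaced by ``generically''. With $\mathcal E:=({F_Y^d}_*\mathcal O_Y)\otimes({F_Y^d}_*\mathcal O_Y)^\vee$ and $l\ge 1$ chosen so that $\mathcal E\otimes\mathcal H^l$ is globally generated, the sheaf $S^{\alpha l p^d}({F_Y^d}^*\mathcal G\otimes\mathcal H)\cong{F_Y^d}^*\bigl(S^{\alpha l p^d}(\mathcal G)\otimes\mathcal H^{\alpha l}\bigr)$ is generically globally generated; applying $({F_Y^d}_*(?))\otimes({F_Y^d}_*\mathcal O_Y)^\vee\otimes\mathcal H^l$ and using that the evaluation map $({F_Y^d}_*\mathcal O_Y)\otimes({F_Y^d}_*\mathcal O_Y)^\vee\to\mathcal O_Y$ is surjective (since ${F_Y^d}_*\mathcal O_Y$ is locally free on the regular $Y$) shows $S^{\alpha l p^d}(\mathcal G)\otimes\mathcal H^{(\alpha+1)l}$ generically globally generated, whence --- using $p^d\ge\alpha+1$ and the freeness of $|\mathcal H|$ --- so is $S^{\alpha l p^d}(\mathcal G)\otimes\mathcal H^{l p^d}$. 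Putting $\beta:=l p^d$ and noting that $S^{\alpha\beta}(\mathcal G)\otimes\mathcal H^\beta\to(S^{\alpha\beta}(\mathcal G'))^{\vee\vee}\otimes\mathcal H^\beta$ is generically an isomorphism ($\mathcal G'$ being locally free on a dense open of $Y$), I conclude that $(S^{\alpha\beta}(\mathcal G'))^{\vee\vee}\otimes\mathcal H^\beta$ is generically globally generated; as $\alpha$ was arbitrary, $\mathcal G$ is pseudo-effective, and by Remark~\ref{rem:weak pos} this does not depend on $\mathcal H$.

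The main obstacle, insofar as there is one, is concentrated in the middle paragraph: checking that the hypotheses of Theorem~\ref{thm:PS-type_generic} and of its prerequisites --- in particular the $S_2$/$G_1$ property of $X_{Y^d}$ and the nefness and $F$-purity of its generic fibre --- really persist under the base change $X\times_Y Y^d\to Y^d$, and that ``generic global generation'' propagates through the symmetric-power and Frobenius-pushforward bookkeeping exactly as ``global generation over $V$'' does in Theorem~\ref{thm:relative}; neither difficulty is deep. (Alternatively, one can organize the proof even more closely on Theorem~\ref{thm:relative}, shrinking $Y$ so that $f$ is flat with $F$-pure geometric fibres --- which needs $F$-purity of $X_{\overline\eta}$ to spread out --- and invoking a generic variant of Proposition~\ref{prop:e-surjective}; this has the minor drawback of deleting a divisor from $Y$, so that a small argument is required to pass from weak positivity over a dense open back to pseudo-effectivity on $Y$ itself.)
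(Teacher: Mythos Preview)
Your proposal is correct and follows exactly the approach the paper indicates: the paper states only that Theorem~\ref{thm:relative_generic} is proved ``by an argument similar to that of the proof of Theorem~\ref{thm:relative}'' once the geometric generic fiber has nef canonical divisor, and your write-up unpacks precisely this, replacing Theorem~\ref{thm:PS-type} by Theorem~\ref{thm:PS-type_generic} and ``globally generated over $V$'' by ``generically globally generated'' throughout. Your identification of the one new verification---that the geometric generic fiber of $f_{Y^d}$ coincides with $X_{\overline\eta}$, so that the $F$-purity and nefness hypotheses transfer---is the right place to focus, and your parenthetical alternative (spreading out $F$-purity and invoking Theorem~\ref{thm:relative} over a dense open $V$) is also valid.
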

In the remaining of this section, we prove the following theorem: 
\begin{thm} \label{thm:relative2}  \samepage
Let $\bar f:\bar X\to \bar Y$, $f:X\to Y$, $\bar A$, $A$, $g:U\to V$, 
$\bar N$ and $N$ be as in Theorem~\ref{thm:relative}. 
Assume that $X$ satisfies $S_2$ and $G_1$. 
Let $\Delta$ be an effective $\mathbb Z_{(p)}$-AC divisor on $X$. 
Let $W$ be the largest Gorenstein open subset of $X$. 
Suppose that 
\begin{itemize}
\item $K_X+\Delta$ is $\mathbb Z_{(p)}$-Cartier, 
\item $K_U+\Delta|_U$ is nef over $V$, 
\item $g:U\to V$ is flat, 
\item the codimension of $(U\setminus W)|_{U_v}$ $($resp. $\mathrm{Supp}(\Delta)|_{U_v})$ in $U_v$ is at least two $($resp. one$)$ for every $v\in V$,  
\item $\left(U_{\overline v}, \overline{\Delta|_{W_{\overline v}}}\right)$ is $F$-pure for every $v\in V$, where $W_{\overline v}$ is the geometric fiber of $W$ over $v$ and $\overline{\Delta|_{W_{\overline v}}}$ is the $\mathbb Z_{(p)}$-AC divisor on $U_{\overline v}$ obtained as the unique extension of $\Delta|_{W_{\overline v}}$. 
\item $Y$ is regular 
$($this assumption is stronger than that of Theorem~\ref{thm:relative}$)$. 
\end{itemize}
Let $m\ge 1$ be an integer such that $m(K_X+\Delta)$ is Cartier. 
Let $\bar{\mathcal L}$ and $\bar{\mathcal H}$ be ample line bundles on $\bar Y$ 
with $|\bar{\mathcal L}|$ free. 
Put $\mathcal L:=\bar{\mathcal L}|_Y$ and $\mathcal H:=\bar{\mathcal H}|_Y$. 
Then the sheaf 
$$
f_*\mathcal O_X(m(K_{X/Y}+\Delta) +A +N) 
\otimes \omega_Y \otimes \mathcal L^{\dim Y} \otimes \mathcal H
$$
is generated by its global sections over $V$. 
\end{thm}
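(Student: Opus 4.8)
Write $\mathcal G:=f_*\mathcal O_X\big(m(K_{X/Y}+\Delta)+A+N\big)$ and $n:=\dim Y$. Since $Y$ is regular, by definition $K_{X/Y}=K_X-f^*K_Y$ and $\omega_Y^{[-m]}=\omega_Y^{\otimes(-m)}$, so the projection formula gives $f_*\mathcal O_X(m(K_X+\Delta)+A+N)\cong\mathcal G\otimes\omega_Y^{\otimes m}$; hence Theorem~\ref{thm:relative}, all of whose hypotheses we have assumed (the present $Y$ being regular), shows that
$\mathcal G\cong f_*\mathcal O_X(m(K_X+\Delta)+A+N)\otimes\omega_Y^{[-m]}$ is weakly positive over $V$. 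Moreover, from the proof of Theorem~\ref{thm:relative} together with Remark~\ref{rem:relative} we may fix an ample line bundle $\mathcal H_1$ on $Y$, restricted from an ample line bundle on $\bar Y$ and with $|\mathcal H_1|$ free, and an integer $\beta_0\ge1$, such that $S^{k}(\mathcal G)\otimes\mathcal H_1^{\,s}$ is globally generated over $V$ for every $k\in\beta_0\mathbb Z_{>0}$ and every $s\ge k$ (apply that proof with $\alpha=1$ to obtain $\beta_0$, then use the surjections $S^{\beta_0}(\mathcal G)^{\otimes t}\twoheadrightarrow S^{t\beta_0}(\mathcal G)$ and the freeness of $|\mathcal H_1|$, noting that a tensor product of sheaves globally generated over $V$ is again globally generated over $V$).

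The plan is to feed this ``symmetric-power'' global generation into one more round of the Frobenius argument used in the proof of Theorem~\ref{thm:relative}, this time extracting a single copy of $\mathcal G$ carrying the sharp twist. Fixing $e\ge1$ with $e_0\mid e$, one passes to the $e$-th Frobenius twist and uses that symmetric powers commute with $F_Y^{e*}$ and that $F_Y^{e*}\mathcal M=\mathcal M^{\otimes p^e}$ for line bundles $\mathcal M$. Combining the global generation over $V$ of $S^{k}(\mathcal G)\otimes\mathcal H_1^{\,s}$ with the following three ingredients yields the conclusion: (i) the push-forward–and–evaluation device from the proof of Theorem~\ref{thm:relative}, built on the surjection over $V$ of $\mathcal E:={F_Y^e}_*\mathcal O_Y\otimes({F_Y^e}_*\mathcal O_Y)^{\vee}\to\mathcal O_Y$, which holds because ${F_Y^e}_*\mathcal O_Y$ is locally free over $V$ ($Y$ regular); (ii) the surjectivity of $\phi^{(e)}_{(Y,0)}(M)\colon{F_Y^e}_*\mathcal O_Y\big((1-p^e)K_Y+p^eM\big)\to\mathcal O_Y(M)$, valid because $Y$ is regular, which converts a Frobenius power of $\omega_Y$ into a single factor $\omega_Y$; and (iii) a Castelnuovo--Mumford/Serre-vanishing estimate in the spirit of Lemma~\ref{lem:Frobenius glgen}: for a fixed coherent sheaf $\mathcal F_0$ on $Y$, the sheaf ${F_Y^e}_*\mathcal F_0\otimes\mathcal L^{\,n}\otimes\mathcal H\cong{F_Y^e}_*\big(\mathcal F_0\otimes F_Y^{e*}(\mathcal L^{\,n}\otimes\mathcal H)\big)$ is $0$-regular with respect to $\mathcal L$, hence globally generated, once $e\gg0$ — one uses here that $|\mathcal L|$ is free (so the relevant threshold is $n$) and that $\mathcal L^{\,n-i}\otimes\mathcal H$ is ample for $0<i\le n$, so that $H^i\big(Y,\mathcal F_0\otimes F_Y^{e*}(\mathcal L^{\,n-i}\otimes\mathcal H)\big)=0$ for $e\gg0$ by Serre vanishing. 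Together these give that $\mathcal G\otimes\omega_Y\otimes\mathcal L^{\,n}\otimes\mathcal H$ is globally generated over $V$.

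The hard part is the interface between the descent of the middle paragraph and step (iii): one must choose the symmetric power $k$, the auxiliary twists in the $\mathcal E$-device, and the Frobenius level $e$ so that the $p^e$-divisibility created by pulling back and pushing forward cancels exactly against the accumulated powers of $\mathcal H_1$ and against the Frobenius power of $\omega_Y$ produced by $\phi^{(e)}_{(Y,0)}$, leaving \emph{precisely} one copy of $\omega_Y$, the factor $\mathcal L^{\,n}$ in the given free ample bundle $\mathcal L$ (and not a larger multiple of it), and an \emph{arbitrary} additional ample $\mathcal H$; in particular no positive multiple of $\mathcal L$ may be secretly absorbed into $\mathcal H$, since the statement is asserted for every ample $\mathcal H$. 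A further, more routine difficulty — handled exactly as in Theorem~\ref{thm:relative} — is to carry ``global generation over $V$'', rather than over all of $Y$, through each operation, which is why the whole argument is organised over the open set $V$ on which $\mathcal G$ and ${F_Y^e}_*\mathcal O_Y$ are locally free and the surjections in (i) and (ii) are available.
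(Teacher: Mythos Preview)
Your proposal correctly isolates two of the three inputs --- the weak positivity of $\mathcal G$ (Theorem~\ref{thm:relative} and Remark~\ref{rem:relative}) and the Castelnuovo--Mumford estimate (iii) --- but it is missing the mechanism that collapses a large symmetric power of $\mathcal G$ back to a \emph{single} copy. Your devices (i) and (ii) are both $Y$-level: the evaluation $\mathcal E\to\mathcal O_Y$ and the trace $\phi^{(e)}_{(Y,0)}$ act only on line bundles on $Y$ and cannot produce a surjection onto $\mathcal G$ from something built out of $S^{k}(\mathcal G)$. The paper obtains this from the \emph{$X$-level} trace $\phi^{(e)}_{(X,\Delta)}$: exactly as in the proof of Theorem~\ref{thm:PS-type} one constructs, for each admissible $e$, a morphism
\[
\bigoplus {F_Y^e}_*S^{q_e}\big(f_*\mathcal O_X(m(K_X+\Delta)+A+N)\big)\ \longrightarrow\ f_*\mathcal O_X(m(K_X+\Delta)+A+N)
\]
surjective over $V$, where $mq_e+r_e=(m-1)p^e+1$. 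This is the step you flag as ``hard'' in your last paragraph and do not carry out.

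Once that map is in hand, the cancellation you worry about is not delicate at all: tensoring by $\omega_Y^{1-m}\otimes\mathcal L^n\otimes\mathcal H$ and using $f_*\mathcal O_X(m(K_X+\Delta)+A+N)\cong\mathcal G\otimes\omega_Y^{m}$ together with the identity $mq_e+p^e(1-m)=1-r_e$ yields a morphism
\[
\bigoplus\Big({F_Y^e}_*\big(S^{q_e}(\mathcal G)\otimes\omega_Y^{1-r_e}\otimes\mathcal H^{p^e}\big)\Big)\otimes\mathcal L^n\ \longrightarrow\ \mathcal G\otimes\omega_Y\otimes\mathcal L^n\otimes\mathcal H
\]
surjective over $V$, with exactly one $\omega_Y$ and exactly $\mathcal L^n$ on the right for free. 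Now split $\mathcal H^{p^e}=\mathcal H^{q_e}\otimes\mathcal H^{p^e-q_e}$: your first paragraph disposes of $S^{q_e}(\mathcal G)\otimes\mathcal H^{q_e}$ (globally generated over $V$ for $e\gg0$), and your estimate (iii), applied with $\mathcal F_0=\bigoplus_{0\le r<m}\omega_Y^{1-r}$, handles $\big({F_Y^e}_*(\omega_Y^{1-r_e}\otimes\mathcal H^{p^e-q_e})\big)\otimes\mathcal L^n$, since $p^e-q_e\to\infty$ when $m\ge2$ (and $q_e=1$, $r_e=0$ when $m=1$). So the missing piece is precisely the $\phi^{(e)}_{(X,\Delta)}$-based surjection from Theorem~\ref{thm:PS-type}; neither $\phi^{(e)}_{(Y,0)}$ nor the $\mathcal E$-device can substitute for it.
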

\begin{proof}
Put $n:=\dim Y$. 
We first prove that, for any coherent sheaf $\mathcal F$ on $Y$, 
there is an integer $m_0\ge 1$ such that 
$$
\left( {F_Y^e}_*(\mathcal F\otimes \mathcal H^m) \right) \otimes \mathcal L^n
$$
is globally generated for each $m\ge m_0$ and $e\ge 1$. 
Let $\bar{\mathcal F}$ be a coherent sheaf on $\bar Y$ with 
$\bar{\mathcal F}\cong \mathcal F$. 
Since $F_{\bar Y}$ is finite, we have 
\begin{align*}
H^i\left( \bar Y, \left( {F_{\bar Y}^e}_* 
\left( \bar{\mathcal F} \otimes \bar{\mathcal H}^m \right) \right)
\otimes \bar{\mathcal L^{n-i}} \right)
& \cong H^i\left( \bar Y, {F_{\bar Y}^e}_* \left( \bar{\mathcal F} \otimes 
\bar{\mathcal H}^m\otimes \bar{\mathcal L}^{(n-i)p^e} \right) \right) 
\\ & \cong H^i\left( \bar Y, \bar{\mathcal F} \otimes 
\bar{\mathcal H}^m\otimes \bar{\mathcal L}^{(n-i)p^e} \right) 
\end{align*}
for each $0<i\le n$. Therefore, Fujita's vanishing theorem implies that 
there is $m_0\ge 1$ such that 
$\left( {F_{\bar Y}^e}_* \bar{\mathcal H}^m \right) \otimes \bar{\mathcal L}^n$ 
is $0$-regular with respect to $\bar{\mathcal L}$
in the sense of Castelnuovo--Mumford for each $m\ge m_0$ and $e\ge 1$, 
so it is globally generated by \cite[Theorem~1.8.5]{Laz04I}. 
Hence, 
$$
\left( {F_Y^e}_*(\mathcal F\otimes \mathcal H^m) \right) \otimes \mathcal L^n
\cong \left( \left({F_{\bar Y}^e}_*(\bar{\mathcal F} \otimes 
\bar{\mathcal H}^m \right) \otimes \bar{\mathcal L}^n \right)|_Y
$$
is also globally generated. 
Next, we show the assertion. 
Put 
$$
\mathcal G:=f_*\mathcal O_X(m(K_{X/Y}+\Delta) +A +N).
$$ 
Let $q_e$ and $r_e$ be the quotient and the remainder of the division of 
$(m-1)p^e+1$ by $m$. 
Then by the same argument as that of the proof of Theorem~\ref{thm:PS-type}, 
we obtain the morphism 
\begin{align*}
\bigoplus{F_Y^e}_*\big(S^{q_e}(f_*\mathcal O_X(m(K_X+\Delta)+A+N)) \big)
\to f_*\mathcal O_X(m(K_X+\Delta) +A +N)
\end{align*}
that is surjective over $V$. 
Note that $(U,\Delta|_U)$ is $F$-pure by the assumption. 
Taking the tensor product of the above morphism and 
$\omega_Y^{1-m} \otimes \mathcal L^n \otimes \mathcal H$, we get the morphism 
\begin{align*}
\bigoplus \left( {F_Y^e}_*\big(
S^{q_e}(\mathcal G) \otimes \omega_Y^{1-r_e} \otimes \mathcal H^{p^e}
\big) \right) \otimes \mathcal L^n
\to \mathcal G \otimes \omega_Y \otimes \mathcal L^n \otimes \mathcal H
\end{align*}
that is surjective over $V$. 
When $m=1$, we have $q_e=1$ and $r_e=0$ for each $e\ge1$. 
In this case, the sheaf 
$
\left( {F_Y^e}_* 
\left( \mathcal G \otimes \omega_Y \otimes \mathcal H^{p^e} \right)
\right) \otimes \mathcal L^n
$
is globally generated for $e\gg0$ by the first part of the proof, 
so the assertion follows. 
We consider the case when $m\ge 2$. 
In this case, we have $q_e\xrightarrow{e\to \infty} \infty$. 
By Theorem~\ref{thm:relative} and what mentioned in Remark~\ref{rem:relative}, 
for any $\alpha\ge 1$, there is a $\beta\ge 1$ such that 
$
S^{\alpha\beta}(\mathcal G)\otimes \mathcal H^\beta
$ 
is globally generated over $V$. 
Applying \cite[Lemma~2.14~b)]{Vie95} for $\alpha=1$, we see that 
$S^{q_e}(\mathcal G) \otimes \mathcal H^{q_e}$ is globally generated over $V$
for each $e\gg0$. 
Put $\mathcal E:=\bigoplus_{0\le r < m} \omega_Y^{1-r}$. 
Then we have the following sequence of morphisms that are surjective over $V$:
\begin{align*}
\bigoplus \bigoplus \left( {F_Y^e}_* 
\left( \mathcal E \otimes \mathcal H^{p^e-q_e} \right) \right) 
& \twoheadrightarrow 
\bigoplus \bigoplus \left( {F_Y^e}_* 
\left( \omega_Y^{1-r_e} \otimes \mathcal H^{p^e-q_e} \right) \right) 
\\ & \cong  
\bigoplus \left( {F_Y^e}_*\left( \left( \bigoplus \mathcal O_Y\right) 
\otimes \omega_Y^{1-r_e} \otimes \mathcal H^{p^e-q_e} \right) \right)
\\ & \to 
\bigoplus \left( {F_Y^e}_*\left( S^{q_e}(\mathcal G)\otimes \mathcal H^{q_e}
\otimes \omega_Y^{1-r_e} \otimes \mathcal H^{p^e-q_e} \right) \right)
\\ & \cong 
\bigoplus \left( {F_Y^e}_*\left( S^{q_e}(\mathcal G)
\otimes \omega_Y^{1-r_e} \otimes \mathcal H^{p^e} \right) \right)
\\ & \to 
\mathcal G \otimes \omega_Y \otimes \mathcal L^n \otimes \mathcal H. 
\end{align*}
Here, the morphism in the third line follows from the fact that 
$S^{q_e}(\mathcal G)\otimes \mathcal H^{q_e}$ is globally generated over $V$, 
and the last morphism was constructed in the above argument. 
Since $p^e-q_e \xrightarrow{e\to\infty}\infty$, 
the sheaf ${F_Y^e}_*(\mathcal E\otimes \mathcal H^{p^e-q_e})$ is 
globally generated for each $e\gg0$ by the first part of the proof, 
so the above sequence of morphisms implies that 
$\mathcal G \otimes \omega_Y \otimes \mathcal L^n \otimes \mathcal H$ 
is globally generated over $V$. 
\end{proof}
When the geometric generic fiber has nef canonical divisor, 
by an argument similar to the above, we can prove the following theorem: 
\begin{thm} \label{thm:relative2_generic}  \samepage
Let $\bar f:\bar X\to \bar Y$, $f:X\to Y$, $\bar A$ and $A$
be as in Theorem~\ref{thm:relative}. 
Assume that $X$ satisfies $S_2$ and $G_1$. 
Let $\Delta$ be an effective $\mathbb Z_{(p)}$-AC divisor on $X$. 
Suppose that 
\begin{itemize}
\item $K_X+\Delta$ is $\mathbb Z_{(p)}$-Cartier, 
\item $(K_X+\Delta)|_{X_{\overline \eta}}$ is nef, where $X_{\overline\eta}$ is the geometric generic fiber of $f$, 
\item $\left(X_{\overline\eta}, \Delta|_{X_{\overline\eta}}\right)$ is $F$-pure, and 
\item $Y$ is regular. 
\end{itemize}
Let $m\ge 1$ be an integer such that $m(K_X+\Delta)$ is Cartier. 
Let $\bar N$ be a Cartier divisor on $\bar X$ such that 
$\bar A +\bar N$ is ample and $\bar N|_{X_{\overline\eta}}$ is nef. 
Put $N:=\bar N|_X$. 
Let $\bar{\mathcal L}$ and $\bar{\mathcal H}$ be ample line bundles on $\bar Y$ 
with $|\bar{\mathcal L}|$ free. 
Put $\mathcal L:=\bar{\mathcal L}|_Y$ and $\mathcal H:=\bar{\mathcal H}|_Y$. 
Then the sheaf 
$$
f_*\mathcal O_X(m(K_{X/Y}+\Delta) +A +N) 
\otimes \omega_Y \otimes \mathcal L^{\dim Y} \otimes \mathcal H
$$
is generically generated by its global sections. 
\end{thm}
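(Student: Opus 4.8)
The plan is to reproduce the proof of Theorem~\ref{thm:relative2} essentially word for word, replacing each ``over $V$'' assertion by its generic counterpart and each input theorem by the ``generic'' version established above; no genuinely new idea is needed. Throughout set $\mathcal G:=f_*\mathcal O_X(m(K_{X/Y}+\Delta)+A+N)$ and $n:=\dim Y$, and let $q_e,r_e$ be the quotient and remainder of $(m-1)p^e+1$ divided by $m$, so that $mq_e=(m-1)p^e+1-r_e$ and, when $m\ge2$, $q_e\to\infty$ as $e\to\infty$; note also that $f_*\mathcal O_X(m(K_X+\Delta)+A+N)\cong\mathcal G\otimes\omega_Y^{m}$ since $Y$ is regular.

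The first step is the opening paragraph of the proof of Theorem~\ref{thm:relative2}: for any coherent sheaf $\mathcal F$ on $Y$ there is an $m_0\ge1$ with $({F_Y^e}_*(\mathcal F\otimes\mathcal H^m))\otimes\mathcal L^n$ globally generated for all $m\ge m_0$ and $e\ge1$. This concerns only $Y$ and uses nothing beyond Fujita's vanishing theorem and Castelnuovo--Mumford regularity, so it carries over unchanged and yields honest (not merely generic) global generation. The second step runs the argument of the proof of Theorem~\ref{thm:PS-type_generic} in place of that of Theorem~\ref{thm:PS-type} --- here one invokes $F$-purity of $(X_\eta,\Delta|_{X_\eta})$, which follows from $F$-purity of $(X_{\overline\eta},\Delta|_{X_{\overline\eta}})$ --- to produce, for each $e$ with $e_0\mid e$, a \emph{generically surjective} morphism
$$
\bigoplus {F_Y^e}_*\bigl(S^{q_e}(f_*\mathcal O_X(m(K_X+\Delta)+A+N))\bigr)\longrightarrow f_*\mathcal O_X(m(K_X+\Delta)+A+N);
$$
tensoring with $\omega_Y^{1-m}\otimes\mathcal L^n\otimes\mathcal H$ and using the isomorphism above, one recovers exactly the generically surjective morphism displayed in the proof of Theorem~\ref{thm:relative2}, with target $\mathcal G\otimes\omega_Y\otimes\mathcal L^n\otimes\mathcal H$.

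If $m=1$ then $q_e=1$ and $r_e=0$, and $({F_Y^e}_*(\mathcal G\otimes\omega_Y\otimes\mathcal H^{p^e}))\otimes\mathcal L^n$ is globally generated for $e\gg0$ by the first step, so the target is generically globally generated. If $m\ge2$, then by Theorem~\ref{thm:relative_generic} the sheaf $f_*\mathcal O_X(m(K_X+\Delta)+A+N)\otimes\omega_Y^{[-m]}\cong\mathcal G$ is pseudo-effective in the sense of Definition~\ref{defn:positivity}~(4); since $\mathcal G$ is locally free over a dense open subset of $Y$ (generic freeness on the integral scheme $Y$), the natural map $S^{\alpha\beta}(\mathcal G)\to(S^{\alpha\beta}(\mathcal G'))^{\vee\vee}$, with $\mathcal G'$ the torsion-free quotient of $\mathcal G$, is an isomorphism over that open subset, so for every $\alpha\ge1$ there is $\beta\ge1$ for which $S^{\alpha\beta}(\mathcal G)\otimes\mathcal H^\beta$ is generically globally generated. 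Applying with $\alpha=1$ the pseudo-effective analog of the use of \cite[Lemma~2.14~b)]{Vie95} in the proof of Theorem~\ref{thm:relative2} --- whose proof is formal and goes through verbatim with ``over $V$'' replaced by ``generically'' --- one obtains $S^{q_e}(\mathcal G)\otimes\mathcal H^{q_e}$ generically globally generated for $e\gg0$. Feeding this into the chain of morphisms at the end of the proof of Theorem~\ref{thm:relative2}, with $\mathcal E:=\bigoplus_{0\le r<m}\omega_Y^{1-r}$, reduces the claim to the global generation of $({F_Y^e}_*(\mathcal E\otimes\mathcal H^{p^e-q_e}))\otimes\mathcal L^n$ for $e\gg0$, which holds by the first step since $p^e-q_e\to\infty$; hence $\mathcal G\otimes\omega_Y\otimes\mathcal L^n\otimes\mathcal H$ is generically globally generated, which is the assertion.

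The only point requiring real care --- and it is routine --- is the bookkeeping of dense open subsets: each ``generically surjective'' and each ``generically globally generated'' statement, together with the generic local freeness of $\mathcal G$, holds over some dense open subset of $Y$, and once the finitely many large values of $e$ have been fixed the whole argument involves only finitely many sheaves and morphisms, so a single dense open over which all the relevant morphisms are surjective exists and the compositions make sense there. A secondary, equally routine point is to confirm the pseudo-effective form of \cite[Lemma~2.14~b)]{Vie95}; this causes no difficulty, since that lemma is purely formal and nowhere uses local freeness in an essential way.
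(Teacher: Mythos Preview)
Your proposal is correct and follows precisely the route the paper indicates: it transports the proof of Theorem~\ref{thm:relative2} to the generic setting by replacing Theorem~\ref{thm:PS-type} and Theorem~\ref{thm:relative} with their generic analogs (Theorems~\ref{thm:PS-type_generic} and~\ref{thm:relative_generic}), and by replacing each ``surjective over $V$'' by ``generically surjective''. Your explicit treatment of the passage from pseudo-effectivity of $(S^{\alpha\beta}(\mathcal G'))^{\vee\vee}\otimes\mathcal H^\beta$ to generic global generation of $S^{\alpha\beta}(\mathcal G)\otimes\mathcal H^\beta$ via generic local freeness of $\mathcal G$ is one acceptable way to handle the point the paper flags in Remark~\ref{rem:relative}; alternatively, one may simply observe that the proof of Theorem~\ref{thm:relative_generic} itself already yields generic global generation of $S^{\alpha\beta}(\mathcal G)\otimes\mathcal H^\beta$ directly, just as in Remark~\ref{rem:relative}.
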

\section{Numerical Kodaira dimension of algebraic fiber spaces}
\label{section:subadditivity}
The main theorem of this section is the following: 
\begin{thm} \label{thm:subadditivity} \samepage
Let $X$ be a normal projective variety 
and let $\Delta$ be an effective $\mathbb Z_{(p)}$-Weil divisor on $X$. 
Let $Y$ be a regular projective variety 
and let $f:X\to Y$ be a surjective morphism. 
Suppose that 
\begin{itemize}
\item $K_X+\Delta$ is $\mathbb Z_{(p)}$-Cartier, 
\item $K_{X_{\overline\eta}}+\Delta|_{X_{\overline\eta}}$ is nef, 
where $X_{\overline\eta}$ is the geometric generic fiber of $f$, and 
\item $\left(X_{\overline\eta},\Delta|_{X_{\overline\eta}}\right)$ is $F$-pure. 
\end{itemize}
Let $D$ be a $\mathbb Q$-Cartier divisor on $X$ such that 
$D-(K_{X/Y}+\Delta)$ is nef. 
Then for every $\mathbb Q$-Cartier divisor $E$ on $Y$, 
$$
\kappa_\sigma(D+f^*E) 
\ge \kappa_\sigma\left(D|_{X_{\overline\eta}}\right) 
+\kappa(E)
$$
and 
$$
\kappa_\sigma(D+f^*E) 
\ge \kappa\left(D|_{X_{\overline\eta}}\right) 
+\kappa_\sigma(E). 
$$ 
In particular, 
$$
\kappa_\sigma(K_X+\Delta) 
\ge \kappa_\sigma\left(
K_{X_{\overline\eta}}+\Delta|_{X_{\overline\eta}}\right) 
+\kappa(Y)
$$
and 
$$
\kappa_\sigma(K_X+\Delta) 
\ge \kappa\left(
K_{X_{\overline\eta}}+\Delta|_{X_{\overline\eta}}\right) 
+\kappa_\sigma(Y). 
$$
\end{thm}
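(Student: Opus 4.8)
The plan is to reduce the numerical-dimension inequalities to a statement about global generation of the direct image sheaves established earlier, and then to ``spread out'' positivity from the base and the generic fiber. First I would pass from $D$ to a more convenient divisor: since $D-(K_{X/Y}+\Delta)$ is nef, for any ample $A'$ on $X$ and a large integer $i$, the divisor $A:=iA'$ is sufficiently ample in the sense needed, and we may write, for suitable $m$ with $m(K_X+\Delta)$ Cartier and $m$ divisible enough, $mD + (\text{correction}) = m(K_{X/Y}+\Delta) + A + N$ with $N$ nef, after adding $A$ and absorbing the nef part $m(D-(K_{X/Y}+\Delta))$ into $N$; the error from adding $A$ is controlled because numerical dimension is insensitive to adding a fixed divisor inside a $\lfloor mD\rfloor + (\text{ample})$-type count. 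Thus the key input is Theorem~\ref{thm:relative2_generic} (and Theorem~\ref{thm:relative_generic}): the sheaf $f_*\mathcal O_X(m(K_{X/Y}+\Delta)+A+N)\otimes \omega_Y\otimes\mathcal L^n\otimes\mathcal H$ is generically globally generated, and more relevantly the pseudo-effectivity of $f_*\mathcal O_X(m(K_{X/Y}+\Delta)+A+N)\otimes\omega_Y^{[-m]}$.

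Next I would handle the two inequalities separately but in parallel. For $\kappa_\sigma(D+f^*E)\ge \kappa\!\left(D|_{X_{\overline\eta}}\right)+\kappa_\sigma(E)$: choose a Cartier divisor $E$ on $Y$ whose section ring grows like $m^{\kappa_\sigma(E)}$ after adding an ample divisor on $Y$; then $h^0\!\left(X,\lfloor m(D+f^*E)\rfloor + A\right)$ is bounded below, via the projection formula and the inclusion $H^0(Y, f_*(\cdots)\otimes\mathcal O_Y(mE+\text{ample}))\hookrightarrow H^0(X,\cdots)$, by $h^0$ of a twist of the direct image sheaf $f_*\mathcal O_X(\lfloor mD\rfloor + A)$ by $mE$. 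Using that this direct image, after the normalization coming from Theorem~\ref{thm:relative_generic}, contains $\omega_Y^{[m]}\otimes(\text{a pseudo-effective sheaf})$ and restricts on $X_{\overline\eta}$ to $\mathcal O_{X_{\overline\eta}}(\lfloor mD\rfloor|_{X_{\overline\eta}} + A|_{X_{\overline\eta}})$ with $\approx h^0\bigl(\lfloor mD\rfloor|_{X_{\overline\eta}}\bigr) \sim m^{\kappa(D|_{X_{\overline\eta}})}$ sections, one multiplies the two growth rates: a pseudo-effective sheaf on $Y$ twisted by $mE+(\text{ample})$ has $\gtrsim m^{\kappa_\sigma(E)}$ sections after a further ample twist, and the fiberwise sections contribute the factor $m^{\kappa(D|_{X_{\overline\eta}})}$, giving $h^0 \gtrsim m^{\kappa(D|_{X_{\overline\eta}}) + \kappa_\sigma(E)}$. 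For the other inequality one swaps the roles: $E$ is taken with $h^0 \sim m^{\kappa(E)}$, and the fiberwise growth is governed by $\kappa_\sigma\!\left(D|_{X_{\overline\eta}}\right)$, which one accesses by restricting to the geometric generic fiber and using that $\kappa_\sigma$ is computed after adding an ample divisor there — here the hypotheses that $X_{\overline\eta}$ is nice ($F$-pure, $K_{X_{\overline\eta}}+\Delta|_{X_{\overline\eta}}$ nef) are exactly what make the fiberwise direct image behave uniformly in $m$.

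The technical glue I would spell out carefully is the passage from ``the direct image sheaf $\mathcal G_m := f_*\mathcal O_X(\lfloor mD\rfloor + A + N_m)$ is pseudo-effective (up to the $\omega_Y^{[m]}$ twist)'' to a lower bound on $h^0(Y,\mathcal G_m\otimes\mathcal O_Y(mE + A_Y))$. This is where Definition~\ref{defn:positivity}(4) is used: pseudo-effectivity gives, for each $\alpha$, a $\beta$ with $(S^{\alpha\beta}\mathcal G_m')^{\vee\vee}\otimes\mathcal H^\beta$ generically globally generated; feeding in the section growth of $mE$ and of the fiber, and then taking a symmetric power and quotient to linearize, yields the desired polynomial lower bound. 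I would also need a generic base change / flatness reduction: shrink $Y$ to a regular open $V$ over which $f$ is flat, $X_{\overline\eta}$ spreads to nice fibers, and $W$ (the Gorenstein locus) has the codimension-$\ge 2$ fiber behavior, so that Theorem~\ref{thm:relative_generic} and Theorem~\ref{thm:relative2_generic} apply verbatim; weak/pseudo positivity is unaffected by such shrinking.

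The main obstacle I anticipate is bookkeeping the roundings and the nef divisor $N_m$ uniformly in $m$: $\lfloor mD\rfloor$ differs from $m D$ by a bounded divisor, $D-(K_{X/Y}+\Delta)$ being nef must be turned into ``$\lfloor mD\rfloor + A - (K_{X/Y}+\Delta)$-type is nef or becomes nef after a fixed ample correction,'' and one must keep $m(K_X+\Delta)$ Cartier, so $m$ ranges over a fixed residue/divisibility class — this forces the limsup in Definition~\ref{defn:num dim} to be taken along that subsequence, which is fine but must be justified. A secondary difficulty is ensuring the two ``factors'' (base growth and fiber growth) genuinely multiply rather than merely add: this requires a product-of-section-spaces argument, i.e. that for general $y\in Y$ the restriction $H^0(X,\lfloor mD\rfloor + A + f^*(mE+A_Y))\to H^0(X_y,\cdots)$ is surjective onto a space of the right dimension and that the image of $f_*$ is big enough on the base — precisely the content of generic global generation in Theorems~\ref{thm:relative_generic} and~\ref{thm:relative2_generic}, so I would invoke those rather than reprove a Viehweg-style fiber-product argument from scratch.
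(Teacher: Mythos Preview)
Your overall plan---reduce to the global-generation results of Section~\ref{section:nef}, then multiply fiberwise and basewise section growth---is the right one, and it matches the paper's strategy. But you are reaching for the wrong tool and, as a result, creating a uniformity problem that the paper simply does not have.

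You propose to use the pseudo-effectivity of $\mathcal G_m:=f_*\mathcal O_X(m(K_{X/Y}+\Delta)+A+N)\otimes\omega_Y^{[-m]}$ (Theorem~\ref{thm:relative_generic}) and then extract a lower bound on $h^0(Y,\mathcal G_m\otimes\mathcal O_Y(mE+A_Y))$ from Definition~\ref{defn:positivity}(4). This step is genuinely problematic: pseudo-effectivity gives, for each $\alpha$, a $\beta$ such that $(S^{\alpha\beta}\mathcal G_m')^{\vee\vee}\otimes\mathcal H^\beta$ is generically globally generated, but this $\beta$ depends on $m$ (since $\mathcal G_m$ varies), and there is no obvious way to turn this into a polynomial lower bound on $h^0$ of $\mathcal G_m$ itself as $m\to\infty$. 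Your anticipated ``main obstacle'' about bookkeeping $N_m$ uniformly in $m$ is exactly this.

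The paper avoids the issue entirely by using Theorem~\ref{thm:relative2_generic} instead. The point you are missing is that the twist there, $\omega_Y\otimes\mathcal L^{\dim Y}\otimes\mathcal H$, is \emph{independent of both $m$ and $N$}. So one fixes a sufficiently ample $A'$ on $X$ once and for all, sets $A:=A'+(n+1)f^*L$, and then for every $m$ with $i\mid m$ applies Theorem~\ref{thm:relative2_generic} with $N:=m(D-(K_{X/Y}+\Delta))$ (nef by hypothesis) to conclude that
\[
f_*\mathcal O_X(mD+A)\cong f_*\mathcal O_X\bigl(m(K_{X/Y}+\Delta)+A'+N\bigr)\otimes\mathcal O_Y((n+1)L)
\]
is generically globally generated for this \emph{fixed} $A$. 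Generic global generation gives directly a generically isomorphic injection $\bigoplus^{r_m}\mathcal O_Y\hookrightarrow f_*\mathcal O_X(mD+A)$ with $r_m=h^0\bigl(X_{\overline\eta},(mD+A)|_{X_{\overline\eta}}\bigr)$; twisting by $\mathcal O_Y(mE)$ (or by $\mathcal O_Y(mE+C)$ for the second inequality) and taking global sections yields $r_m\cdot h^0(Y,mE)\le h^0(X,m(D+f^*E)+A)$. No symmetric powers, no pseudo-effectivity, no shrinking of $Y$ are needed. The rounding issue disappears because one works only along $m$ divisible by a fixed $i$ making $iD$, $i(K_X+\Delta)$, $iE$ Cartier, and Lemma~\ref{lem:multiple} says $\kappa_\sigma$ is unchanged under this restriction.
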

\begin{rem} \label{rem:normality}
The normality of $X$ and the $F$-purity of $X_{\overline\eta}$ 
induce the normality of $X_{\overline\eta}$. 
This follows from an argument similar to that of the proof of \cite[Proposition~5.7~(4)]{Eji19w}. 
\end{rem}
To prove Theorem~\ref{thm:subadditivity}, we use the following lemmas. 
\begin{lem}[\textup{\cite[Corollary~2.1.38]{Laz04I}}] \label{lem:bound}
Let $Y$ be a normal projective variety and let $D$ be a Cartier divisor on $Y$. 
Then there exists an integer $m_0 \ge 1$ 
and a real number $\gamma>0$ such that 
$$
\gamma m^{\kappa(D)} \le h^0(Y, mm_0D). 
$$
for each $m\gg0$. 
\end{lem}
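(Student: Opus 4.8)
The plan is to bound $h^0$ from below by the Hilbert function of the Iitaka image. If $\kappa(D) = -\infty$ then $h^0(Y, mD) = 0$ for all $m \geq 1$, and reading $m^{-\infty}$ as $0$ makes the inequality vacuous, so I assume $\kappa := \kappa(D) \geq 0$. By definition of the Iitaka dimension, the maximum of $\dim \overline{\phi_{|mD|}(Y)}$ over those $m \geq 1$ with $h^0(Y, mD) > 0$ equals $\kappa$ and is attained; I would fix one such $m_0$ realizing it and put $D' := m_0 D$ and $V := H^0(Y, D')$. Since the Iitaka dimension is unchanged under passing to a positive multiple, $\kappa(D') = \kappa$, so it suffices to produce $\gamma > 0$ with $h^0(Y, mD') \geq \gamma m^{\kappa}$ for $m \gg 0$. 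Writing $\phi := \phi_{|V|} \colon Y \dashrightarrow \mathbb{P}^N$ for the associated rational map and $Z := \overline{\phi(Y)} \subseteq \mathbb{P}^N$ for its image, $Z$ is an irreducible projective variety of dimension $\kappa$.

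The key step is to relate global sections on $Y$ to the homogeneous coordinate ring of $Z$. Fixing a basis $s_0, \dots, s_N$ of $V$, multiplication of sections gives a map $S^m V \to H^0(Y, mD')$ whose image consists of the regular sections $P(s_0, \dots, s_N)$ for homogeneous $P$ of degree $m$. I would identify its kernel by observing that $P(s_0, \dots, s_N) = 0$ in $H^0(Y, mD')$ holds exactly when $P$ vanishes on $\phi(Y)$, equivalently on its closure $Z$; thus the kernel equals the degree-$m$ part $I(Z)_m$ of the homogeneous ideal of $Z \subseteq \mathbb{P}^N$. Consequently the image has dimension $\dim S^m V - \dim I(Z)_m = H_Z(m)$, the Hilbert function of $Z$, and in particular $h^0(Y, mD') \geq H_Z(m)$.

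Finally, since $Z$ is a projective variety of dimension $\kappa$, its Hilbert function agrees with its Hilbert polynomial for $m \gg 0$, and the latter has leading term $(\deg Z / \kappa!)\, m^{\kappa}$ with $\deg Z > 0$. Hence $H_Z(m) \geq \gamma m^{\kappa}$ for some $\gamma > 0$ and all large $m$, which yields $h^0(Y, m m_0 D) = h^0(Y, mD') \geq \gamma m^{\kappa(D)}$ and proves the lemma. I expect the main obstacle to be the kernel identification: one must check that a polynomial relation among the $s_i$ holding identically on the variety $Y$ is the same as vanishing on the closed image $Z$, where irreducibility of $Y$ and the definition of $Z$ as the closure of $\phi(Y)$ enter; the usual worry about extending sections across the base locus does not arise, since each $P(s_0, \dots, s_N)$ is already a regular section of $mD'$ on all of $Y$.
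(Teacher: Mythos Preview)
Your argument is correct. The paper does not give its own proof of this lemma but simply cites \cite[Corollary~2.1.38]{Laz04I}; your approach via the Hilbert function of the image of the Iitaka rational map is essentially the standard argument found in that reference.
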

\begin{lem} \label{lem:multiple}
Let $Y$ be a normal projective variety and let 
$D$ be a $\mathbb Q$-Cartier $\mathbb Q$-divisor on $Y$. 
Then for each $l\ge 1$, the equalities 
$$
\kappa_\sigma^{\mathbb R}(D) 
= \kappa_\sigma^{\mathbb R}(lD)
$$ 
and 
$$
\kappa_\sigma(D) = \kappa_\sigma(lD)
$$
holds. 
\end{lem}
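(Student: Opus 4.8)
The plan is to reduce everything to the definition of the numerical dimension (Definition~\ref{defn:num dim}), which is phrased in terms of the growth rate of $h^0(Y,\lfloor mD\rfloor+A)$ as $m\to\infty$ for a fixed ample Cartier divisor $A$. Fix $l\ge1$. The key observation is that the function $m\mapsto\lfloor mlD\rfloor$ is, up to bounded error, the restriction of $m\mapsto\lfloor mD\rfloor$ to the arithmetic progression $l\mathbb Z$, so the two $\limsup$-type quantities defining $\kappa_\sigma^{\mathbb R}(D)$ and $\kappa_\sigma^{\mathbb R}(lD)$ should agree. Concretely, I would prove the two inequalities $\kappa_\sigma^{\mathbb R}(D)\ge\kappa_\sigma^{\mathbb R}(lD)$ and $\kappa_\sigma^{\mathbb R}(D)\le\kappa_\sigma^{\mathbb R}(lD)$ separately, and then observe that the integer-valued version $\kappa_\sigma$ follows immediately because $\kappa_\sigma(D)$ is just the largest integer $r$ witnessing the $\limsup$ positivity, which is controlled by the real number $\kappa_\sigma^{\mathbb R}(D)$ in the same way for both $D$ and $lD$ (indeed the non-negative integers $r$ that work for $D$ are exactly those that work for $lD$).

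For the inequality $\kappa_\sigma^{\mathbb R}(lD)\le\kappa_\sigma^{\mathbb R}(D)$: suppose $r$ is a real number and $A$ an ample Cartier divisor with $\limsup_{m\to\infty} h^0(Y,\lfloor mlD\rfloor+A)/m^r>0$. Writing $m'=ml$, along the subsequence $m'\in l\mathbb Z$ we have $h^0(Y,\lfloor m'D\rfloor+A)/(m')^r = l^{-r}\cdot h^0(Y,\lfloor mlD\rfloor+A)/m^r$, so the full $\limsup$ over all $m'$ of $h^0(Y,\lfloor m'D\rfloor+A)/(m')^r$ is also positive; hence $r\le\kappa_\sigma^{\mathbb R}(D)$, and taking the supremum over such $r$ gives the claim. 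For the reverse inequality $\kappa_\sigma^{\mathbb R}(D)\le\kappa_\sigma^{\mathbb R}(lD)$ one has to pass from a general $m$ to a multiple of $l$. Here I would use that $\lfloor mD\rfloor\le\lfloor (ml)D\rfloor$ componentwise... more precisely, for any $m$ write $m=ql+s$ with $0\le s<l$; then $\lfloor mD\rfloor \ge \lfloor qlD\rfloor - B$ for a fixed effective divisor $B$ independent of $m$ (coming from the bounded fractional parts of $sD$ and the rounding), so $h^0(Y,\lfloor mD\rfloor+A)\le h^0(Y,\lfloor qlD\rfloor+A+B)\le h^0(Y,\lfloor qlD\rfloor+A')$ for a suitable ample Cartier $A'\ge A+B$ independent of $m$. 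Since $q\ge(m-l)/l$, dividing by $m^r$ and comparing with $q^r$ shows that positivity of the $\limsup$ for $D$ forces positivity of the $\limsup$ for $lD$ with the ample divisor $A'$, giving $r\le\kappa_\sigma^{\mathbb R}(lD)$.

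The only genuinely delicate point is the comparison of round-downs in the last step, i.e.\ producing the fixed effective divisor $B$ with $\lfloor mD\rfloor\ge\lfloor qlD\rfloor-B$ uniformly in $m$; but this is elementary, since $mD-qlD=sD$ ranges over the finitely many divisors $\{sD\mid 0\le s<l\}$ and round-down changes a divisor by at most the (fixed) divisor $\sum_i \lceil 1\rceil P_i$ supported on the components $P_i$ of $D$, so one may take $B=\sum_{0\le s<l}(\lceil -sD\rceil_{+})+ (\text{one copy of the reduced support of }D)$ or any sufficiently large fixed effective divisor. Once $B$ is in hand, absorbing it into the ample divisor uses only that $h^0$ is monotone under adding an effective divisor and that any Cartier divisor is dominated by a large multiple of a fixed ample one (as in Lemma~\ref{lem:bound}'s ambient setup). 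I expect no other obstacle; the statement is essentially a bookkeeping lemma ensuring $\kappa_\sigma$ depends only on the $\mathbb R$-linear (indeed $\mathbb Q$-linear) equivalence ray of $D$, which is exactly what is needed to freely rescale $\mathbb Q$-divisors in the proof of Theorem~\ref{thm:subadditivity}.
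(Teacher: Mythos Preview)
Your overall strategy is sound, but there is a sign slip in the key step for the direction $\kappa_\sigma^{\mathbb R}(D)\le\kappa_\sigma^{\mathbb R}(lD)$. You write $\lfloor mD\rfloor\ge\lfloor qlD\rfloor-B$ and then deduce $h^0(\lfloor mD\rfloor+A)\le h^0(\lfloor qlD\rfloor+A+B)$; but monotonicity of $h^0$ under adding an effective divisor goes the other way, so that inequality does not follow. What you actually need (and what your informal explanation about the finitely many remainders $sD$ really gives) is the bound $\lfloor mD\rfloor\le\lfloor qlD\rfloor+B$ for a fixed effective $B$: indeed $mD=qlD+sD$ with $0\le s<l$, and componentwise $\lfloor a+b\rfloor\le\lfloor a\rfloor+\lfloor b\rfloor+1$, so one may take any $B\ge\max_{0\le s<l}\lfloor sD\rfloor+(\text{reduced support of }D)$. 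With the inequality corrected, your absorption into an ample $A'$ and the comparison of $m^r$ with $q^r$ go through as you indicate.

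For comparison, the paper handles this direction by a cleaner multiplicative trick that avoids the rounding bookkeeping altogether: whenever $h^0(Y,\lfloor mD\rfloor+A)>0$, a nonzero section gives an injection $\mathcal O_Y(\lfloor mD\rfloor+A)\hookrightarrow\mathcal O_Y(l\lfloor mD\rfloor+lA)$, and since $l\lfloor mD\rfloor\le\lfloor lmD\rfloor$ one further embeds into $\mathcal O_Y(\lfloor lmD\rfloor+lA)$. Thus $h^0(\lfloor mD\rfloor+A)\le h^0(\lfloor m(lD)\rfloor+lA)$ directly, with the single fixed ample divisor $lA$ and no auxiliary $B$. Your argument passes from $m$ down to the nearest multiple $ql$ of $l$; the paper passes from $m$ up to $lm$. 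Both work, but the paper's route is shorter and sidesteps the error-term bound you had to introduce.
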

\begin{proof}
We prove that 
$\kappa_\sigma^{\mathbb R}(D) \ge \kappa_\sigma^{\mathbb R}(lD)$. 
Assume that $\kappa_\sigma^{\mathbb R}(lD) \ge 0$. 
Take $\delta\in\mathbb R$ with $\delta<\kappa_\sigma^{\mathbb R}(lD)$. 
Then for a Cartier divisor $A$ on $Y$, we have 
$$
\underset{m\to\infty}{\mathrm{lim~sup}}~
\frac{h^0(Y, \lfloor lmD \rfloor +A)}{m^\delta}
=:\gamma >0,  
$$
so we get 
$$
\underset{m\to\infty}{\mathrm{lim~sup}}~
\frac{h^0(Y, \lfloor mD \rfloor +A)}{m^\delta}
\ge 
\underset{m\to\infty}{\mathrm{lim~sup}}~
\frac{h^0(Y, \lfloor lmD \rfloor +A)}{(lm)^\delta}
=\gamma l^{\delta}>0. 
$$
This means that $\kappa_\sigma^{\mathbb R}(D) \ge \delta$, 
and hence $\kappa_\sigma^{\mathbb R}(D) \ge \kappa_\sigma^{\mathbb R}(lD)$. 
Next, we show that 
$\kappa_\sigma^{\mathbb R}(D)\le\kappa_\sigma^{\mathbb R}(lD)$. 
Assume that $\kappa_\sigma^{\mathbb R}(D)\ge0$. 
Take $\delta\in\mathbb R$ with $\delta<\kappa_\sigma^{\mathbb R}(D)$. 
Then for a Cartier divisor $A$ on $Y$, we have 
$$
\underset{m\to\infty}{\mathrm{lim~sup}}~
\frac{h^0(Y, \lfloor mD \rfloor +A)}{m^\delta}
>0.  
$$
If $h^0(Y,\lfloor mD\rfloor +A)>0$, we get injective morphisms 
$$
\mathcal O_Y(\lfloor mD \rfloor +A)
\hookrightarrow \mathcal O_Y(l\lfloor mD \rfloor +lA)
\hookrightarrow \mathcal O_Y(\lfloor lmD \rfloor +lA), 
$$
so 
$$
\underset{m\to\infty}{\mathrm{lim~sup}}~
\frac{h^0(Y, \lfloor m(lD) \rfloor +lA)}{m^\delta}
\ge 
\underset{m\to\infty}{\mathrm{lim~sup}}~
\frac{h^0(Y, \lfloor mD \rfloor +A)}{m^\delta}
>0,  
$$
which means that $\delta<\kappa_\sigma^{\mathbb R}(lD)$, 
and hence $\kappa_\sigma^{\mathbb R}(D) \le \kappa_\sigma^{\mathbb R}(lD)$. 
The second equality in the assertion follows from an argument 
similar to the above. 
\end{proof}
\begin{proof}[Proof of Theorem~\ref{thm:subadditivity}]
We first prove the first inequality. 
We may assume that 
$
\kappa_\sigma^{\mathbb R}\left(D|_{X_{\overline\eta}}\right)
\ge 0. 
$
Let $i$ be a positive integer such that 
$iD$, $i(K_X+\Delta)$ and $iE$ are Cartier. 
By Lemma~\ref{lem:multiple}, we have 
$
\kappa_\sigma^{\mathbb R}\left(iD|_{X_{\overline\eta}}\right)
= \kappa_\sigma^{\mathbb R}\left(D|_{X_{\overline\eta}}\right). 
$
Take $\delta\in\mathbb R$ with 
$
\delta < \kappa_\sigma^{\mathbb R}\left(iD|_{X_{\overline\eta}}\right). 
$
Then we have  
$$
\underset{m\to\infty}{\mathrm{lim~sup}}~
\frac{h^0\left(X_{\overline\eta}, imD|_{X_{\overline\eta}} +B\right)}{m^\delta} 
>0 
$$
for some ample Cartier divisor $B$ on $X_{\overline\eta}$. 
Let $A'$ be a sufficiently ample Cartier divisor on $X$. 
Then $h^0\left(X_{\overline\eta}, A'|_{X_{\overline\eta}}-B\right)>0$. 
Let $L$ be an ample divisor on $Y$ with $|L|$ free. 
Put $n:=\dim Y$. 
For each $m\ge1$ with $i|m$, applying Theorem~\ref{thm:relative2_generic} for 
$N=mD-m(K_{X/Y}+\Delta)$, we obtain that 
$$
f_* \mathcal O_X(m(K_{X/Y}+\Delta)+A'+N) \otimes \mathcal O_Y((n+1)L)
\cong f_* \mathcal O_X(mD+A') \otimes \mathcal O_Y((n+1)L)
$$
is generically globally generated. 
Set $A:=A' +(n+1)f^*L$. 
Then $f_*\mathcal O_X(mD+A)$ is generically globally generated
for each $m\ge 1$ with $i|m$ by the projection formula. 
Put 
$
r_m:=h^0\left(X_{\overline\eta}, 
mD|_{X_{\overline\eta}}+A|_{X_{\overline\eta}}\right)
$ 
for each $m\ge 1$ with $i|m$. 
Then there is a generically isomorphic injective morphism 
$$
\bigoplus^{r_m} \mathcal O_Y 
\hookrightarrow f_*\mathcal O_Y(mD+A). 
$$
Taking the tensor product with $\mathcal O_Y(mE)$, we obtain 
$$
\bigoplus^{r_m} \mathcal O_Y(mE)
\hookrightarrow f_*\mathcal O_Y(m(D+f^*E)+A) 
$$
by the projection formula. From this, we get that 
$$
h^0\left(X_{\overline\eta}, 
mD|_{X_{\overline\eta}}+A|_{X_{\overline\eta}} \right)
\cdot h^0(Y,mE) \le h^0(X, m(D+f^*E) +A) 
$$
for each $m\ge 1$ with $i|m$. 
By Lemma~\ref{lem:bound}, there is a $\gamma'\in\mathbb R_{>0}$ and 
an integer $m_0\ge 1$ such that 
$$
\frac{h^0(Y, mm_0E)}{m^{\kappa(E)}} \ge \gamma' 
$$
for each $m\gg0$. Replacing $i$ by $im_0$, we obtain that 
$$
\frac{h^0(Y, imE)}{m^{\kappa(E)}} \ge \gamma 
$$
for some $\gamma\in\mathbb R_{>0}$ and each $m\gg0$. 
Then, we get that 
\begin{align*}
& \underset{m\to\infty}{\mathrm{lim~sup}}~
\frac{h^0(X,im(D+f^*E)+A)}{m^{\delta+\kappa(E)}}
\\ & \ge
\underset{m\to\infty}{\mathrm{lim~sup}}~
\frac{h^0\left(X_{\overline\eta}, 
imD|_{X_{\overline\eta}}+A|_{X_{\overline\eta}} \right)
\cdot h^0(Y,imE)}{m^{\delta+\kappa(E)}}
\\ & =
\underset{m\to\infty}{\mathrm{lim~sup}}~
\frac{h^0\left(X_{\overline\eta}, 
imD|_{X_{\overline\eta}}+A|_{X_{\overline\eta}} \right)}{m^{\delta}}
\cdot \frac{h^0(Y,imE)}{m^{\kappa(E)}}
\\ & \ge 
\underset{m\to\infty}{\mathrm{lim~sup}}~
\frac{h^0\left(X_{\overline\eta}, 
imD|_{X_{\overline\eta}}+A|_{X_{\overline\eta}} \right)}{m^{\delta}}
\cdot \gamma
\\ & \ge 
\underset{m\to\infty}{\mathrm{lim~sup}}~
\frac{h^0\left(X_{\overline\eta}, imD|_{X_{\overline\eta}}+B \right)}
{m^{\delta}}
\cdot \gamma
\\ & >0. 
\end{align*}
Note that the inequality in the fifth line follows from 
$h^0\left(X_{\overline\eta}, A|_{X_{\overline\eta}} -B\right)>0$, 
which is equivalent to 
$h^0\left(X_{\overline\eta}, A'|_{X_{\overline\eta}} -B\right)>0$. 
The above inequalities mean that 
$$
\kappa_\sigma^{\mathbb R}(i(D+f^*E))
\ge \delta+\kappa(E), 
$$ 
and so 
$$
\kappa_\sigma^{\mathbb R}(i(D+f^*E))
\ge \kappa_\sigma^{\mathbb R}\left(iD|_{X_{\overline\eta}}\right) 
+\kappa(E). 
$$
Hence, we see from Lemma~\ref{lem:multiple} again that 
\begin{align*}
\kappa_\sigma^{\mathbb R}(D+f^*E)
\ge 
\kappa_\sigma^{\mathbb R}\left(
D|_{X_{\overline\eta}}\right)
+\kappa(E). 
\end{align*}
By an argument similar to the above, we see that 
$$
\kappa_\sigma(D+f^*E) \ge 
\kappa_\sigma\left(D|_{X_{\overline\eta}} \right)+\kappa(E). 
$$

Next, we show the second inequality of the assertion. 
We may assume that $\kappa_\sigma^{\mathbb R}(E)\ge 0$. 
Take $\delta\in\mathbb R$ so that $\delta<\kappa_\sigma^{\mathbb R}(E)$. 
Then 
$$
\underset{m\to\infty}{\mathrm{lim~sup}}~
\frac{h^0\left(Y, mE +C\right)}{m^\delta} >0 
$$
for some ample Cartier divisor $C$. 
By the above argument and the projection formula, 
we have the generically isomorphic injective morphism 
$$
\bigoplus^{r_m} \mathcal O_Y(mE+C)
\hookrightarrow f_*\mathcal O_Y(m(D+f^*E) +A +f^*C)
$$
for each $m\ge1$ with $i|m$. From this, we get that 
\begin{align*}
& h^0\left(X_{\overline\eta}, mD|_{X_{\overline\eta}}\right) 
\cdot h^0(Y,mE+C)
\\ \le & 
h^0\left(X_{\overline\eta}, mD|_{X_{\overline\eta}}+A|_{X_{\overline\eta}}\right)
\cdot h^0(Y,mE+C)
\\ \le & 
h^0(X, m(D+f^*E)+A+f^*C). 
\end{align*}
Note that $h^0\left(X_{\overline\eta},A|_{X_{\overline\eta}}\right)>0$ 
by the choice of $A'$. 
Using the same argument as above, we see that 
$$
\kappa_\sigma^{\mathbb R}(D+f^*E)
\ge \kappa\left(D|_{X_{\overline\eta}}\right) 
+\kappa_\sigma^{\mathbb R}(E)
$$
and 
$$
\kappa_\sigma(D+f^*E)
\ge \kappa\left(D|_{X_{\overline\eta}}\right) 
+\kappa_\sigma(E). 
$$
\end{proof} 
Next, we consider the case when the geometric generic fiber is not $F$-pure. 
The following example shows that algebraic fiber spaces constructed in~\cite{CEKZ21} violate the inequality in Theorem~\ref{thm:subadditivity}. 
\begin{eg} \label{eg:CEKZ}
We use the same notation as in \cite[\S 2]{CEKZ21}. 
To show the existence of counterexamples to Theorem~\ref{thm:subadditivity}, 
it is enough to show that $\kappa_\sigma(K_{X^{(m)}}) =-\infty$ for $m>pl$ 
(cf. \cite[Theorem~2.4]{CEKZ21}). 
Note that $K_{X^{(m)}}$ is relatively nef over the base space. 
 
Fix an integer $m>pl$. 
Let $\mu$ be a positive integer such that $A':=\pi^*T+\mu f^*D'$ is ample. 
Take an integer $\nu\gg0$. 
Put $A:=\nu \sum_{i=1}^m \mathrm{pr}_i^* A'$. 
It is enough to show that 
$$
H^0(X^{(m)}, nK_{X^{(m)}} +A) =0
$$
for each $n\gg0$. Since 
$$
nK_{S/C} +\nu A' 
\sim nq\pi^*T -nf^*D' +\nu \pi^*T +\mu\nu f^*D'
= (nq+\nu) \pi^*T +(\mu\nu -n)f^*D', 
$$
we have 
\begin{align*}
f_*(\omega_{S/C}^n(\nu A')) 
& \cong 
g_*\left(\mathcal O_P((nq+\nu)T +(\mu\nu -n) g^*D') 
\otimes \bigoplus_{i=1}^{l-1}\mathcal O_P(-irT -ig^*D')
\right)
\\ & \cong 
\bigoplus_{i=1}^{l-1} g_* \mathcal O_P((nq+\nu -ir)T +(\mu\nu -n-i)g^*D')
\\ & \cong 
\bigoplus_{i=1}^{l-1} S^{nq+\nu-ir}(\mathcal E) \otimes \mathcal O_C((\mu\nu-n-i)D). 
\end{align*}
From this, we obtain that 
\begin{align*}
& f^{(m)}_* \omega_{X^{(m)}}^n(A)
\\ & \cong 
f^{(m)}_* \omega_{X^{(m)}/C}^n(A) \otimes \omega_C^n
\\ & \cong 
f^{(m)}_* \left( \bigotimes_{i=1}^m \mathrm{pr}_i^*(\omega_{S/C}^n(\nu A')) \right) \otimes \omega_C^n
\\ & \cong 
\left( \bigotimes_{i=1}^m f_* (\omega_{S/C}^n(\nu A') ) \right) \otimes \omega_C^n
\\ & \cong 
\left( \bigoplus_{1\le i_1,\ldots, i_m \le l-1} \bigotimes_{j=1}^m S^{nq+\nu -i_jr}(\mathcal E) \right) \otimes \mathcal O_C\left(\left(n(pl-m) +m\mu\nu -\sum_{j=1}^m i_j\right)D'\right). 
\end{align*}
Since $m>pl$, this sheaf is anti-ample when $n\gg0$. 
Note that $\mathcal E$ is anti-nef. 
Hence, 
$$
H^0(X^{(m)}, nK_{X^{(m)}}+A) 
=H^0\left(C, f^{(m)}_*\omega_{X^{(m)}}^n(A)\right)
=0. 
$$
\end{eg}
\section{Algebraic fiber spaces over varieties of general type}
\label{section:relatively_semi-ample}
In this section, we prove that Iitaka's conjecture holds true if the geometric generic fiber has mild singularities, if the canonical divisor of the total space is relatively semi-ample, and if the base space is of general type. 
\begin{thm} \label{thm:relatively_semi-ample}
Let $X$ be a normal projective variety and let $\Delta$ be an effective $\mathbb Z_{(p)}$-Weil divisor on $X$. 
Let $Y$ be a regular projective variety and let $f:X\to Y$ be a surjective morphism. 
Suppose that 
\begin{itemize}
\item $K_X+\Delta$ is $\mathbb Z_{(p)}$-Cartier, 
\item $\left(X_{\overline\eta},\Delta|_{X_{\overline\eta}}\right)$ is $F$-pure, 
\item $K_X+\Delta$ is relatively semi-ample over $Y$, and 
\item $Y$ is of general type. 
\end{itemize}
Then 
$$
\kappa(K_X+\Delta) \ge \kappa\left(K_{X_{\overline\eta}}+\Delta|_{X_{\overline\eta}}\right) +\kappa(Y).
$$
\end{thm}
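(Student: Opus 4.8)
The plan is to deduce the asserted inequality for the ordinary Iitaka dimension $\kappa$ from the analogous inequality for the numerical Kodaira dimension $\kappa_\sigma$ supplied by Theorem~\ref{thm:subadditivity}; the passage from $\kappa_\sigma$ back to $\kappa$ is precisely where the relative semi-ampleness hypothesis is used. Set $n:=\dim Y$. Since $Y$ is of general type, $\kappa(Y)=\kappa(K_Y)=n$, hence also $\kappa_\sigma(K_Y)=n$ (using $\kappa\le\kappa_\sigma\le\dim$). We may assume $r:=\kappa\left(K_{X_{\overline\eta}}+\Delta|_{X_{\overline\eta}}\right)\ge 0$, the case $r=-\infty$ being trivial. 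Finally, the restriction of the $f$-semi-ample divisor $K_X+\Delta$ to the geometric generic fiber is semi-ample, in particular nef, so all hypotheses of Theorem~\ref{thm:subadditivity} hold for $f$.

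First I would apply Theorem~\ref{thm:subadditivity} with the choices $D:=K_{X/Y}+\Delta$ and $E:=K_Y$. Then $D$ is $\mathbb Q$-Cartier (as $K_X+\Delta$ is $\mathbb Z_{(p)}$-Cartier and $K_Y$ is Cartier), $D-(K_{X/Y}+\Delta)=0$ is nef, $D|_{X_{\overline\eta}}=K_{X_{\overline\eta}}+\Delta|_{X_{\overline\eta}}$, and $D+f^*E=K_X+\Delta$. The second inequality of Theorem~\ref{thm:subadditivity} then gives
$$\kappa_\sigma(K_X+\Delta)\ \ge\ \kappa\left(K_{X_{\overline\eta}}+\Delta|_{X_{\overline\eta}}\right)+\kappa_\sigma(K_Y)\ =\ r+n .$$

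Next I would pass to the relative ample model. Since $K_X+\Delta$ is semi-ample over $Y$, for suitable $m_0$ with $m_0(K_X+\Delta)$ Cartier the morphism $f^*f_*\mathcal O_X(m_0(K_X+\Delta))\to\mathcal O_X(m_0(K_X+\Delta))$ is surjective, yielding a $Y$-morphism $\pi:X\to W$ onto a projective $Y$-scheme $h:W\to Y$; after Stein-factorizing and normalizing we may arrange $\pi_*\mathcal O_X=\mathcal O_W$, and then there is an $h$-ample Cartier divisor $L$ on $W$ with $m_0(K_X+\Delta)\sim\pi^*L$. Base-changing the construction to the generic point of $Y$ shows $\dim W-\dim Y=\kappa\left(K_{X_{\overline\eta}}+\Delta|_{X_{\overline\eta}}\right)=r$, so $\dim W=n+r$. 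Because $\pi_*\mathcal O_X=\mathcal O_W$ we have $\kappa(K_X+\Delta)=\kappa(m_0(K_X+\Delta))=\kappa(\pi^*L)=\kappa(L)$ and, using Lemma~\ref{lem:multiple} together with the fact that $\pi_*\mathcal O_X(A_X)$ embeds into a finite direct sum of copies of an ample line bundle on $W$ (for any ample $A_X$ on $X$), also $\kappa_\sigma(K_X+\Delta)=\kappa_\sigma(L)$. Combining with the previous paragraph, $\kappa_\sigma(L)\ge n+r=\dim W$, hence $\kappa_\sigma(L)=\dim W$; since a $\mathbb Q$-Cartier divisor whose numerical dimension equals the dimension of the ambient variety is big (an elementary argument, cutting sections down by a sufficiently positive effective divisor, valid over any field), $L$ is big, so $\kappa(L)=\dim W=n+r$. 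Therefore $\kappa(K_X+\Delta)=n+r=\kappa\left(K_{X_{\overline\eta}}+\Delta|_{X_{\overline\eta}}\right)+\kappa(Y)$, as desired.

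The genuinely new input over characteristic zero is already packaged in Theorem~\ref{thm:subadditivity}, so the step I expect to require the most care is the $\kappa_\sigma$-to-$\kappa$ bookkeeping: constructing and base-changing the relative ample model when $X$ is only normal (relative finite generation does follow from relative semi-ampleness and is characteristic-free, but one must really ensure $\pi_*\mathcal O_X=\mathcal O_W$ so that $L$ is a genuine $\mathbb Q$-Cartier divisor, and that $\dim W=n+r$ is read off correctly over the imperfect field $k(Y)$), together with verifying that the two soft facts about numerical dimension used above --- invariance $\kappa_\sigma(\pi^*L)=\kappa_\sigma(L)$ and the implication ``$\kappa_\sigma$ maximal $\Rightarrow$ big'' --- indeed hold in characteristic $p$. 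Each of these is routine but should be checked rather than merely quoted from the characteristic-zero literature.
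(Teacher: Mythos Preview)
Your argument is correct, but it takes a genuinely different route from the paper's own proof.

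The paper does \emph{not} invoke Theorem~\ref{thm:subadditivity}. Instead it goes back to Theorem~\ref{thm:relative2_generic} and argues by direct section counting: it fixes a sufficiently ample $A$ so that $f_*\mathcal O_X(m(K_{X/Y}+\Delta)+A)$ is generically globally generated for all admissible $m$, builds the relative ample model $g:X\to Z$, $h:Z\to Y$ with $g^*D\sim i(K_{X/Y}+\Delta)$, chooses a nonzero $\varphi:g_*\mathcal O_X(A)\to\mathcal O_Z(m_0(D+h^*H))$ with image $\mathcal I$, and uses $h$-ampleness of $D$ to make $h_*(\varphi\otimes\mathcal O_Z(mD))$ surjective for $m\gg0$. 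Twisting the resulting generically injective morphisms by $\omega_Y^{i(m+m_0)}\otimes\mathcal O_Y(-m_0H)$ and using that $K_Y$ is big yields
\[
\mathrm{rank}\,h_*\mathcal I(mD)\cdot h^0(Y,i(m+m_0)K_Y-m_0H)\ \le\ h^0(X,i(m+m_0)(K_X+\Delta)),
\]
and each factor on the left grows like $m^{r}$ and $m^{n}$ respectively.

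Your approach packages the hard input into Theorem~\ref{thm:subadditivity} and then reduces the $\kappa_\sigma$-inequality to bigness of the tautological divisor $L$ on the ample model $W$ via $\kappa_\sigma(L)=\kappa_\sigma(\pi^*L)\ge\dim W$. This is cleaner and in fact yields the equality $\kappa(K_X+\Delta)=r+n$ rather than just the inequality. The trade-offs are: (i) you are using a strictly stronger theorem (Theorem~\ref{thm:subadditivity}) than the paper needs, though since that theorem already rests on Theorem~\ref{thm:relative2_generic} there is no real circularity; (ii) as you yourself flag, you must verify the two ``soft'' facts about $\kappa_\sigma$ in positive characteristic. Both are indeed routine: for $\kappa_\sigma(\pi^*L)\le\kappa_\sigma(L)$, note that $\pi_*\mathcal O_X(A_X)$ is torsion-free, hence embeds into its reflexive hull, and then into $\mathcal O_W(A_W)^{\oplus c}$ for $A_W$ ample enough that $(\pi_*\mathcal O_X(A_X))^\vee\otimes\mathcal O_W(A_W)$ is globally generated; and ``$\kappa_\sigma(L)=\dim W\Rightarrow L$ big'' is characteristic-free. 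The paper's more explicit approach sidesteps both of these verifications at the cost of a longer, more delicate computation on $Z$.
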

\begin{proof}
Let $A'$ be a sufficiently ample Cartier divisor on $X$. 
Let $L$ be an ample divisor on $Y$ with $|L|$ free. 
Put $A:=A'+f^*(K_Y+(\dim Y+1)L)$. 
Then for each $m\in\mathbb Z_{>0}$ with $m(K_X+\Delta)$ is Cartier, 
\begin{align*}
& f_*\mathcal O_X(m(K_{X/Y}+\Delta) +A)
\\ \cong & f_*\mathcal O_X(m(K_{X/Y}+\Delta) +A') \otimes \omega_Y 
\otimes \mathcal O_Y((\dim Y+1)L)
\end{align*}
by the projection formula, and the sheaf is generically globally generated 
by Theorem~\ref{thm:relative2_generic}. 
Take an $i\in\mathbb Z_{>0}$ so that $i(K_{X/Y}+\Delta)$ is 
Cartier and relatively free over $Y$. 
Then we have a normal projective variety $Z$, 
morphisms $g:X\to Z$ and $h:Z\to Y$ with $g_*\mathcal O_X \cong \mathcal O_Z$ 
and $h\circ g =f$, and an $h$-ample Cartier divisor $D$ on $Z$ such that 
$g^*D\sim i(K_{X/Y}+\Delta)$. 
Let $H$ be an ample divisor on $Y$ such that $D+h^*H$ is ample. 
Take an $m_0\in\mathbb Z_{>0}$ so that 
$$
\mathcal Hom\big(g_*\mathcal O_X(A), \mathcal O_Z(m_0(D+h^*H)) \big)
\cong \left(g_*\mathcal O_X(A) \right)^* \otimes \mathcal O_Z(m_0(D+h^*H)) 
$$
has a non-zero global section $\varphi'$. 
Let $\mathcal I (\ne 0)$ be the image of 
$$
\varphi':g_*\mathcal O_X(A)\to \mathcal O_Z(m_0(D+h^*H)).
$$ 
Let $\varphi:g_*\mathcal O_X(A) \to \mathcal I$ be the induced morphism. 
Since $D$ is $h$-ample, there is an $m_1\in\mathbb Z_{>0}$ such that 
$$
f_* \mathcal O_X(im(K_{X/Y}+\Delta) +A)
\cong h_* \big( (g_*\mathcal O_X(A) ) (mD) \big)
\xrightarrow{h_* \left(\varphi \otimes \mathcal O_Z(mD)\right)}
h_* \mathcal I(mD)
$$
is surjective for each $m\ge m_1$.  
Since the source of the morphism is generically globally generated, 
we obtain the injective morphism 
\begin{align*}
\bigoplus^{r_m} \mathcal O_Y
\hookrightarrow & h_* \mathcal O_Z((m+m_0)D + m_0h^*H) 
\\ \cong & f_* \mathcal O_X(i(m+m_0)(K_{X/Y}+\Delta)) \otimes \mathcal O_Y(m_0H)
\end{align*}
for each $m\ge m_1$, where $r_m :=\mathrm{rank}\,h_*\mathcal I(mD)$.
Taking the tensor product with $\omega_Y^{i(m+m_0)}\otimes\mathcal O_Y(-m_0H)$, 
we get the injective morphism 
$$
\bigoplus^{r_m} \mathcal O_Y(i(m+m_0)K_Y -m_0H)
\hookrightarrow f_*\mathcal O_X(i(m+m_0)(K_X+\Delta)), 
$$
from which we obtain the inequality 
$$
r_m \cdot h^0(Y, i(m+m_0)K_Y -m_0H) 
\le h^0(X, i(m+m_0)(K_X+\Delta)) 
$$
for each $m\ge m_1$. 
Since $D$ is $h$-ample, there is a $C_1\in\mathbb R_{>0}$ such that 
$$
r_m =\mathrm{rank}\,h_*\mathcal I(mD) 
=h^0\left(Z_\eta, \mathcal I|_{Z_\eta}\left(mD|_{Z_\eta}\right) \right)
\ge C_1 m^{\dim Z_\eta} 
=C_1 m^{\kappa\left(K_{X_{\overline\eta}}+\Delta|_{X_{\overline\eta}}\right)}
$$
for each $m\gg0$. 
Also, since $K_Y$ is big, there is a $C_2\in\mathbb R_{>0}$ such that 
$$
h^0(Y,i(m+m_0)K_Y -m_0H)
\ge 
C_2 m^{\kappa(Y)} 
$$ 
for each $m\gg0$. Hence, we get that 
$$
\kappa(K_X+\Delta) \ge \kappa\left(K_{X_{\overline\eta}}+\Delta|_{X_{\overline\eta}}\right) +\kappa(Y).
$$
\end{proof}
\section{Algebraic fiber spaces over varieties of maximal Albanese dimension}
\label{section:mAd}
In this section, we treat algebraic fiber spaces over varieties of maximal Albanese dimension. In this case, we assume that the total space is $F$-pure over a dense open subset of the base space, which is weaker than the assumption that the geometric generic fiber is $F$-pure. 
\begin{defn} \label{defn:mAd}
Let $Y$ be a projective variety. 
We say that $Y$ is of \textit{maximal Albanese dimension} if there exists 
a morphism $\pi:Y\to A$ to an abelian variety $A$ that is 
generically finite over its image. 
\end{defn}
\begin{thm} \label{thm:relative_mAd} \samepage
Let $\bar f:\bar X \to \bar Y$ be a surjective morphism from 
an equi-dimensional projective $k$-scheme $\bar X$ 
to a projective variety $\bar Y$ of maximal Albanese dimension. 
Let $\bar \pi:\bar Y\to B$ be a morphism to an abelian variety $B$ 
that is generically finite over its image. 
Let $B_0\subseteq B$ be a dense open subset such that 
$Y:=\bar\pi^{-1}(B_0) \ne\emptyset$. 
Put $X:=\bar f^{-1}(Y)$ and $f:=\bar f|_X:X\to Y$. 
Let $\bar A$ be a sufficiently ample Cartier divisor on $\bar X$ 
and set $A:=\bar A|_X$. 
Assume that $X$ satisfies $S_2$ and $G_1$. 
Let $\Delta$ be an effective $\mathbb Z_{(p)}$-AC divisor on $X$. 
Suppose further that 
\begin{itemize}
\item $K_X+\Delta$ is $\mathbb Z_{(p)}$-Cartier, 
\item $(X,\Delta)$ is $F$-pure over a dense open subset of $Y$, 
\item $(K_X+\Delta)|_{X_\eta}$ is nef, where $X_\eta$ is the generic fiber of $f$, and 
\item $Y$ is normal. 
\end{itemize}
Let $\bar N$ be a Cartier divisor on $\bar X$ such that $\bar A+\bar N$ is ample 
and $\bar N|_{X_\eta}$ is nef $($e.g. $N$ is nef$)$. 
Set $N:=\bar N|_X$. 
Let $m\ge1$ be an integer such that $m(K_X+\Delta)$ is Cartier. 
Then the sheaf 
$$
f_*\mathcal O_X(m(K_X+\Delta)+A+N)
$$
is pseudo-effective in the sense of Definition~\ref{defn:positivity}. 
\end{thm}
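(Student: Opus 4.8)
The plan is to derive the statement from Theorem~\ref{thm:PS-type_generic} by using the geometry of the abelian variety $B$ to absorb the ample twist that the latter produces. Note that Theorem~\ref{thm:PS-type_generic} does apply in our situation: its hypotheses concern only the generic fiber $X_\eta$, and $F$-purity of $(X,\Delta)$ over a dense open subset of $Y$ forces $(X_\eta,\Delta|_{X_\eta})$ to be $F$-pure because $F$-purity is stable under localization. Since pseudo-effectivity of a coherent sheaf depends only on its restriction to a big open subset and is independent of the ample line bundle chosen (Remark~\ref{rem:weak pos}), we may first replace $\bar Y$ by a Stein-type model so that $\bar\pi$ becomes finite onto its image $Z:=\bar\pi(\bar Y)$, and replace $B$ by the abelian subvariety it generates. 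Fixing a symmetric very ample line bundle $\mathcal M_B$ on $B$, the line bundle $\mathcal M:=\bar\pi^*(\mathcal M_B|_Z)$ is then ample on $\bar Y$ with $|\mathcal M|$ free, and Theorem~\ref{thm:PS-type_generic} with $\mathcal L=\mathcal M$ (so $j=1$) gives, writing $\mathcal G:=f_*\mathcal O_X(m(K_X+\Delta)+A+N)$ and $l_1:=m(\dim Y+1)$, that $S^{k}\mathcal G\otimes\mathcal M^{kl}$ is generically globally generated for every $k\ge1$ and $l\ge l_1$.

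The essential step is to pass to the isogenies $[d]\colon B\to B$ with $p\nmid d$. Set $\bar Y_d:=\bar Y\times_{B,[d]}B$ with projection $\psi_d\colon\bar Y_d\to\bar Y$ (finite étale) and $\bar X_d:=\bar X\times_{\bar Y}\bar Y_d$; writing $\bar\pi_d$ for the induced finite map $\bar Y_d\to Z_d:=[d]^{-1}(Z)$ and $\mathcal A_d:=\bar\pi_d^*(\mathcal M_B|_{Z_d})$, one has that $\mathcal A_d$ is ample on $\bar Y_d$ with $|\mathcal A_d|$ free and $\mathcal A_d^{\otimes d^2}\cong\psi_d^*\mathcal M$. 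All the hypotheses are preserved under the étale base change, so Theorem~\ref{thm:PS-type_generic} applies to $\bar f_d\colon\bar X_d\to\bar Y_d$; combined with $\psi_d^*\mathcal G\cong(f_d)_*\mathcal O_{X_d}(\cdots)$ (flat base change), it yields that $S^{k}(\psi_d^*\mathcal G)\otimes\mathcal A_d^{kl}$ is generically globally generated for $l\ge l_1$ — the crucial gain being that here the twist is $\mathcal A_d^{kl_1}$, not $\mathcal A_d^{kl_1d^2}=\psi_d^*\mathcal M^{kl_1}$. Taking $k=d^{2}t$, so that $\mathcal A_d^{kl_1}=\psi_d^*\mathcal M^{tl_1}$, and pushing the generic surjection on $\bar Y_d$ down to $\bar Y$ via $\psi_{d*}\mathcal O_{\bar Y_d}\cong\bigoplus_{P\in\mathrm{Pic}^0(B)[d]}\bar\pi^*P$ — taking $B[d]$-invariants, which is exact since $|B[d]|$ is prime to $p$ — one obtains that $S^{k}\mathcal G\otimes\mathcal M^{tl_1}$ is generically generated by $\bigoplus_{P}H^0\!\big(\bar Y,S^{k}\mathcal G\otimes\mathcal M^{tl_1}\otimes\bar\pi^*P\big)\otimes\bar\pi^*P^{-1}$, the sum over $P\in\mathrm{Pic}^0(B)[d]$.

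To eliminate the twists $\bar\pi^*P^{-1}$, tensor with $\mathcal M^{2}=\bar\pi^*\mathcal M_B^{2}$: writing $P^{-1}=Q^{\otimes2}$ with $Q\in\mathrm{Pic}^0(B)$, the line bundle $\mathcal M_B^{2}\otimes P^{-1}\cong(\mathcal M_B\otimes Q)^{\otimes2}$ is basepoint-free on $B$ by Mumford's theorem, hence $\bar\pi^*(\mathcal M_B^{2}\otimes P^{-1})$ is globally generated on $\bar Y$; multiplying sections then shows that $S^{k}\mathcal G\otimes\mathcal M^{\,tl_1+2}$ is generically globally generated whenever $k=d^{2}t$. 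Now, given $\alpha\ge1$, choose $d$ prime to $p$ with $d^{2}>\alpha l_1$ and put $\beta:=2d^{2}$; then $\alpha\beta=d^{2}(2\alpha)$ and $\beta\ge 2\alpha l_1+2$, so $S^{\alpha\beta}\mathcal G\otimes\mathcal M^{\beta}$ is generically globally generated. Since $\mathcal G$ is locally free over a dense open of $Y$ (generic flatness and base change), this forces the generic global generation of $\big(S^{\alpha\beta}(\mathcal G/\mathrm{tors})\big)^{\vee\vee}\otimes\mathcal M^{\beta}$, and as $\mathcal M$ is ample this is precisely pseudo-effectivity in the sense of Definition~\ref{defn:positivity}(4).

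I expect the descent along the isogeny to be the main obstacle. One must build the finite étale tower $\bar X_d\to\bar Y_d$ so that every hypothesis transfers verbatim — $S_2$ and $G_1$ of the total space, $\mathbb Z_{(p)}$-Cartierness of $K_X+\Delta$, $F$-purity over a dense open, nefness along the generic fiber, and in particular the ``sufficient ampleness'' of $\bar A$ and the ampleness of $\bar A+\bar N$, which are defined by conditions (relative freeness, surjectivity of the trace maps $f_*\phi^{(e)}$, Castelnuovo--Mumford regularity) that are compatible with flat base change and therefore propagate from $\bar f$ to all $\bar f_d$ once arranged for $\bar f$ itself. One then has to follow the $B[d]$-action carefully through $\psi_{d*}$ in order to turn the upstairs generic global generation into a controlled downstairs generation by $\mathrm{Pic}^0(B)$-twisted sections; this is exactly where the maximal Albanese dimension of $Y$ enters, through the contraction $[d]^*\mathcal M_B\cong\mathcal M_B^{\otimes d^2}$ and the basepoint-freeness of $2$-ample line bundles on $B$. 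All the $F$-singularity input is already packaged inside Theorem~\ref{thm:PS-type_generic}, so the residual work is this abelian-variety bookkeeping together with the (standard but slightly delicate) reductions making $\bar\pi$ finite onto its image.
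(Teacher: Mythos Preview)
Your overall strategy is the same as the paper's: pull back along multiplication-by-$d$ isogenies on $B$ (with $p\nmid d$), apply Theorem~\ref{thm:PS-type_generic} upstairs, and then descend so that the ample twist shrinks by a factor of $d^2$. The execution, however, differs in two places.

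First, the initial reduction. The paper does not try to make $\bar\pi$ finite; instead it observes that $\pi^*\pi_*\mathcal G\to\mathcal G$ is generically surjective and thereby \emph{replaces $\bar f$ by $\bar\pi\circ\bar f:\bar X\to B$}, so that $\bar Y=B$ and $\mathcal L$ is genuinely ample on the new base. Your ``Stein-type model'' reduction changes $\bar Y$, hence $Y$, $X$ and $f$, and you do not explain how to transport pseudo-effectivity back to the original $Y$; this can be repaired (the Stein map is birational and pseudo-effectivity only sees the generic point), but the paper's route sidesteps the issue entirely.

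Second, the descent. After obtaining that $\sigma^{d*}\mathcal G\otimes\mathcal H$ is generically globally generated (your $\psi_d^*\mathcal G\otimes\mathcal A_d^{l_1}$), the paper simply recycles the tensor trick from the proof of Theorem~\ref{thm:relative}: choose $l$ with $(\sigma^d_*\mathcal O_B)\otimes(\sigma^d_*\mathcal O_B)^\vee\otimes\mathcal H^l$ globally generated and use the evaluation map, obtaining that $S^{\alpha lt^{2d}}(\mathcal G)\otimes\mathcal H^{(\alpha+1)l}$ is generically globally generated. Your descent via the character decomposition $\psi_{d*}\mathcal O_{\bar Y_d}\cong\bigoplus_{P}\bar\pi^*P$ and $B[d]$-invariants, followed by Lefschetz on $B$ to kill the $\mathrm{Pic}^0$-twists, is correct and pleasantly explicit, but it invokes extra abelian-variety input (and the square-root step $P^{-1}=Q^{\otimes 2}$ should be phrased as a statement checked after base change to $\bar k$, since $\widehat B(k)$ need not be $2$-divisible). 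The paper's version is more uniform because the very same descent argument is used verbatim for Theorem~\ref{thm:relative} (with Frobenius in place of $\sigma$), so nothing new is needed here.
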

\begin{rem} \label{rem:nonnormal}
The reason why $Y$ is assumed to be normal is that the pseudo-effectivity is defined over normal varieties. In the case when $Y$ is not necessarily normal, we can prove that 
$$
\mathbb B_-\big(f_*\mathcal O_X(m(K_X+\Delta)+N+A) \big)
\ne Y. 
$$
For the definition of $\mathbb B_-$ for coherent sheaves, 
see \cite[\S 4]{Eji19d}. 
\end{rem}
\begin{proof}
Set $\pi:=\pi|_Y:Y\to B$. 
Let $\mathcal L$ be a symmetric ample line bundle on $B$ 
(i.e., $(-1)_B^*\mathcal L\cong \mathcal L$) with $|\mathcal L|$ free. 
Put 
$$
\mathcal H:=\mathcal L^{m(n+1)} 
\quad \textup{and} \quad 
\mathcal G:=f_*\mathcal O_X(m(K_X+\Delta)+A+N). 
$$ 
Take an integer $\alpha\ge1$.  
We show that there is an integer $\beta\ge1$ such that 
$$
S^{\alpha\beta}(\mathcal G) \otimes (\pi^*\mathcal H)^\beta
$$
is generically globally generated. 
If this holds, then the assertion follows from \cite[Lemma~2.14~a)]{Vie95}.
Since $\pi$ is generically finite, 
the natural morphism $\pi^*\pi_* \mathcal G \to \mathcal G$
is generically surjective, so the induced morphism 
$$
\pi^*\left( S^{\alpha\beta}(\pi_*\mathcal G) \otimes \mathcal H^\beta \right)
\cong 
S^{\alpha\beta}\left(\pi^*\pi_*\mathcal G \right) \otimes (\pi^*\mathcal H)^\beta
\to 
S^{\alpha\beta}(\mathcal G) \otimes (\pi^*\mathcal H)^\beta
$$
is also generically surjective.
Therefore, replacing $\bar f:\bar X\to \bar Y$ and $Y$ with 
$\bar\pi\circ \bar f:\bar X\to B$ and $B_0$, respectively,
we may assume that $\bar Y=B$ (with loss of surjectivity).
Let $t\ge 2$ be an integer with $p\nmid t$. 
Let $\sigma:B\to B$ be a morphism sending $b\in B$ to $t\cdot b\in B$. 
Then ${\sigma^e}^*\mathcal L \cong \mathcal L^{t^{2e}}$ 
and ${\sigma^e}^*\mathcal H \cong \mathcal H^{t^{2e}}$ 
for each $e\ge 1$. 
Fix an integer $d\ge 1$ with $t^{2d}\ge \alpha+1$. 
Let $B^{(d)}$ denote $B$ when we regard it as the source of $\sigma^d:B\to B$. 
Now, we have the following cartesian diagram:
$$
\xymatrix{
X_{B^{(d)}} \ar[d]_-{f_{B^{(d)}}} \ar[r]^-{(\sigma^d)_X} & X \ar[d]^-f \\
B^{(d)} \ar[r]_-{\sigma^d}& B
}
$$
Since $\sigma^d$ is \'etale, we see that 
\begin{itemize}
\item the morphism 
\begin{align*}
& {f_{B^{(d)}}}_*\phi^{(e)}_{\left(X_{B^{(d)}},\Delta_{B^{(d)}}\right)}
\left(m\left( K_{X_{B^{(e)}}}+\Delta_{B^{(d)}}\right)+ (A+N)_{B^{(e)}} \right):
\\ & {F_{B^{(d)}}^e}_* {f_{B^{(d)}}}_* \left( ((m-1)p^e+1)
\left(K_{X_{B^{(d)}}}+\Delta_{B^{(d)}}\right) 
+p^e(A+N)_{B^{(d)}} \right)
\\ & \to {f_{B^{(d)}}}_* \mathcal O_{X_{B^{(d)}}}\left( 
m\left( K_{X_{B^{(d)}}}+\Delta_{B^{(d)}}\right) +(A+N)_{B^{(d)}} \right)
\end{align*}
is generically surjective for each $e\ge 1$ such that 
$(p^e-1)(K_X+\Delta)$ is Cartier, and  
\item the invertible sheaf 
\begin{align*}
\mathcal O_{X_{B^{(d)}}}\left(m \left(K_{X_{B^{(d)}}}+\Delta_{B^{(d)}}\right)
+(A+N)_{B^{(d)}} \right)
\cong \big( \mathcal O_X(m(K_X+\Delta)+A+N) \big)_{B^{(d)}}
\end{align*}
is free on the generic fiber of $f$ (over its image). 
Note that $K_{X_{B^{(d)}}}$ is the pullback of $K_X$, 
since $X_{B^{(d)}}\to X$ is \'etale. 
\end{itemize}
Therefore, we can apply Theorem~\ref{thm:PS-type_generic} to 
$f_{B^{(d)}}:X_{B^{(d)}}\to {B^{(d)}}$ 
(this morphism is not necessarily surjective, but it is not an issue), 
from which we obtain that 
\begin{align*}
& {f_{B^{(d)}}}_*\mathcal O_{X_{B^{(d)}}}\left(m\left(K_{X_{B^{(d)}}}+\Delta_{B^{(d)}}\right) +(A+N)_{B^{(d)}}\right) \otimes \mathcal H
\\ & \cong 
{f_{B^{(d)}}}_*\big(\mathcal O_X\left(m\left(K_X+\Delta\right) +A+N\right)\big)_{B^{(d)}} \otimes \mathcal H
\\ & \cong 
{\sigma^d}^* \mathcal G \otimes \mathcal H
\end{align*}
is generically globally generated. 
Let $l\ge 1$ be an integer such that 
$$
(\sigma^d_*\mathcal O_B)
\otimes (\sigma^d_*\mathcal O_B) ^\vee
\otimes \mathcal H^l
$$
is globally generated. 
Then by an argument similar to that of the proof of Theorem~\ref{thm:relative}, 
we see that 
$
S^{\alpha l t^{2d}}(\mathcal G) \otimes \mathcal H^{(\alpha+1)l}
$
is generically globally generated. 
Since $t^{2d}\ge \alpha+1$ and $|\mathcal H|$ is free, 
$
S^{\alpha l t^{2d}}(\mathcal G) \otimes \mathcal H^{lt^{2d}}
$
is generically globally generated. 
Putting $\beta:=lt^{2d}$, we conclude the proof. 
\end{proof}
\begin{thm} \label{thm:relative2_mAd} \samepage
Let $\bar f:\bar X\to \bar Y$, $\bar\pi:\bar Y\to B$, $B_0$, $f:X\to Y$, 
$\bar A$ and $A$ be as in Theorem~\ref{thm:relative_mAd}. 
Assume that $X$ satisfies $S_2$ and $G_1$. 
Let $\Delta$ be an effective $\mathbb Z_{(p)}$-AC divisor on $X$. 
Suppose that 
\begin{itemize}
\item $K_X+\Delta$ is $\mathbb Z_{(p)}$-Cartier, 
\item $(X,\Delta)$ is $F$-pure over a dense open subset of $Y$, and 
\item $(K_X+\Delta)|_{X_\eta}$ is nef, where $X_\eta$ is the generic fiber of $f$. 
\end{itemize}
Let $\bar N$ be a Cartier divisor on $\bar X$ such that 
$\bar A+\bar N$ is ample and $N|_{X_\eta}$ is nef. 
Set $N:=\bar N|_X$. 
Let $\bar{\mathcal L}$ and $\bar{\mathcal H}$ be ample line bundles on $\bar Y$ 
with $|\bar{\mathcal L}|$ free, and 
put $\mathcal L:=\bar{\mathcal L}|_Y$ and $\mathcal H:=\bar{\mathcal H}|_Y$. 
Let $m\ge 1$ be an integer such that $m(K_X+\Delta)$ is Cartier. 
Then the sheaf 
$$
f_*\mathcal O_X(m(K_X+\Delta) +A +N) 
\otimes \mathcal L^{\dim Y} \otimes \mathcal H
$$
is generically generated by its global sections. 
\end{thm}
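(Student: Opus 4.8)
The plan is to follow the proof of Theorem~\ref{thm:relative2} (equivalently of Theorem~\ref{thm:relative2_generic}) almost verbatim, with two substitutions: the Frobenius descent along the base, which there relies on the regularity of $Y$, is replaced by the Popa--Schnell style descent carried out in the proof of Theorem~\ref{thm:PS-type}; and the input of Viehweg's weak positivity (Theorem~\ref{thm:relative}) is replaced by the pseudo-effectivity supplied by Theorem~\ref{thm:relative_mAd}. Since $\omega_Y$ does not occur in the sheaf to be generated, the bookkeeping is in fact lighter than in the proof of Theorem~\ref{thm:relative2}. Throughout put $n:=\dim Y$, write $K:=K_X+\Delta$ and $\mathcal G:=f_*\mathcal O_X(mK+A+N)$, and let $e_0\ge1$ be such that $(p^{e_0}-1)K$ is Cartier. (If $Y$ is not normal, one reads ``pseudo-effective'' as $\mathbb B_-(\mathcal G)\ne Y$ in the sense of Remark~\ref{rem:nonnormal}; the argument below runs with ``generically globally generated'' throughout, which is all that is needed.)

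First I would record the auxiliary fact that for every coherent sheaf $\mathcal F$ on $Y$ there is an integer $c_0\ge1$ such that
$$
\left({F_Y^e}_*(\mathcal F\otimes\mathcal H^c)\right)\otimes\mathcal L^n
$$
is globally generated for all $c\ge c_0$ and all $e\ge1$; this is exactly the first paragraph of the proof of Theorem~\ref{thm:relative2}: extend $\mathcal F$, $\mathcal H$, $\mathcal L$ to $\bar Y$, use finiteness of $F_{\bar Y}^e$ to identify $H^i\big(\bar Y,({F_{\bar Y}^e}_*(\bar{\mathcal F}\otimes\bar{\mathcal H}^c))\otimes\bar{\mathcal L}^{n-i}\big)$ with $H^i\big(\bar Y,\bar{\mathcal F}\otimes\bar{\mathcal H}^c\otimes\bar{\mathcal L}^{(n-i)p^e}\big)$, apply Fujita's vanishing theorem to kill these for $0<i\le n$ and $c\ge c_0$ uniformly in $e$, deduce $0$-regularity with respect to $\bar{\mathcal L}$, and invoke \cite[Theorem~1.8.5]{Laz04I}. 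Next, let $q_e$ and $r_e$ be the quotient and remainder of $(m-1)p^e+1$ divided by $m$. Shrinking the dense open $V\subseteq Y$ over which $(X,\Delta)$ is $F$-pure so that in addition $K$ and $N$ are nef over $V$ (possible because their restrictions to the generic fiber are nef), and running the argument of the proof of Theorem~\ref{thm:PS-type} over $V$ --- using that $\bar A$ is sufficiently ample and that $f_*\phi^{(e)}_{(X,\Delta)}(mK+A+N)$ is surjective over $V$ --- up to the point where the descent map is built, I obtain for each $e$ with $e_0\mid e$ a morphism
$$
\bigoplus{F_Y^e}_*S^{q_e}(\mathcal G)\longrightarrow\mathcal G
$$
that is surjective over $V$, in particular generically surjective.

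Tensoring this morphism with $\mathcal L^n\otimes\mathcal H$ and using the projection formula yields a generically surjective morphism $\bigoplus\big({F_Y^e}_*(S^{q_e}(\mathcal G)\otimes\mathcal H^{p^e})\big)\otimes\mathcal L^n\to\mathcal G\otimes\mathcal L^n\otimes\mathcal H$. If $m=1$ then $q_e=1$, so $S^{q_e}(\mathcal G)=\mathcal G$ does not depend on $e$, and the auxiliary fact applied to $\mathcal F=\mathcal G$ with $c=p^e\ge c_0$ makes the source of this morphism globally generated for $e\gg0$, giving the conclusion. If $m\ge2$ then $q_e\to\infty$; here I would invoke Theorem~\ref{thm:relative_mAd}, in the form established in its proof (which produces $S^{\alpha\beta}(\mathcal G)\otimes\mathcal H^\beta$, not merely its double dual, generically globally generated, and which by Remark~\ref{rem:weak pos} may be taken with respect to the present $\mathcal H$), and apply \cite[Lemma~2.14~b)]{Vie95} with $\alpha=1$ to get that $S^{q_e}(\mathcal G)\otimes\mathcal H^{q_e}$ is generically globally generated for all $e\gg0$. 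Composing
\begin{align*}
\bigoplus\left({F_Y^e}_*\mathcal H^{p^e-q_e}\right)\otimes\mathcal L^n
&\twoheadrightarrow
\bigoplus\left({F_Y^e}_*\big(S^{q_e}(\mathcal G)\otimes\mathcal H^{q_e}\otimes\mathcal H^{p^e-q_e}\big)\right)\otimes\mathcal L^n
\\
&\cong
\bigoplus\left({F_Y^e}_*\big(S^{q_e}(\mathcal G)\otimes\mathcal H^{p^e}\big)\right)\otimes\mathcal L^n
\to\mathcal G\otimes\mathcal L^n\otimes\mathcal H
\end{align*}
(the first arrow generically surjective since $S^{q_e}(\mathcal G)\otimes\mathcal H^{q_e}$ is generically globally generated, the last one from the displayed morphism above), and noting $p^e-q_e\ge(p^e-1)/m\to\infty$, the auxiliary fact with $\mathcal F=\mathcal O_Y$ makes $\big({F_Y^e}_*\mathcal H^{p^e-q_e}\big)\otimes\mathcal L^n$ globally generated for $e\gg0$; hence $\mathcal G\otimes\mathcal L^n\otimes\mathcal H$ is generically globally generated, as desired.

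The step I expect to be the main obstacle is the adaptation of the Popa--Schnell descent (Theorem~\ref{thm:PS-type}) to the weaker hypothesis that $(X,\Delta)$ is only $F$-pure over a dense open subset of $Y$ rather than on the geometric generic fiber: one must check that, with $\bar A$ sufficiently ample, $f_*\phi^{(e)}_{(X,\Delta)}(mK+A+N)$ is genuinely \emph{surjective over the $F$-pure locus} (not merely generically so), and that the relative base-point freeness and $0$-regularity statements used in that proof persist over that locus --- this is exactly where the spreading-out over a dense open of $Y$ (of flatness and of relative nefness) has to be used. A secondary but harmless point is the passage from the double-dual shape of pseudo-effectivity to generic global generation of $S^{q_e}(\mathcal G)\otimes\mathcal H^{q_e}$: generic global generation only tests the open locus where $\mathcal G$ is locally free, where $S^{q_e}(\mathcal G)$ and $(S^{q_e}(\mathcal G/\mathrm{tors}))^{\vee\vee}$ agree.
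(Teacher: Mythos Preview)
Your proposal is correct and follows essentially the same route as the paper's proof: build the descent morphism $\bigoplus{F_Y^e}_*S^{q_e}(\mathcal G)\to\mathcal G$ via the argument of Theorem~\ref{thm:PS-type}, tensor by $\mathcal L^n\otimes\mathcal H$, feed in the pseudo-effectivity from Theorem~\ref{thm:relative_mAd} (together with \cite[Lemma~2.14]{Vie95}), and conclude via the Castelnuovo--Mumford regularity fact from the first paragraph of the proof of Theorem~\ref{thm:relative2}. The concern you flag in your last paragraph is not a genuine obstacle: since the target statement is \emph{generic} global generation, you only need the descent morphism to be generically surjective, and for this it suffices that $(X,\Delta)$ is $F$-pure at the generic point of $Y$ and that $K|_{X_\eta}$, $N|_{X_\eta}$ are nef --- exactly what is assumed --- so no spreading-out of relative nefness is actually required.
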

\begin{rem} \label{rem:nonnormal2}
In this theorem, we do not need to assume that $Y$ is normal.
\end{rem}
\begin{proof}
Set $n:=\dim Y$. Put 
$$
\mathcal G:= f_*\mathcal O_X(m(K_X+\Delta) +A +N). 
$$
Let $q_e$ and $r_e$ be the quotient and the remainder of the division of 
$(m-1)p^e+1$ by $m$. 
Then by the same argument as that of the proof of Theorem~\ref{thm:PS-type}, 
we get the generically surjective morphism
$$
\bigoplus {F_Y^e}_* S^{q_e}(\mathcal G) \to \mathcal G
$$
for each $e\ge1$ with $(p^e-1)(K_X+\Delta)$ is Cartier. 
Taking the tensor product with $\mathcal L^n \otimes \mathcal H$, we obtain
the generically surjective morphism 
$$
\bigoplus \left( {F_Y^e}_*\left(S^{q_e}(\mathcal G) 
\otimes \mathcal H^{p^e}\right) \right) \otimes \mathcal L^n
\to \mathcal G \otimes \mathcal L^n \otimes \mathcal H. 
$$
From here, we apply an argument similar to that of 
the proof of Theorem~\ref{thm:relative2}. 
Note that in the proof of Theorem~\ref{thm:relative_mAd}, we proved that 
for any integer $\alpha\ge 1$, there is an integer $\beta\ge1$ such that 
$
S^{\alpha\beta}(\mathcal G) \otimes \mathcal H^{\beta}
$
is generically globally generated. 
This $\mathcal H$ is different from that in this proof, 
but it is not an issue by \cite[Lemma~2.14~a)]{Vie95}. 
We combine the above generic global generation 
with the proof of Theorem~\ref{thm:relative2}. 
\end{proof}
\begin{thm} \label{thm:subadditivity_mAd} \samepage
Let $X$ be a normal projective variety
and let $\Delta$ be an effective $\mathbb Z_{(p)}$-Weil divisor on $X$. 
Let $Y$ be a normal projective variety 
and let $f:X\to Y$ be a surjective morphism. 
Suppose that 
\begin{itemize}
\item $K_X+\Delta$ is $\mathbb Z_{(p)}$-Cartier, 
\item $K_{X_\eta}+\Delta|_{X_\eta}$ is nef, 
where $X_\eta$ is the generic fiber of $f$, and 
\item $(X,\Delta)$ is $F$-pure over a dense open subset of $Y$. 
\end{itemize}
Let $D$ be a $\mathbb Q$-Cartier divisor on $X$ such that 
$D-(K_X+\Delta)$ is nef. 
Then for every $\mathbb Q$-Cartier divisor $E$ on $Y$, 
$$
\kappa_\sigma(D+f^*E) 
\ge \kappa_\sigma\left(D|_{X_\eta}\right)
+\kappa(E)
$$
and 
$$
\kappa_\sigma(D+f^*E) 
\ge \kappa\left(D|_{X_\eta}\right)
+\kappa_\sigma(E)
$$
In particular, 
$$
\kappa_\sigma(K_X+\Delta) 
\ge \kappa_\sigma\left(K_{X_\eta}+\Delta_{X_\eta}\right). 
$$
\end{thm}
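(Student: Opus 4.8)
The plan is to transcribe the proof of Theorem~\ref{thm:subadditivity} almost verbatim, with two substitutions. First, every occurrence of the geometric generic fibre $X_{\overline\eta}$ is replaced by the generic fibre $X_\eta$: this changes nothing, because for any coherent sheaf $\mathcal F$ on $X$ the $K(Y)$-dimension of $H^0(X_\eta,\mathcal F|_{X_\eta})$ equals the $\overline{K(Y)}$-dimension of $H^0(X_{\overline\eta},\mathcal F|_{X_{\overline\eta}})$, whence $\kappa_\sigma(D|_{X_\eta})=\kappa_\sigma(D|_{X_{\overline\eta}})$, $\kappa(D|_{X_\eta})=\kappa(D|_{X_{\overline\eta}})$, and similarly for $\kappa_\sigma^{\mathbb R}$. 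Second, the generic-global-generation input Theorem~\ref{thm:relative2_generic} is replaced by Theorem~\ref{thm:relative2_mAd}, whose hypotheses ($K_X+\Delta$ being $\mathbb Z_{(p)}$-Cartier, $(K_X+\Delta)|_{X_\eta}$ nef, $(X,\Delta)$ being $F$-pure over a dense open subset of $Y$, and $Y$ being of maximal Albanese dimension) are exactly those at hand; note that $(K_X+\Delta)|_{X_\eta}=K_{X_\eta}+\Delta|_{X_\eta}$, which is nef by assumption. Since Theorem~\ref{thm:relative2_mAd} is phrased with $m(K_X+\Delta)$ rather than the $m(K_{X/Y}+\Delta)$ of Theorem~\ref{thm:relative2_generic}, it dovetails with the hypothesis ``$D-(K_X+\Delta)$ is nef'' of the present statement instead of ``$D-(K_{X/Y}+\Delta)$ is nef'' as in Theorem~\ref{thm:subadditivity}.

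For the first inequality I would argue as in the proof of Theorem~\ref{thm:subadditivity}. Assume $\kappa_\sigma^{\mathbb R}(D|_{X_\eta})\ge 0$, fix $i\ge 1$ with $iD$, $i(K_X+\Delta)$, $iE$ Cartier, use Lemma~\ref{lem:multiple} to pass to $iD$, and pick $\delta<\kappa_\sigma^{\mathbb R}(iD|_{X_\eta})$ so that $\limsup_m h^0(X_\eta,\,imD|_{X_\eta}+B)/m^{\delta}>0$ for some ample Cartier $B$ on $X_\eta$. Choose a sufficiently ample Cartier divisor $A'$ on $X$ with $h^0(X_\eta,\,A'|_{X_\eta}-B)>0$, an ample $L$ on $Y$ with $|L|$ free, and an ample line bundle $\mathcal H=\mathcal O_Y(H)$ on $Y$, and put $A:=A'+f^*((\dim Y)L+H)$. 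Applying Theorem~\ref{thm:relative2_mAd} with $N:=mD-m(K_X+\Delta)$ (nef since $D-(K_X+\Delta)$ is) and using the projection formula shows that $f_*\mathcal O_X(mD+A)$ is generically globally generated for all $m\ge1$ with $i\mid m$; hence there is a generically isomorphic injection $\bigoplus^{r_m}\mathcal O_Y\hookrightarrow f_*\mathcal O_X(mD+A)$ with $r_m=h^0(X_\eta,\,mD|_{X_\eta}+A|_{X_\eta})$, and tensoring with $\mathcal O_Y(mE)$ gives $r_m\cdot h^0(Y,mE)\le h^0(X,\,m(D+f^*E)+A)$. Feeding in $h^0(Y,imE)\ge\gamma m^{\kappa(E)}$ from Lemma~\ref{lem:bound} (after absorbing its $m_0$ into $i$) together with $r_{im}\ge h^0(X_\eta,\,imD|_{X_\eta}+B)$ (as $A|_{X_\eta}=A'|_{X_\eta}\ge B$) yields $\kappa_\sigma^{\mathbb R}(i(D+f^*E))\ge\delta+\kappa(E)$, and then $\kappa_\sigma^{\mathbb R}(D+f^*E)\ge\kappa_\sigma^{\mathbb R}(D|_{X_\eta})+\kappa(E)$ by Lemma~\ref{lem:multiple}; the same computation with $\limsup$ over the non-negative integers gives the $\kappa_\sigma$ form.

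For the second inequality I would replace $mE$ throughout by $mE+C$ for a fixed ample Cartier divisor $C$ on $Y$ chosen with $\limsup_m h^0(Y,mE+C)/m^{\delta}>0$ for some $\delta<\kappa_\sigma^{\mathbb R}(E)$; this produces $\bigoplus^{r_m}\mathcal O_Y(mE+C)\hookrightarrow f_*\mathcal O_X(m(D+f^*E)+A+f^*C)$, hence $h^0(X_\eta,\,mD|_{X_\eta})\cdot h^0(Y,mE+C)\le h^0(X,\,m(D+f^*E)+A+f^*C)$ (using $h^0(X_\eta,\,A|_{X_\eta})>0$), and applying Lemma~\ref{lem:bound} to $D|_{X_\eta}$ on the fibre side then gives $\kappa_\sigma(D+f^*E)\ge\kappa(D|_{X_\eta})+\kappa_\sigma(E)$, just as before. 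The ``in particular'' assertion is then the case $D=K_X+\Delta$, $E=0$, using $(K_X+\Delta)|_{X_\eta}=K_{X_\eta}+\Delta|_{X_\eta}$ and $\kappa(0)=0$.

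I do not expect a genuinely new obstacle here: all of the substantive positivity has already been pushed into Theorem~\ref{thm:relative_mAd} and Theorem~\ref{thm:relative2_mAd}, and what remains is the same elementary cohomology count as in Theorem~\ref{thm:subadditivity}. The only points demanding care are bookkeeping ones --- verifying that the generic fibre rather than the geometric generic fibre is the right object (so that the weaker hypothesis ``$F$-pure over a dense open subset of $Y$'' is precisely what Theorem~\ref{thm:relative2_mAd} consumes), and keeping the nef hypothesis on $D-(K_X+\Delta)$ so that it matches the $K_X$, not $K_{X/Y}$, that appears there.
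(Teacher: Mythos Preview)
Your proposal is correct and follows exactly the approach the paper takes: its proof is the single sentence ``This theorem is proved by an argument similar to that of the proof of Theorem~\ref{thm:subadditivity}, by using Theorem~\ref{thm:relative2_mAd} instead of Theorem~\ref{thm:relative2_generic}.'' Your write-up is a faithful expansion of that sentence, including the bookkeeping point that $m(K_X+\Delta)$ (not $m(K_{X/Y}+\Delta)$) appears in Theorem~\ref{thm:relative2_mAd}, matching the nef hypothesis on $D-(K_X+\Delta)$; note only that the maximal Albanese dimension hypothesis on $Y$ you invoke is implicit from the section context rather than stated in the theorem itself.
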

\begin{proof}
This theorem is proved by an argument similar to that of the proof of 
Theorem~\ref{thm:subadditivity}, by using Theorem~\ref{thm:relative2_mAd} 
instead of Theorem~\ref{thm:relative2_generic}. 
\end{proof}
\bibliographystyle{abbrv}
\bibliography{ref.bib}
\end{document}